\theoremstyle{plain}
\newtheorem{theorem}{Theorem}
\newtheorem{lemma}[theorem]{Lemma}
\newtheorem{conjecture}[theorem]{Conjecture}
\newtheorem{corollary}[theorem]{Corollary}
\newtheorem{proposition}[theorem]{Proposition}
\theoremstyle{definition}
\newtheorem{definition}[theorem]{Definition}
\newtheorem{remark}[theorem]{Remark}
\newtheorem{example}[theorem]{Example}
\newtheorem{question}[theorem]{Question}
\newcommand{\old}[1]{}
\title{ On a positivity conjecture in the character table of $S_n$}
\author{Sheila Sundaram}
\address{Pierrepont School, One Sylvan Road North, Westport, CT 06880}
\email{shsund@comcast.net}
\date{20 December 2018}
\subjclass[2010]{20C05, 20C15, 20C30, 05E18, 06A07}
\begin{document}
\begin{abstract} 
In previous work of this author it was conjectured that the sum of power sums $p_\lambda,$ for partitions $\lambda$ ranging over an interval $[(1^n), \mu]$ in reverse lexicographic order, is Schur-positive.  
Here we investigate this conjecture and establish its truth in the following special cases:
  for $\mu\in [(n-4,1^4), (n)]$  or $\mu\in [(1^n), (3,1^{n-3})], $ or $\mu=(3, 2^k, 1^r)$ when $k\geq 1$ and $0\leq r\leq 2.$  Many new Schur positivity questions are presented.

\noindent
\emph{Keywords:}  conjugacy action, character table, Schur positivity, power sum symmetric functions
\end{abstract}
\maketitle


\section{Introduction and Preliminaries}
 
 In this paper we consider Schur positivity questions related to  
  the reverse lexicographic order on integer partitions.  Recall that this total order is defined as follows \cite[p. 6]{M}.  For partitions $\lambda, \mu$ of the same integer $n,$ we say a partition $\lambda$ is preceded by a partition $\mu$ in reverse lexicographic order  if $\lambda_1>\mu_1$ or there is an index $j\geq 2$ such that $\lambda_i=\mu_i$ for $i<j$ and $\lambda_j>\mu_j.$  Thus for $n=4$ we have the total order 
$(1^4) < (2,1^2)< (2^2) <(3,1) <(4).$ In particular our convention  is that the minimal and maximal elements in this total order are $(1^n)$ and $(n)$   respectively.  Our primary goal is to  address the following conjecture:

\begin{conjecture}\label{conj1}\cite[Conjecture 1]{Su1} Let $L_n$ denote the reverse lexicographic ordering on the set of partitions of $n.$ Then the sum of power sum symmetric functions $\sum p_\lambda,$ taken over any initial segment of the total order $L_n,$
i.e. any interval of the form $[ (1^n), \mu]$ for fixed $\mu,$  (and thus  necessarily including  the partition $(1^n)$), is Schur-positive. 
\end{conjecture}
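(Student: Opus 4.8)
The plan is to convert the statement into a family of character inequalities and then exploit the block structure of the reverse lexicographic order. Expanding each power sum in the Schur basis via $p_\lambda = \sum_{\nu \vdash n} \chi^\nu_\lambda\, s_\nu$, where $\chi^\nu_\lambda$ denotes the value of the irreducible character $\chi^\nu$ on the class of cycle type $\lambda$, one gets
\[
\sum_{\lambda \le \mu} p_\lambda = \sum_{\nu \vdash n}\Bigl(\sum_{\lambda \le \mu}\chi^\nu_\lambda\Bigr) s_\nu,
\]
the inner sum ranging over all partitions $\lambda$ with $\lambda \le \mu$ in $L_n$. Thus Schur-positivity of the initial segment is \emph{equivalent} to nonnegativity of every truncated row sum $\sum_{\lambda \le \mu}\chi^\nu_\lambda$ of the character table, with columns ordered by $L_n$. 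First I would record the endpoint $\mu=(n)$: the conjugation action of $S_n$ on itself is a permutation module whose character takes the value $z_\lambda$ on the class $\lambda$, so its Frobenius characteristic is $\sum_\lambda \tfrac{z_\lambda}{z_\lambda}p_\lambda = \sum_{\lambda\vdash n} p_\lambda$, and the resulting coefficients $\sum_\lambda \chi^\nu_\lambda$ are genuine multiplicities, hence nonnegative. This anchors the argument at the maximal interval.

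Next I would stratify $L_n$ by the largest part. Fixing $\lambda_1=k$, the partitions with that value of $\lambda_1$ form a contiguous block of $L_n$, and within that block the order agrees with $L_{n-k}$ on the remaining parts (those of size $\le k$). The block-boundary sums $\sum_{\lambda_1 \le k}p_\lambda$ are precisely the degree-$n$ components of $\prod_{i=1}^k (1-p_i)^{-1}$, the case $k=n$ recovering the conjugation characteristic. I would try to establish Schur-positivity of these boundary sums by exhibiting an actual module (for instance via characters induced from centralizers or from Young subgroups), and then attack a general interval $[(1^n),\mu]$ by a secondary induction that sweeps through a single block, adjoining one $p_\lambda$ at a time and controlling the Schur expansion recursively in the tail of size $n-k$.

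For the families in the abstract this strategy becomes concrete. Near the top, $\mu \in [(n-4,1^4),(n)]$, the complement $\{\lambda:\lambda>\mu\}$ is a bounded list of partitions independent of $n$ (those with $\lambda_1>n-4$, together with the few partitions of $4$ appended to $(n-4)$), so I would write $\sum_{\lambda \le \mu} p_\lambda = \sum_{\lambda\vdash n} p_\lambda - \sum_{\lambda > \mu} p_\lambda$ and verify, after expanding the finitely many subtracted terms by the Murnaghan--Nakayama rule, that the multiplicities in the conjugation characteristic dominate those of the subtracted power sums. Near the bottom, $\mu \in [(1^n),(3,1^{n-3})]$, every relevant $\lambda$ uses only the parts $1,2,3$, so the sums collapse to explicit combinations of $p_1^{a}p_2^{b}p_3^{c}$ whose Schur expansions can be computed outright; the family $\mu=(3,2^k,1^r)$ is the analogous structured intermediate case.

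The hard part will be the within-block step. Individual power sums $p_\lambda$ are emphatically not Schur-positive, so the truth of the conjecture rests entirely on cancellation that is sensitive to the precise order, and for an arbitrary initial segment there is no evident representation whose Frobenius characteristic equals $\sum_{\lambda\le\mu} p_\lambda$. Consequently the inductive sweep through a block cannot be reduced to a single positivity statement; it must be controlled either by explicit Murnaghan--Nakayama cancellation or by identifying the correct Schur-positive reference functions against which the partial sums can be compared. Securing such control uniformly in $n$ is the crux, and is presumably why only the extremal intervals and a few structured families are settled here.
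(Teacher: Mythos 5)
You should note at the outset that this statement is a \emph{conjecture}: the paper does not prove it, but only verifies it by machine up to $n=20$ and establishes the special cases collected in Theorem~\ref{thm1.3}. So the honest comparison is between your outline and the paper's partial results, and at the level of architecture you have reproduced the paper's approach exactly: the reformulation as nonnegativity of truncated row sums of the character table is the paper's own equivalent formulation of Conjecture~\ref{conj1}; the anchor at $\mu=(n)$ is Solomon's observation (Theorem~\ref{thm1.1}); and your two-pronged attack --- subtract finitely many $p_\lambda$ from $\psi_n$ at the top of the chain, compute explicitly with parts at most $3$ at the bottom --- is precisely the split between Theorem~\ref{thm2.16} and Theorem~\ref{thm2.11}.

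However, for both prongs you stop short of the ideas that actually carry the paper's proofs, and this is where the genuine gaps lie. In the top-down direction, ``verify that the multiplicities in the conjugation characteristic dominate'' requires a uniform lower bound on the multiplicity of every irreducible in $\psi_n$; the paper obtains this in Lemma~\ref{lem2.6} (multiplicity at least $4$ for odd $n$, at least $3$ for even $n$, plus the number of global classes) from inputs your plan never identifies --- Frumkin's theorem, the classification of global conjugacy classes (Theorem~\ref{thm2.2}), and Swanson's theorem on the class of $n$-cycles (Theorem~\ref{thm2.3}). Moreover, the domination argument only works because the subtracted partial sums in Proposition~\ref{prop2.15} are put in \emph{reduced} form, each Schur function appearing exactly once with a coefficient of absolute value at most $4$; Murnaghan--Nakayama alone does not deliver this, and the collapsing and recombination of the hook and double-hook sums is where most of the labor sits. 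In the bottom-up direction your claim that the sums ``can be computed outright'' is too optimistic: writing $p_3=2s_{(3)}+s_{(1^3)}-h_2p_1$ introduces negative terms, and the proof of Theorem~\ref{thm2.11} hinges on absorbing them through a delicate grouping built on the identity $p_2^2+h_2e_2=h_2^2-e_2h_2+e_2^2$ of Lemma~\ref{lem2.8}, which the author herself calls ``something of a miracle.'' Finally, the one genuinely new element of your plan --- using the block-boundary sums $\sum_{\lambda_1\le k}p_\lambda$ as Schur-positive reference points --- rests on a positivity statement that is itself not established (the paper covers only the case of parts in $\{1,2\}$, Theorem~\ref{thm2.10}), so your within-block induction would be anchored to a claim apparently no easier than the conjecture it is meant to prove.
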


In general, for arbitrary subsets $T$ of partitions of $n$ with $(1^n)\in T,$ the sums $\sum_{\mu\in T} p_\mu$ define (possibly virtual) representations of the symmetric group $S_n,$ of dimension $n!$
There are many instances where Schur positivity fails;  
see the remarks following Example~\ref{ex1.4}. Proposition~\ref{prop4.1} in Section 4 gives  a lower bound for the number of failures.

Conjecture 1 has an equivalent formulation in terms of the character table of  $S_n.$ If the columns of the table are indexed by the integer partitions of $n$ corresponding to the conjugacy classes, in reverse lexicographic order, left to right, and the rows by the irreducible characters (hence also corresponding to partitions), then the conjecture states that, for each row, indexed by some fixed partition $\lambda$ of $n$, the sum of the entries in the first $k$ consecutive columns, beginning with the column indexed by $(1^n)$, is a nonnegative integer.  If the $k$th column corresponds to the conjugacy class indexed  by the partition $\mu,$  this row sum is the multiplicity of the Schur function $s_\lambda$ in the sum $\sum_{\nu\in[(1^n), \mu]} p_\nu.$

The questions treated in this paper are intimately connected to (and indeed motivated by) the representation theory of the symmetric group and the general linear group.  For background on these topics we refer the reader to \cite{M} and \cite{St4EC2}, in particular the Exercises in the latter reference.

Let $\psi_n$ denote the Frobenius characteristic of the conjugacy action on $S_n.$  The orbits of this action are the conjugacy classes. Let $f_n$ denote the Frobenius characteristic of the conjugacy action of $S_n$ on the class of $n$-cycles.
Let $h_n, e_n$ denote respectively the homogeneous and elementary symmetric functions of degree $n,$ and let $[\quad  ]$ denote plethysm. By a general observation  of Solomon \cite{So} for finite groups (see also \cite[Exercise 7.71]{St4EC2}, \cite[Corollary 4.3]{Su1}), we have the following facts.  
In view of Part (2) of the theorem below, Conjecture 1 may be seen as a generalisation of the Schur positivity of  the sum of all power sums $\sum_{\lambda\vdash n} p_\lambda.$

\begin{theorem}\label{thm1.1} The Frobenius characteristic $\psi_n$ of the conjugacy action of $S_n$ admits the following decompositions:
\begin{enumerate}[label=(\arabic*)]
\item[(1)] 
$\psi_n=\sum_{\lambda\vdash n} \prod_i h_{m_i}[f_i],$  where the partition $\lambda$ has $m_i$ parts equal to $i.$ 
\item[(2)] $\psi_n=\sum_{\lambda\vdash n} p_\lambda,$ and hence the latter sum is Schur-positive.
\end{enumerate}
\end{theorem}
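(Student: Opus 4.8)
The plan is to treat the two parts somewhat separately, deriving (2) directly from character theory and (1) from the orbit structure of the conjugacy action. The common starting point is the decomposition of the conjugacy action into orbits: each conjugacy class of cycle type $\lambda$ is a single orbit isomorphic to the coset space $S_n/Z_\lambda$, where $Z_\lambda$ is the centraliser of a fixed element of cycle type $\lambda$. Hence the permutation module decomposes as a direct sum of the modules $\mathrm{Ind}_{Z_\lambda}^{S_n}\mathbf{1}$ over $\lambda\vdash n$, and taking Frobenius characteristics gives $\psi_n=\sum_{\lambda\vdash n}\mathrm{ch}\bigl(\mathrm{Ind}_{Z_\lambda}^{S_n}\mathbf{1}\bigr)$.

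For part (2) I would bypass the centralisers and compute the permutation character $\chi$ of the conjugacy action directly. For $\sigma\in S_n$, the number of fixed points of $\sigma$ acting by conjugation is exactly the number of elements commuting with $\sigma$, that is, $|Z_{S_n}(\sigma)|$. If $\sigma$ has cycle type $\mu$ this equals $z_\mu=\prod_i i^{m_i}m_i!$. Applying the standard formula $\mathrm{ch}(\chi)=\sum_{\mu\vdash n}z_\mu^{-1}\chi(\mu)\,p_\mu$ and substituting $\chi(\mu)=z_\mu$ collapses every coefficient to $1$, yielding $\psi_n=\sum_{\mu\vdash n}p_\mu$. Schur positivity is then automatic: $\psi_n$ is the Frobenius characteristic of an honest (non-virtual) permutation representation, so its expansion in the Schur basis has nonnegative integer coefficients.

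For part (1) I would return to the orbit decomposition and analyse the centraliser. For an element of cycle type $\lambda$ with $m_i$ parts equal to $i$, the centraliser splits as a direct product $Z_\lambda\cong\prod_i(C_i\wr S_{m_i})$, where $C_i$ is the cyclic group generated by a single $i$-cycle and $S_{m_i}$ permutes the $m_i$ cycles of length $i$ among themselves. Multiplicativity of the Frobenius characteristic under induction from a product of symmetric groups inside $S_n$ then reduces the computation of $\mathrm{ch}\bigl(\mathrm{Ind}_{Z_\lambda}^{S_n}\mathbf{1}\bigr)$ to a product over $i$ of the characteristics of $\mathrm{Ind}_{C_i\wr S_{m_i}}^{S_{im_i}}\mathbf{1}$. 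It remains to identify each such factor as the plethysm $h_{m_i}[f_i]$, using that $f_i=\mathrm{ch}\bigl(\mathrm{Ind}_{C_i}^{S_i}\mathbf{1}\bigr)$ is precisely the Frobenius characteristic of the conjugacy action on the $i$-cycles.

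The main obstacle is this last identification, namely the wreath-product plethysm formula $\mathrm{ch}\bigl(\mathrm{Ind}_{G\wr S_m}^{S_{dm}}\mathbf{1}\bigr)=h_m\bigl[\mathrm{ch}(\mathrm{Ind}_G^{S_d}\mathbf{1})\bigr]$ for $G\subseteq S_d$. This is the genuinely representation-theoretic input, and I would establish it through the standard theory of wreath products, in which plethysm by $h_m$ encodes the trivial $S_m$-action symmetrising $m$ copies of the base representation. Everything else is a bookkeeping assembly of these factors over the parts of $\lambda$. Granting the plethysm formula, parts (1) and (2) are two expansions of the same symmetric function $\psi_n$, and their agreement could be cross-checked by expanding each $h_{m_i}[f_i]$ in the power-sum basis.
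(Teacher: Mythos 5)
Your proposal is correct, and it is essentially the argument behind the result as the paper uses it: Theorem~\ref{thm1.1} is stated there without proof, being quoted from Solomon \cite{So} (see also \cite[Exercise 7.71]{St4EC2} and \cite[Corollary 4.3]{Su1}), and those sources proceed exactly as you do. Your part (2) — the fixed-point count $\chi(\mu)=|Z_{S_n}(\sigma)|=z_\mu$ cancelling against $z_\mu^{-1}$ in the characteristic map — is Solomon's observation, and your part (1) — orbit decomposition into $\mathrm{Ind}_{Z_\lambda}^{S_n}\mathbf{1}$, the centraliser factorisation $Z_\lambda\cong\prod_i(C_i\wr S_{m_i})$, and the wreath-product plethysm identity giving the factors $h_{m_i}[f_i]$ — is the standard derivation in the cited references, so nothing is missing beyond the textbook plethysm lemma you correctly flag as the one real input.
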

 

\begin{definition}\label{def1.2} If $\mu$ is a partition of $n,$ we write $\psi_\mu$ for the sum of power sums 
\begin{center}$\sum_{\lambda\in[(1^n), \mu]} p_\lambda.$\end{center}

More generally if $T$ is any subset of partitions of $n,$ define $\psi_T$ to be the sum $\sum_{\mu\in T} p_\mu.$
\end{definition}
Thus $\psi_{(n)}=\psi_n,$ and  the multiplicity of the Schur function $s_\lambda$ in $\psi_\mu$ is the sum of the values of the irreducible character $\chi^\lambda$ on the conjugacy classes in the interval $[(1^n), \mu].$  

Clearly $\psi_{(1^n)}$ is just the characteristic of the regular representation.  Also since $\psi_2=2 s_{(2)}$ is twice the trivial representation, $\psi_{(2,1^{n-2})}=p_1^{n-2}\psi_{(2)}=2 s_{(2)} p_1^{n-2}.$  
The Schur function expansion of  $\psi_n$ for $n\leq 10 $ appears in \cite[Table 1]{Su1}.   We have verified Conjecture 1 in Maple up to $n=20.$ 

\vskip .1in

The main result of this paper gives an affirmative answer to Conjecture 1 in the following cases:

\begin{theorem}\label{thm1.3}  The symmetric function $\psi_\mu=\sum_{(1^n)\leq \lambda\leq \mu} p_\lambda$ is Schur-positive if  $\mu\leq (3, 1^{n-3})$ or $\mu\geq (n-4,1^4)$ in reverse lexicographic order.
\end{theorem}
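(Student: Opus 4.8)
The plan is to split Theorem~\ref{thm1.3} into two genuinely different regimes: the "bottom" interval $\mu \le (3,1^{n-3})$ and the "top" interval $\mu \ge (n-4,1^4)$.

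Let me think about the bottom interval first. The partitions $\lambda \le (3,1^{n-3})$ in reverse lex are exactly those with $\lambda_1 \le 3$... wait, need to be careful. Reverse lex: $\lambda < \mu$ if $\lambda_1 > \mu_1$ (larger first part = smaller in this order). So $\mu = (3,1^{n-3})$ is near the bottom. The partitions $\le (3,1^{n-3})$ are: first, everything with $\lambda_1 \ge 4$ (these all come before), then everything with $\lambda_1 = 3$ that is $\le (3,1^{n-3})$, i.e. $(3,1^{n-3})$ itself and anything with $\lambda_1=3, \lambda_2 > 1$? No — $(3,1^{n-3})$ is the SMALLEST partition with first part 3, since among first-part-3 partitions, $(3,1^{n-3})$ has the smallest... reverse lex again: among $\lambda_1=3$, smaller second part means... $\lambda < \mu$ needs $\lambda_2 > \mu_2$. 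So $(3,1^{n-3})$ with $\lambda_2=1$ is the smallest second part, hence largest in reverse lex among $\lambda_1=3$? This is getting confusing. Let me recompute with $n=4$: order is $(1^4)<(2,1^2)<(2^2)<(3,1)<(4)$. So $(3,1)=(3,1^{n-3})$ is the second-largest. So $\mu \le (3,1^{n-3})$ means $\mu \in \{(1^4),(2,1^2),(2^2),(3,1)\}$, i.e. everything except $(4)=(n)$.

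So the bottom interval consists of all partitions with $\lambda_1 \le 3$ — no: for $n=4$, $(4)$ excluded, rest have $\lambda_1 \le 3$. Let me just proceed conceptually.

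For the bottom interval, the partitions involved have small first part ($\le 3$). The key should be an explicit recursive or product formula. Note $\psi_{(2,1^{n-2})} = 2s_{(2)}p_1^{n-2}$ is given. The plan is to compute $\psi_\mu$ for $\mu$ up to $(3,1^{n-3})$ by grouping power sums by first part: partitions with $\lambda_1=1$ give $p_1^n$; $\lambda_1=2$ contribute $p_2 \cdot (\text{power sums of } n-2)$; $\lambda_1=3$ up to the cutoff contribute $p_3 \cdot p_1^{n-3}$. Sum these and show Schur-positivity, likely via Pieri/plethysm identities and the known positivity of smaller $\psi$'s.

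For the top interval $\mu \ge (n-4,1^4)$, the complementary idea is cleaner: $\psi_{(n)} = \sum_{\lambda \vdash n} p_\lambda$ is Schur-positive by Theorem~\ref{thm1.1}(2), and $\psi_\mu = \psi_{(n)} - \sum_{\lambda > \mu} p_\lambda$. So I need the complementary sum $\sum_{\lambda > \mu} p_\lambda$ — the power sums indexed by the top few partitions (those with large first part: $\lambda_1 \ge n-4$, roughly) — to be "small enough" that subtracting it preserves positivity, or better, to directly compute $\psi_\mu$ in each of the finitely many top cases. Since $\mu \ge (n-4,1^4)$ carves out a bounded-size family near the top (partitions with $\lambda_1 \ge n-4$), for each such $\mu$ I would get an explicit formula for $\sum_{\lambda \ge (n-4,1^4)} p_\lambda$ or its complement, expand using $p_k = \sum_\lambda \chi^\lambda_{(k,1^{n-k})}\cdots$ — more practically, use the Murnaghan–Nakayama rule to expand each $p_\lambda$ with a long first part, and check the sign of every Schur coefficient.

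**The main obstacle** I expect is the top interval: there I cannot rely on any previously-established $\psi_\nu$ being positive, so I must control an honest signed sum of Schur functions. The cleanest route is Murnaghan–Nakayama: for partitions $\lambda$ with $\lambda_1$ close to $n$, $p_\lambda = p_{\lambda_1} p_{\lambda'}$ where $p_{\lambda_1}$ adds a single long border strip, producing signs $(-1)^{\text{height}}$. I would organize the top partitions by their first part $\lambda_1 \in \{n,n-1,n-2,n-3,n-4\}$ and, for each Schur function $s_\tau$ appearing, collect the signed contributions and prove the total is $\ge 0$. The delicate part is the cancellation between positive and negative border-strip contributions across the five values of $\lambda_1$; I expect to need the Schur-positivity of $\psi_n$ itself (Theorem~\ref{thm1.1}(2)) as the anchor and to show the subtracted tail $\sum_{\lambda > \mu}p_\lambda$ never overshoots any coefficient of $\psi_n$, handled by a uniform bound together with a finite base-case check for small $n$.
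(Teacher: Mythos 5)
Your high-level architecture is the same as the paper's (bottom-up for $\mu\le(3,1^{n-3})$, top-down subtraction from $\psi_n$ for $\mu\ge(n-4,1^4)$), and your decomposition of the bottom interval is correct: it consists of the partitions $(2^k,1^{n-2k})$ together with $(3,1^{n-3})$ itself, so $\psi_{(3,1^{n-3})}=\sum_{i}p_2^ip_1^{n-2i}+p_3p_1^{n-3}$. But in both halves the step that actually carries the proof is missing. For the bottom half, positivity of $\sum_{i=0}^{k}p_2^ip_1^{n-2i}=p_1^{n-2k}\,p_{2k,T}$ does follow from the known Theorem~\ref{thm2.10}; the genuine difficulty is the final increment, because $p_3p_1^{n-3}$ is \emph{not} Schur-positive: $p_3p_1^{n-3}=(2s_{(3)}+s_{(1^3)})p_1^{n-3}-h_2p_1^{n-2}$, so "sum these and apply Pieri/positivity of smaller $\psi$'s" cannot close the argument — the negative part $-h_2p_1^{n-2}$ must be absorbed by the earlier partial sum. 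The paper does this by proving the strictly stronger claim that $\psi_{(2^m,1^{n-2m})}-h_2p_1^{n-2}$ is Schur-positive (eq.~\eqref{eq2.1}), which requires expanding everything in the $h_2,e_2$ basis via Theorem~\ref{thm2.10} and regrouping all negated terms into blocks of the form $h_2^2-e_2h_2+e_2^2=s_{(4)}+s_{(1^4)}+2s_{(2^2)}$ (Lemma~\ref{lem2.8}), with separate endgames according to the parity of $m$. Your proposal contains no mechanism playing this role.

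For the top half, your eventual criterion — the subtracted tail must never overshoot any coefficient of $\psi_n$ — is exactly the paper's, but Schur-positivity of $\psi_n$ (Theorem~\ref{thm1.1}) is far too weak an anchor, and the "uniform bound" you defer to is where most of the work lies. One needs, on one side, a quantitative lower bound: every irreducible other than the sign occurs in $\psi_n$ with multiplicity at least $4$ (at least $5$ for odd $n\ge 7$). This is Lemma~\ref{lem2.6}, and its proof rests on substantial inputs — Swanson's theorem on the conjugacy action on $n$-cycles (Theorem~\ref{thm2.3}), the classification of global classes (Theorem~\ref{thm2.2}), and Lemma~\ref{lem2.5} — none of which your sketch supplies or replaces. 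On the other side one needs the matching upper bound of $4$ on the tail's coefficients, which forces the Murnaghan--Nakayama expansions to be organised in \emph{reduced} form, with no hidden collapsing of equal terms (Lemmas~\ref{lem2.12}--\ref{lem2.14}, Proposition~\ref{prop2.15}); the trivial and sign representations must then be treated separately, their multiplicities in $\psi_n$ being $p(n)$ and the number of self-conjugate partitions respectively (Proposition~\ref{prop2.4}). Finally, the numbers are tight, not crude: already for $n=8,9,10$ the sum $\sum_{\lambda\ge(n-4,1^4)}p_\lambda$ contains coefficients $-5$ and $-6$, so any lossy uniform estimate would fail, and this tightness is precisely why the theorem stops at $(n-4,1^4)$; a finite base-case check for small $n$ cannot substitute for bounds that must hold for all $n$.
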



Our approach to Conjecture 1 proceeds in two directions.
One can start at the bottom of the chain,  with $p_1^n$ (which  contains all irreducibles), and add successive $p_\lambda$'s going up  the chain.  The arguments in this case are subtle, and give an interesting decomposition of the corresponding representation.  See Theorem~\ref{thm2.11}. Alternatively, one can start at the top of the chain, with the known Schur-positive function $\psi_n,$ which is also known to contain all irreducibles (see Section 2), and examine what happens to the irreducibles upon subtracting  successive $p_\lambda$'s going down  the chain, from $\psi_n.$ This is done in Theorem~\ref{thm2.16}, and requires a careful analysis (Lemmas~\ref{lem2.12} to~\ref{lem2.14}) of the Schur functions appearing in products of power sums. 
The technical difficulty here is in ensuring that the resulting expressions (Proposition~\ref{prop2.15}) are \textit{reduced}, i.e. each term corresponds to a unique Schur function.  
 The argument now hinges on the following fact:  no irreducible in the partial sum of power sums appears with multiplicity exceeding the   lower bound, established in Lemma~\ref{lem2.6}, for the multiplicity of each irreducible in $\psi_n$.  

The proof of Theorem~\ref{thm2.11} hints at interesting properties of the representations $\psi_{(2^k)}.$ In Section 3 we present  conjectures suggested by that proof, and establish more Schur positivity results (the case $\mu=(3,2^k,1^r)$ for $0\leq r\leq 2$, Proposition~\ref{prop3.7}), as well as  generalisations of Theorem~\ref{thm2.16} to the twisted conjugacy action as defined in \cite{Su1}.   Section 4 concludes the paper with an analysis of the number of subsets of partitions whose associated sum of power sums is \textit{not} Schur-positive.  Tables of the Schur expansion of $\psi_\mu$ appear in Section 5.

An interesting implication  of Conjecture 1 is obtained by  examining the occurrence of the sign representation in $\psi_\mu.$ 
The multiplicity of $s_{(1^n)}$ in $\psi_\mu$  is clearly
\begin{equation}\label{1.1}\sum_{(1^n)\leq \lambda\leq \mu} (-1)^{n-\ell(\lambda)}, \end{equation}
where $\ell(\lambda)$ denotes the number of parts of $\lambda,$ 
because the value of the sign character on the conjugacy class indexed by 
 $\lambda$ is $(-1)^{n-\ell(\lambda)}.$
 In our reverse lexicographic ordering, these values are the partial sums (computed left to right, with the left-most column indexing the class of the identity $(1^n)$ ) in the first row of the character table.
Hence Conjecture 1 implies that the sum~\eqref{1.1} is nonnegative for all $\mu\vdash n.$ 
When $\psi_\mu=\psi_n$ is the characteristic of the full conjugacy action of $S_n,$   the expression~\eqref{1.1}  can be shown to equal 
 the number of partitions of $n$ all of whose parts are odd and distinct, or equivalently, the number of self-conjugate partitions of $n$  \cite[Proposition 4.21]{Su1}.  In general the character values form a sequence of 1's and $(-1)$'s, with the partitions  written  in reverse lexicographic order; it is not  obvious why the resulting partial sums should be nonnegative.   The nonnegativity is easily  checked for $n\leq 7.$ 
 The  example below contains  data for $8\leq n\leq 13.$  As observed above, the last partial sum in each case is the number of partitions with all parts odd and distinct.  Theorem~\ref{thm2.16} will imply (see Corollary~\ref{cor2.17}) that at least the last five partial sums are necessarily  positive.

\begin{example}\label{ex1.4}  (See the discussion following Corollary~\ref{cor2.17} for the meaning of the italics and the underlined runs.)
The  22 values of the sign character for $S_8,$ on the conjugacy classes taken in reverse lexicographic order, beginning with 
$(1^n),$ are 

${\bf 1, -1, 1, -1, 1, 1, -1, 1, 1, -1, -1, 1, -1, -1, 1, 1, -1, 1, -1, 1, 1, -1},$ 
$$\text{\small with  partial sums: }{\bf 1, 0, 1, \underline{0, 1, 2}, \underline{1, 2, {\it 3},} {\it 2, 1}, {\it 2, 1}, \underline{{\it 0}, 1, 2}, 1, 2, \underline{1, 2, 3}, 2};$$
the 30 values for $S_9$ are 

${\scriptstyle \bf 1, -1, 1, -1, 1, 1, -1, 1, -1, 1, -1, 1, -1, 1, -1, -1,  \underline{1, 1, 1}, -1, 1, 1, -1, -1, 1, -1, 1, -1, -1, 1}, $

\noindent  with partial sums:
$${ \bf 1, 0, 1, \underline{0, 1, 2}, 1, 2, 1, 2, 1, 2, 1, {\it 2, 1},\, \underline{{\it 0}, 1, 2, 3}, 2, 3, 4, 3, 2, 3, 2, 3, 2, 1, 2
};$$
and the 42 values for $S_{10}$ are 
$${\scriptstyle \bf 
1, -1, 1, -1, 1, -1, 1, -1, 1, -1, 1, -1, 1, 1, -1, 1, -1, 1, -1, 1, -1, 1, -1, 1, -1, \underline{1, 1}, {\it -1, -1}, 1, -1, 1, -1, -1, \underline{1, 1}, -1, 1, -1, \underline{1, 1}, -1
}, $$
with sequence of partial sums:

\noindent
\textbf{\tiny
1, 0, 1, 0, 1, 0, 1, 0, 1, 0, 1, \underline{0, 1, 2}, 1, 2, 1,
   2, 1, 2, 1, 2, 1, 2, \underline{1, 2, {\it 3}}, {\it 2, 1}, 2, 1, 2, 1, 0, 1, 2, 1, 2, \underline{1,  2, 3}, 2.}
  
  The partial sums for $S_{11}$ are:
  
  \noindent
 \textbf{\tiny 1, 0, 1, 0, 1, 0, 1, 0, 1, \underline{0, 1, 2}, \underline{1, 2, {\it 3}}, {\it 2, 1},
   2, 1, 2, 1, {\it 2, 1, 0}, 1, \underline{0, 1, 2}, 1, 2, 1, 2, 1, 2, \underline{1, 2, 3}, 2, 3,
 2, \underline{1, 2, 3}, 2, 3, \underline{2, 3, {\it 4}}, {\it 3, 2}, 3, 2, {\it 3, 2, 1}, 2}
 
 The partial sums for $S_{12}$ are:
  
  \noindent
 \textbf{\tiny 1, 0, 1, 0, 1, \underline{0, 1, 2}, 1, 2, \underline{1, 2, 3}, 
 2, 3, 2, 3,
    2, 3, 2, 3, 2, {\it 3, 2, 1}, {\it 2, 1, 0}, 1, 2, \underline{1, 2, 3}, 2, 3, 2, 3, 2, 3,
    \underline{2, 3, 4}, 3, 4, 3, {\it 4, 3, 2}, 3, 2, 3, 2, 3, 2, {\it 3, 2, 1}, 2, 3, \underline{2, 3, {\it 4}}, {\it 3, 2}, 3, 2, 
{\it 3, 2}, \underline{{\it 1}, 2, 3}, 2, 3, \underline{2, 3, 4}, 3 }
 
 The partial sums for $S_{13}$ are:
  
  \noindent
 \textbf{\tiny 1, 0, 1, 0, 1, \underline{0, 1, 2}, 1, 2, 1, 2, 1, 2, 1, 2, 1,
    2, \underline{1, 2, 3}, 2, 3, 2, {\it 3, 2, 1}, 2, 1, 2, 1, 2, 1, 2, \underline{1, 2, {\it 3}}, {\it 2, 1},
    2, 1, 2, \underline{1, 2, 3}, \underline{2, 3, {\it 4}}, {\it 3, 2}, {\it 3, 2}, \underline{{\it 1}, 2, 3}, 2, 3, 2, 3, 2, 3,
    2, {\it 3, 2, 1}, 2, 1, 2, \underline{1, 2, 3, 4}, 3, 4, 3, 4, 3, 4, \underline{3, 4, {\it 5}}, {\it 4, 3},
    4, 3, \underline{2, 3, 4}, 3, 4, \underline{3, 4, {\it 5}}, {\it 4, 3}, 4, 3, {\it 4, 3, 2}, 3
 }

\end{example}

In addition to the sign, it is also interesting to examine the multiplicity of the representation $(2, 1^{n-2})$. 
\begin{example}\label{ex1.5} 
The values of the irreducible character indexed by $(2,1^{n-2})$ on the conjugacy classes in reverse lexicographic order are as follows.

For $n=7:$ 
$${\bf 6,-4,2,0,3,-1,-1,0,-2,0,1,1,1,0,-1,}$$
with partial sums: ${\bf 6,2,4,4,7,6,5,5,3,3,4,5,6,6,5}.$

For $n=8:$ 
$${\bf 7, -5, 3,-1,-1,4,-2,0,1,1,-3,1,1,0,-1,2,0,-1,-1,-1, 0,1 }$$
with partial sums: 
${\bf 7,2,5,4,3,7,5,5,6,7,4,5,6,6,5,7,7,6,5,4,4,5}.$  

These examples also highlight the fact that there are many ways of reordering the conjugacy classes so that the resulting partial (row) sums in the character table may be negative, and the corresponding sum of power sums will thus fail to be Schur-positive.  We will return to this observation in Section 4.
\end{example}
%
\begin{center}
\begin{tikzpicture}[scale=.7]
  \node (max) at (0,8) {$\scriptstyle(6)$};
  \node (a) at (0,7) {$\scriptstyle(5,1)$};
  \node (b) at (0,6) {$\scriptstyle(4,2)$};
  \node (c1) at (-1,5) {$\scriptstyle(3^2)$};
  \node (c2) at (1,5) {$\scriptstyle(4,1^2)$};
  \node (d) at (0,4) {$\scriptstyle(3,2,1)$};
  \node (e1) at (-1,3) {$\scriptstyle(2^3)$};
  \node (e2) at (1,3) {$\scriptstyle(3,1^2)$};
  \node (f) at (0,2) {$\scriptstyle(2^2,1^2)$};
  \node (g) at (0,1) {$\scriptstyle(2,1^4)$};
  \node (min) at (0,0) {$\scriptstyle(1^6)$};
  \draw (min) -- (g) -- (f) -- (e1) -- (d) -- (c1)-- (b)-- (a) -- (max) ;
   \draw (min) -- (g) -- (f) -- (e2) -- (d) -- (c2)-- (b)-- (a) -- (max) ;
\end{tikzpicture}

{\small Figure 1: Dominance order for partitions of 6}
\end{center}

We remark that Conjecture 1 is false if one considers dominance order  instead of reverse lexicographic order.  It fails for the  first case  in which dominance  departs from reverse lexicographic order, $n=6,$ as the following example shows:

\begin{example}\label{ex1.6}
The seven partitions (weakly) dominated by $(4,1^2)$ are (see Figure 1 above)
$$\{(4,1^2), (3,2,1), (2^3), (3,1^3), (2^2,1^2) , (2, 1^4), (1^6)\}$$
\end{example}

  The sum of power sums is thus 
  $p_4p_1^2 +p_3p_2p_1+ p_2^3+p_3 p_1^3+ p_2^2 p_1^2 +p_2 p_1^4 +p_1^6;$
  in the corresponding $S_n$-module,  the sign appears with negative multiplicity  (all other irreducibles occur with positive coefficient): 
  \vskip  -.25in
 \begin{align*}& 7 s_{(6)} + 11 s_{(5, 1)} + 15 s_{(4, 2)} + 8  s_{(4, 1^2)} + 3 s_{(3^2)}  + 14 s_{(3, 2, 1)}\\
 & + 10 s_{(3, 1^3)} + 7 s_{(2^3)}  + 5 s_{(2^2, 1^2)}
+ 5 s_{(2, 1^4)}
 - s_{(1^6)}.\end{align*}
This is the only instance that fails for $n=6.$    For $\mu=(4, 1^{n-4}),$  (the hook with one part equal to 4), similarly, up to $n=12,$  the only irreducible with
negative coefficient is the sign, appearing with coefficient $(-1).$

\section{Intervals in Reverse Lexicographic Order}

The following fact about the representation $\psi_n$  was first proved by Avital Frumkin. See \cite[Solution to Exercise 7.71]{St4EC2} for more references.   

\begin{theorem}\label{thm2.1}\cite{Fr}  If $n\neq 2,$ the representation $\psi_n$ contains all irreducibles.
\end{theorem}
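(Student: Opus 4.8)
The plan is to read off the multiplicity of $s_\lambda$ in $\psi_n$ group-theoretically. Since the multiplicity of $s_\lambda$ in $\psi_\mu$ equals the corresponding partial sum of character values (as noted after Definition~\ref{def1.2}), taking $\mu=(n)$ gives $\langle \psi_n, s_\lambda\rangle=\sum_{\nu\vdash n}\chi^\lambda(\nu)$, the unweighted sum of a row of the character table. Thus Theorem~\ref{thm2.1} is the assertion that this row sum is at least $1$ for every $\lambda$ when $n\neq 2$. I would not attack this signed sum head-on; instead I would use that $\psi_n$ is the Frobenius characteristic of the genuine conjugation action of $S_n$ on itself, and decompose that action into orbits.

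First I would decompose the conjugation module. The orbits are the conjugacy classes, and the stabiliser of a permutation of cycle type $\nu$ is its centraliser $Z_\nu\cong \prod_i(\mathbb{Z}/i\wr S_{m_i})$, where $\nu$ has $m_i$ parts equal to $i$. Hence, as $S_n$-modules, the conjugation module is $\bigoplus_{\nu\vdash n}\mathrm{Ind}_{Z_\nu}^{S_n}\mathbf 1$, and applying the characteristic map recovers Part~(1) of Theorem~\ref{thm1.1}, since $\mathrm{ch}\,\mathrm{Ind}_{\mathbb{Z}/i}^{S_i}\mathbf 1=f_i$ and induction from a product of groups corresponds to the product of characteristics (giving the plethysms $h_{m_i}[f_i]$). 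By Frobenius reciprocity the multiplicity of $\chi^\lambda$ in $\psi_n$ is therefore
\[ \langle \psi_n, s_\lambda\rangle=\sum_{\nu\vdash n}\langle \mathbf 1,\ \mathrm{Res}_{Z_\nu}\chi^\lambda\rangle=\sum_{\nu\vdash n}\dim (S^\lambda)^{Z_\nu}, \]
a sum of \emph{nonnegative} integers. This is the decisive reformulation: it replaces the signed row sum by a manifestly nonnegative quantity, reducing the theorem to an existence statement.

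The key step is then: for each $\lambda\vdash n$ (with $n\neq 2$), exhibit a single cycle type $\nu=\nu(\lambda)$ for which $S^\lambda$ carries a nonzero $Z_\nu$-invariant vector, equivalently for which the trivial character of $Z_\nu$ occurs in $\mathrm{Res}_{Z_\nu}\chi^\lambda$. I would organise the search toward centralisers that are as small as possible, namely those of permutations whose cycle type $\nu$ has distinct parts, so that $Z_\nu$ is abelian and the invariant is governed by cyclic-group restriction data for the individual cycles, the most tractable case. The sign representation $\lambda=(1^n)$ is the instructive extreme and simultaneously explains the hypothesis $n\neq2$: here $(S^{(1^n)})^{Z_\nu}\neq0$ exactly when $Z_\nu\subseteq A_n$, and a short parity check on the wreath-product generators shows this holds precisely when $\nu$ has distinct odd parts. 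Such a $\nu$ exists for every $n\neq2$ but not for $n=2$, matching the known value $\psi_2=2s_{(2)}$.

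The main obstacle I anticipate is making the choice of $\nu(\lambda)$ \emph{uniform} across all $\lambda$: a single class does not suffice (for instance an $n$-cycle witnesses most $\lambda$ via its cyclic centraliser $\mathbb{Z}/n$ but misses $\lambda=(n-1,1)$, which is instead caught by the class $(n-1,1)$), so one is forced into a construction or case analysis producing, for each shape, an explicit nonzero invariant inside a convenient model of $S^\lambda$ (e.g. as the Specht submodule of the tabloid module $M^\lambda$, where invariants can be built by averaging a suitable tabloid over a $Z_\nu$-orbit and checking nonvanishing of its $S^\lambda$-component). An alternative route that avoids the per-shape choice would be to prove $\sum_\nu\dim(S^\lambda)^{Z_\nu}\ge 1$ directly by a generating-function manipulation of $\prod_i(1-p_i)^{-1}=\sum_n\psi_n$ together with the plethystic form in Theorem~\ref{thm1.1}(1); I expect the termwise-nonnegative reformulation above to be the more robust starting point, with the uniform construction of invariants being the genuinely hard part.
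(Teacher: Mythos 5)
Your reformulation is correct, and it is the standard frame for this problem: decomposing the conjugation module into orbits gives $\psi_n=\sum_{\nu\vdash n}\mathrm{ch}\,\mathrm{Ind}_{Z_\nu}^{S_n}\mathbf 1$ (this is precisely Theorem~\ref{thm1.1}(1), Solomon's observation), and Frobenius reciprocity turns $\langle\psi_n,s_\lambda\rangle$ into the manifestly nonnegative sum $\sum_{\nu\vdash n}\dim (S^\lambda)^{Z_\nu}$. Your treatment of the sign representation is also correct: $Z_\nu\subseteq A_n$ exactly when $\nu$ has distinct odd parts, such $\nu$ exist for every $n\neq 2$ and for no partition of $2$, which explains the hypothesis. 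But the proof stops there. The theorem is equivalent to the assertion that for \emph{every} $\lambda\vdash n$ some centraliser $Z_\nu$ fixes a nonzero vector in $S^\lambda$, and that assertion --- which you yourself flag as ``the genuinely hard part'' --- is the entire content of Frumkin's theorem, not a technical detail to be deferred. Your proposal establishes it only for $\lambda=(1^n)$. For all other shapes you offer two unexecuted strategies (averaging a tabloid over a $Z_\nu$-orbit and checking that its Specht component survives, or a generating-function manipulation), and neither is carried out; nor is there reason to expect either to be routine, since the related refinements quoted in the paper (Theorem~\ref{thm2.2} on global classes, Theorem~\ref{thm2.3} on the irreducibles in $f_n$) are themselves substantial theorems from separate papers. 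So there is a genuine gap: the reduction is sound, but the existence statement it reduces to is unproved except in one case.

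For what it is worth, the paper itself gives no proof of Theorem~\ref{thm2.1}: it is cited from Frumkin \cite{Fr}, with further references indicated in \cite[Solution to Exercise 7.71]{St4EC2}. So your setup cannot be checked against an argument in the paper step by step; what can be said is that your nonnegative-multiplicity reformulation is the frame in which such results are proved in the literature, and that completing your plan would require, for each shape $\lambda$, an explicit class $\nu(\lambda)$ together with a proof that $(S^\lambda)^{Z_{\nu(\lambda)}}\neq 0$ --- i.e., it would require reproving Frumkin's theorem rather than merely reorganising it.
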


We will need the following stronger result of \cite{Su2} characterising the conjugacy classes containing all irreducibles.  Recall (see \cite{Su2} for  references to the literature) that a conjugacy class in a finite group $G$ is called {\it global} if the orbit of the conjugacy action corresponding to that class contains all irreducibles of $G.$ 

\begin{theorem}\label{thm2.2}\cite[Theorem 5.1]{Su2} Let $n\neq 4, 8.$ Then the conjugacy class indexed by a partition $\lambda$ contains all irreducibles, i.e. it is a global class, if and  only if $\lambda$ has at least two parts, and all its parts are distinct and odd. If $n=8,$ the conjugacy class indexed by $(7,1)$ is global, while the class of the partition $(5,3)$ contains all irreducibles except those indexed by $(4^2)$ and $(2^4).$ 
\end{theorem}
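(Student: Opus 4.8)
The plan is to translate the notion of a global class into symmetric functions and then to use the sign character as the decisive detector. By Solomon's formula (Theorem~\ref{thm1.1}(1)), the Frobenius characteristic of the conjugation action of $S_n$ on the single class indexed by $\lambda$ (with $m_i$ parts equal to $i$) is $g_\lambda := \prod_i h_{m_i}[f_i]$; equivalently this is the characteristic of $\mathrm{Ind}_{Z_\lambda}^{S_n}\mathbf{1}$, where $Z_\lambda=\prod_i (C_i\wr S_{m_i})$ is the centralizer. Thus $C_\lambda$ is global precisely when every Schur function $s_\mu$ occurs in $g_\lambda$ with positive coefficient, equivalently when $\langle \mathrm{Res}_{Z_\lambda}\chi^\mu,\mathbf{1}\rangle>0$ for all $\mu\vdash n$.

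For necessity I would first compute the multiplicity of the sign $s_{(1^n)}$. Since $\langle g_\lambda, s_{(1^n)}\rangle=\langle \mathrm{Res}_{Z_\lambda}\,\mathrm{sgn},\mathbf{1}\rangle$, this equals $1$ if $\mathrm{sgn}$ restricts trivially to $Z_\lambda$ and $0$ otherwise. A generator of the cyclic factor $C_i$ is an $i$-cycle, of sign $(-1)^{i-1}$, while a transposition in the factor $S_{m_i}$ interchanges two blocks of size $i$ and so has sign $(-1)^{i}$. Hence $\mathrm{sgn}|_{Z_\lambda}$ is trivial if and only if every part is odd and no part is repeated, i.e. if and only if $\lambda$ has distinct odd parts. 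This already excludes every $\lambda$ with an even part or a repeated part. To force at least two parts I would treat the $n$-cycle class separately: here $g_{(n)}=f_n$, and I would exhibit a missing irreducible (using the known description of $f_n$ via standard Young tableaux of major index divisible by $n$, or directly as for $n=3$, where $f_3=s_{(3)}+s_{(1^3)}$ omits $s_{(2,1)}$), showing $C_{(n)}$ is never global.

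For sufficiency, assume $\lambda$ has $k\ge 2$ distinct odd parts, so every $m_i\in\{0,1\}$ and $g_\lambda=\prod_{i\in\lambda} f_i$ (using $h_1[f_i]=f_i$). I would first record the Schur support of each factor: each $f_i$ with $i$ odd contains both $s_{(i)}$ and $s_{(1^i)}$, and I would want a description of its full support via the tableau/major-index count. The heart of the argument is a covering statement: by Pieri's rule, multiplication by $s_{(i)}$ adds horizontal strips and multiplication by $s_{(1^i)}$ adds vertical strips, and I would argue that $\prod_{i\in\lambda} f_i$ is \emph{full}, i.e. meets every $\mu\vdash n$. Concretely, given a target $\mu$, I would peel off parts one at a time, at each stage removing either a horizontal or a vertical $i$-strip from $\mu$ so as to remain inside a partition and eventually reach the empty shape; the availability of both strip directions is exactly what makes the covering possible once $k\ge 2$.

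The main obstacle is precisely this covering step, and the exceptional cases reveal why it is delicate. For a single factor the support is far from full ($f_3=s_{(3)}+s_{(1^3)}$ already omits $s_{(2,1)}$), so the argument genuinely needs the interaction of at least two factors; and even two factors can fail, as the proof must show for $n=8$, where $f_5\cdot f_3$ omits exactly $s_{(4^2)}$ and $s_{(2^4)}$. I would therefore isolate a minimal-configuration analysis: verify the borderline products directly (for $n=4$, $g_{(3,1)}=f_3\cdot s_{(1)}=s_{(4)}+s_{(3,1)}+s_{(2,1^2)}+s_{(1^4)}$ omits $s_{(2^2)}$; for $n=8$, $(7,1)$ is full but $(5,3)$ is not), and then show that outside these small cases the strips always have enough room, so that the covering induction closes. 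Pinpointing the exact arithmetic condition under which a horizontal or vertical strip can always be removed at each stage, and matching it against the $n=8$ failure, is where the real work lies.
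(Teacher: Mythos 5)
First, a point of comparison: the paper does not prove this statement at all. Theorem~\ref{thm2.2} is quoted from the author's earlier work \cite[Theorem 5.1]{Su2}, so there is no internal proof to measure your attempt against; your proposal has to stand on its own. Its first half does stand. The translation of globality into Schur positivity of $g_\lambda=\prod_i h_{m_i}[f_i]=\mathrm{ch}\,\mathrm{Ind}_{Z_\lambda}^{S_n}\mathbf{1}$ is correct; the sign computation on the centralizer (an $i$-cycle has sign $(-1)^{i-1}$, a block transposition swapping two $i$-blocks has sign $(-1)^{i}$) correctly shows that $s_{(1^n)}$ occurs in $g_\lambda$ if and only if $\lambda$ has distinct odd parts; and the one-part case is excluded by Theorem~\ref{thm2.3}, since $f_n$ always misses $(n-1,1)$. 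This necessity argument is clean and is in fact the same restriction technique the paper uses inside the proof of Proposition~\ref{prop2.4}.

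The genuine gap is in sufficiency, which is the hard direction. Your concrete mechanism --- peel off a horizontal or vertical $i$-strip for each part $i$, i.e.\ use only the two hook constituents $s_{(i)}$ and $s_{(1^i)}$ of each factor $f_i$ --- cannot work. The sub-sum of $g_\lambda$ that this mechanism sees is $\prod_{i\in\lambda}\bigl(s_{(i)}+s_{(1^i)}\bigr)$, and already for $n=6$, $\lambda=(5,1)$ this equals $s_{(6)}+s_{(5,1)}+s_{(2,1^4)}+s_{(1^6)}$: it misses seven of the eleven irreducibles of $S_6$ (for instance $s_{(3,3)}$, which enters $g_{(5,1)}$ only through the non-hook constituent $s_{(3,2)}$ of $f_5$), even though $(5,1)$ \emph{is} a global class. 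So the availability of both strip directions does not make the covering possible once $k\geq 2$; to reach a generic $\mu$ one needs the full support of each $f_i$ (Swanson's theorem, Theorem~\ref{thm2.3}) and then a substantially more delicate interaction argument, from which the $n=4$ and $n=8$ exceptions must fall out. You concede that pinpointing this is ``where the real work lies,'' but that work is precisely the content of the cited theorem; as written, the proposal establishes only the ``only if'' direction.
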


We also require some information on the irreducibles appearing in $f_n,$ the $S_n$-action by conjugation on the class of $n$-cycles.  Since this is a permutation representation with one orbit,  the trivial representation  appears exactly once. It is also easy to see that the sign representation appears only if $n$ is odd.  We will  make use of the following definitive result of Joshua Swanson:

\begin{theorem}\label{thm2.3}(\cite{Sw}, \cite[Lemma 4.2]{Su1}) Let $n\geq 1.$ If $n$ is odd, the representation $f_n$ contains all irreducibles  except those indexed by $(n-1,1)$ and $(2, 1^{n-2}).$  If $n$ is even, $f_n$ contains all irreducibles except $(n-1,1)$ and $(1^n).$ 
\end{theorem}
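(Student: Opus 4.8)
The plan is to realise $f_n$ as an induced representation and then convert the multiplicity of each $\chi^\lambda$ into a combinatorial count governed by the major index. Since the conjugation action on the class of $n$-cycles is transitive and the stabiliser of a fixed $n$-cycle $c$ is its centraliser $\langle c\rangle\cong C_n$ (of order $n$), the representation $f_n$ is the Frobenius characteristic of $\mathrm{Ind}_{C_n}^{S_n}\mathbf 1$. By Frobenius reciprocity the multiplicity of $\chi^\lambda$ in $f_n$ equals the multiplicity of the trivial character of $C_n$ in $\chi^\lambda\!\downarrow_{C_n}$, namely $\frac1n\sum_{d\mid n}\phi(d)\,\chi^\lambda(d^{n/d})$, where $d^{n/d}$ denotes the class of $n/d$ cycles of length $d$ (the cycle type of $c^k$ when $\gcd(k,n)=n/d$, of which there are $\phi(d)$).

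First I would invoke the theorem of Kraśkiewicz--Weyman, which identifies the graded components of $\mathrm{Ind}_{C_n}^{S_n}$ of a one-dimensional character with major-index residue classes; for the trivial character this yields
\[
\langle f_n,\, s_\lambda\rangle \;=\; \#\{\,T\in\mathrm{SYT}(\lambda): \mathrm{maj}(T)\equiv 0 \pmod n\,\}.
\]
This reduces the theorem to an existence statement: a standard Young tableau of shape $\lambda$ with $\mathrm{maj}$ divisible by $n=|\lambda|$ exists precisely when $\lambda$ is not one of the listed exceptions.

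Next I would dispatch the exceptional shapes by direct computation, using the rigid descent structure of hooks. For a hook $(a,1^{n-a})$ the descent set of an SYT is $\{\,l-1: l\text{ a leg entry}\,\}$, so $\mathrm{maj}$ is determined by the leg values. For $(n-1,1)$ the single leg entry $l\in\{2,\dots,n\}$ gives $\mathrm{maj}=l-1\in\{1,\dots,n-1\}$, never $\equiv 0\pmod n$; for its conjugate $(2,1^{n-2})$ one gets $\mathrm{maj}=\binom n2-(m-1)$ with $m-1\in\{1,\dots,n-1\}$, hitting $0\bmod n$ iff $\binom n2\equiv n/2\pmod n$, i.e. iff $n$ is even; and the unique tableau of shape $(1^n)$ has $\mathrm{maj}=\binom n2$, divisible by $n$ iff $n$ is odd. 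Since $\binom n2\equiv 0$ for $n$ odd and $\equiv n/2$ for $n$ even modulo $n$, these three computations reproduce exactly the parity-dependent exception list, while the row $(n)$ (with $\mathrm{maj}=0$) is always present.

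The main obstacle is the positive direction: showing that for every non-exceptional $\lambda\vdash n$ at least one SYT has $\mathrm{maj}\equiv 0\pmod n$. Here I would appeal to Swanson's existence theorem for tableaux of prescribed modular major index. The underlying mechanism is the evaluation of the fake-degree polynomial $f^\lambda(q)=\sum_T q^{\mathrm{maj}(T)}$ at $n$-th roots of unity, through the $q$-hook-length formula $f^\lambda(q)=q^{b(\lambda)}[n]_q!/\prod_{c\in\lambda}[h(c)]_q$, combined with the near-uniform distribution of $\mathrm{maj}$ modulo $n$ away from hook shapes. Controlling these root-of-unity evaluations uniformly in $\lambda$ --- equivalently, ruling out unexpected cancellation in $\frac1n\sum_{d\mid n}\phi(d)\,\chi^\lambda(d^{n/d})$ for all non-hook shapes --- is the technical heart of the argument and the step I expect to be hardest; the three hook computations above are precisely the boundary cases where this near-uniformity degenerates.
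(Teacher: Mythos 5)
Your proposal is correct and coincides with the route behind the paper's treatment: the paper states this theorem without proof, as a citation to \cite{Sw} and to \cite[Lemma 4.2]{Su1}, and your argument --- realising $f_n$ as $\mathrm{ch}\,\mathrm{Ind}_{C_n}^{S_n}\mathbf{1}$, applying Kra\'skiewicz--Weyman to turn the multiplicity of $s_\lambda$ into the count of standard Young tableaux of shape $\lambda$ with $\mathrm{maj}\equiv 0 \pmod n$, and settling the shapes $(n)$, $(n-1,1)$, $(2,1^{n-2})$, $(1^n)$ by the correct direct descent-set computations --- is exactly the standard reduction by which those cited sources establish the result. The only caveat is that your final step, existence of a tableau with $\mathrm{maj}\equiv 0\pmod n$ for every non-exceptional shape, is not argued independently but delegated to Swanson's theorem, i.e.\ to the very result being cited, so your write-up is a faithful unwinding of the citation rather than a new proof of its combinatorial core.
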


The result below was stated without proof in \cite{Su1};  we sketch a proof here.  (There is a misprint in the statement of Part (4) in \cite[Proposition 4.21]{Su1} which is  corrected below.) 
\begin{proposition}\label{prop2.4}\cite[Proposition 4.21]{Su1}  The multiplicity in $\psi_n $ of the irreducible indexed by the partition 
\begin{enumerate}[label=(\arabic*)]
\item $(n)$ is $p(n)$, the number of partitions of $n.$
\item $(1^n)$ is the number of partitions of $n$ into parts that are distinct and odd, which is also the number of self-conjugate partitions of $n$.  This multiplicity is nonzero for $n\neq 2.$
\item $(n-1,1)$ is $\sum_{\lambda\vdash n}\left( |\{i: m_i(\lambda)\geq 1\}| -1\right),$ which in turn equals the number of distinct parts in all the partitions of $n,$ minus the number of partitions of $n.$
In particular  this multiplicity is at least the number of non-rectangular partitions of $n,$ and hence at least $(n-1).$
\item  $(2,1^{n-2})$ is 
$\sum_{\lambda\in T_1} (\ell(\lambda)-1) +|T_2\cup T_3|,$

\noindent
where the first sum runs over  the set $T_1$ of all partitions of $n$ $\lambda$ with parts that are distinct and odd, and  the set $T_2$ consists of partitions $\lambda$ of $n$ with all parts odd and distinct except for  one part of multiplicity 2, while $T_3$ consists of partitions with all parts odd and distinct except for exactly one even part. 
\end{enumerate}
\end{proposition}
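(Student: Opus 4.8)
The plan is to reduce the entire proposition to the evaluation of four unweighted column sums in the character table. By Theorem~\ref{thm1.1}(2) we have $\psi_n=\sum_{\lambda\vdash n}p_\lambda$, and the Frobenius formula $\langle p_\lambda,s_\mu\rangle=\chi^\mu(\lambda)$ shows that the multiplicity of $s_\mu$ in $\psi_n$ is $\sum_{\lambda\vdash n}\chi^\mu(\lambda)$, the sum of the values of $\chi^\mu$ taken once over each conjugacy class. I would then substitute the four explicit characters: $\chi^{(n)}\equiv 1$; $\chi^{(1^n)}(\lambda)=(-1)^{n-\ell(\lambda)}$; the standard character $\chi^{(n-1,1)}(\lambda)=m_1(\lambda)-1$ (number of fixed points minus one); and, since $(2,1^{n-2})$ is the transpose of $(n-1,1)$ and transposition twists by the sign, $\chi^{(2,1^{n-2})}(\lambda)=(-1)^{n-\ell(\lambda)}\bigl(m_1(\lambda)-1\bigr)$. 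Everything then becomes a partition-counting computation.

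Parts (1) and (2) are quick. For (1), summing the constant $1$ over all classes gives $p(n)$. For (2), I would write $(-1)^{n-\ell(\lambda)}=\prod_i\bigl((-1)^{i-1}\bigr)^{m_i(\lambda)}$, so that $\sum_\lambda(-1)^{n-\ell(\lambda)}$ has generating function $\prod_{i\text{ odd}}(1-q^i)^{-1}\prod_{i\text{ even}}(1+q^i)^{-1}=\prod_{j\ge1}(1+q^{2j-1})$, the generating function for partitions into distinct odd parts; the classical principal-hook bijection identifies these with self-conjugate partitions. Nonvanishing for $n\ne2$ follows by exhibiting one such partition, namely $(n)$ for $n$ odd and $(n-1,1)$ for $n$ even with $n\ge4$.

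For (3), the class sum is $\sum_{\lambda\vdash n}\bigl(m_1(\lambda)-1\bigr)=\bigl(\sum_\lambda m_1(\lambda)\bigr)-p(n)$. The reformulation in the statement then rests on the identity $\sum_{\lambda\vdash n}m_1(\lambda)=\sum_{\lambda\vdash n}|\{i:m_i(\lambda)\ge1\}|$ (the total number of $1$'s equals the total number of distinct part sizes), which I would prove by checking that both sides have generating function $\tfrac{q}{1-q}\prod_i(1-q^i)^{-1}$. Since only partitions with at least two distinct parts, i.e. the non-rectangular ones, contribute, and each contributes at least $1$, the multiplicity is at least the number of non-rectangular partitions; exhibiting the hooks $(n-k,1^k)$ for $1\le k\le n-2$ together with $(n-2,2)$ furnishes $n-1$ of them for $n\ge5$, the finitely many smaller cases being immediate.

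Part (4) is the crux and the step I expect to be the real obstacle. Here the class sum is $\sum_{\lambda}(-1)^{n-\ell(\lambda)}\bigl(m_1(\lambda)-1\bigr)$; subtracting the sign multiplicity of Part (2) reduces matters to evaluating $A:=\sum_\lambda(-1)^{n-\ell(\lambda)}m_1(\lambda)$. Isolating the part equal to $1$, its generating function is $\tfrac{q}{(1-q)^2}\prod_{i\ge2}\bigl(1-(-1)^{i-1}q^i\bigr)^{-1}=\tfrac{q}{1-q}\prod_{j\ge1}(1+q^{2j-1})$. I would then match this against the claimed answer by computing the generating functions of $\sum_{\lambda\in T_1}\ell(\lambda)$, of $|T_2|$, and of $|T_3|$ separately (each of the form $\prod_j(1+q^{2j-1})$ times an explicit series), the decisive simplification being $\sum_{j\ge1}q^{2j-1}+\tfrac{q^2}{1-q^2}=\tfrac{q}{1-q}$, which collapses their sum to $A$. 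Together with the disjointness $T_2\cap T_3=\varnothing$ this yields $A=\sum_{\lambda\in T_1}\ell(\lambda)+|T_2\cup T_3|$, whence the multiplicity equals $A-|T_1|=\sum_{\lambda\in T_1}(\ell(\lambda)-1)+|T_2\cup T_3|$, as asserted. The delicate point is the bookkeeping that makes exactly the three families $T_1,T_2,T_3$ emerge with the correct weights; a sign-reversing involution on partitions, pairing $\lambda$ with the partition obtained by merging or splitting a suitable pair of parts and leaving as fixed points precisely those partitions too rigid to be paired, would give a more transparent bijective derivation of the same identity, and I would attempt to set this up in place of the generating-function computation (the weight $\ell(\lambda)$ on $T_1$ being the feature that makes such an involution require care).
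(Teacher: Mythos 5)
Your proposal is correct, but it reaches Parts (3) and (4) by a genuinely different route than the paper. You stay entirely on the power-sum side of Theorem~\ref{thm1.1}(2): the multiplicity of $s_\mu$ in $\psi_n$ is the row sum $\sum_{\lambda\vdash n}\chi^\mu(\lambda)$, and you substitute the explicit values $\chi^{(n-1,1)}(\lambda)=m_1(\lambda)-1$ and $\chi^{(2,1^{n-2})}(\lambda)=(-1)^{n-\ell(\lambda)}\bigl(m_1(\lambda)-1\bigr)$, reducing everything to partition-counting identities settled by generating functions; your computations do check out, since $A(q)=\tfrac{q}{1-q}\prod_j(1+q^{2j-1})$ on one side, while on the other the $T_1$-length, $T_2$ and $T_3$ series combine via $\tfrac{q^{2j-1}+q^{4j-2}}{1+q^{2j-1}}=q^{2j-1}$ (a step you leave implicit) followed by your quoted simplification $\sum_j q^{2j-1}+\tfrac{q^2}{1-q^2}=\tfrac{q}{1-q}$. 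The paper instead works from the plethystic decomposition of Theorem~\ref{thm1.1}(1): it restricts $\psi_n$ to $S_{n-1}$ by applying $\partial/\partial p_1$ to $\sum_\lambda\prod_i h_{m_i}[f_i]$ and, by Frobenius reciprocity, reads off the multiplicities of $s_{(n-1,1)}$ and $s_{(2,1^{n-2})}$ from the number of copies of the trivial (respectively sign) character of $S_{n-1}$ in each restricted orbit term; the families $T_1,T_2,T_3$ then \emph{emerge} from the criterion for the sign to occur in $h_m[f_i]$ ($i$ odd with $m=1$, or $m=0$), and Part (3) counts distinct parts directly, with no need for your auxiliary identity that the total number of $1$'s equals the total number of distinct part sizes. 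The trade-off is clear: your argument is more elementary and self-contained (no plethysm, no restriction or induction), but it is a verification that presupposes the exact formula to be matched, whereas the paper's argument derives the three sets structurally and needs no generating-function collapse; Parts (1) and (2) are essentially identical in both treatments. One caveat, which you share with the paper's own statement: the final inequality ``at least $n-1$'' in Part (3) genuinely requires $n\geq 5$, since for $n=4$ the multiplicity is $2$ (the number of non-rectangular partitions) while $n-1=3$; so the small cases you defer as ``immediate'' include one that is actually an exception to that last bound.
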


\begin{proof} Part (1) is clear since $p(n)$ is the number of conjugacy classes of $n.$  As alluded to in the Introduction, Part (2) is a computation of the sum $\sum_{\mu\vdash n} (-1)^{n-\ell(\mu)}$, 
and follows from the standard generating function identity for integer partitions by number of parts.  Since the sign representation always occurs in $f_n$ if $n$ is odd, it also occurs in $f_1 f_{n-1}$ if $n$ is even, i.e. in the conjugacy class $(n-1,1)$ for $n\neq 2.$ The second statement of Part (2) follows.

For Parts (3) and (4), by Frobenius reciprocity, we compute the multiplicity of the trivial (respectively sign) representation in 
the restriction of $\psi_n$ to $S_{n-1},$ using the partial derivative with respect to $p_1$ (see e.g. \cite{M}) and Theorem~\ref{thm1.1}~(1).  Here we also need the fact that $f_n$ always contains exactly one copy of the trivial representation, and contains exactly one copy of the sign representation when $n$ is odd, and none if $n$ is even.  The restriction of $\psi_n$ to $S_{n-1}$ has Frobenius characteristic
\[ \sum_{\lambda\vdash n} \sum_{i:m_i\geq 1} (h_{m_i-1}[f_i]\cdot p_1^{i-1} \cdot \prod_{j\neq i} h_{m_j}[f_j]). \]

Let $\lambda\vdash n.$ For each $i$ such that $m_i(\lambda)\ge 1,$ the inner sum contributes 1 to the multiplicity of the trivial representation  for each distinct part $i$ of $\lambda.$
Thus  in Part (3), since  $s_{(n-1,1)}=h_{n-1}h_1-h_n,$  the required multiplicity is $p(n)$ less than the multiplicity of $s_{(n-1)}$ in the restriction of $\psi_n$ to $S_{n-1},$ and the result follows.

Part (4) follows by a similar analysis, using standard facts about how the sign representation restricts to wreath products.  
Again let $\lambda\vdash n,$ and consider a fixed $i$ such that $m_i\ge 1.$ 
By restricting the sign to the appropriate subgroup, we see that the multiplicity in $h_{m_j}[f_j]$ is nonzero (and thus equal to 1) if and only if $j$ is odd and $m_j=1.$  Similarly the 
multiplicity in $h_{m_i-1}[f_i]$ is nonzero (and thus equal to 1) if and only if either $i$ is odd and $m_i-1=1,$ or $i$ is even and $m_i-1=0.$ 

This implies that every term $h_{m_i-1}[f_i]\cdot p_1^{i-1} \cdot \prod_{j\neq i} h_{m_j}[f_j]$ contributes 1 to the multiplicity if $\lambda$ has all parts odd and distinct, but if $\lambda\in T_2\cup T_3,$ only one of these terms,  
namely when $i$ is either the odd part of multiplicity 2, or the unique even part, makes a nonzero contribution (equal to 1).  For all other $\lambda$ the multiplicity is zero.  This completes the proof. 
\end{proof}

%
%

\begin{lemma}\label{lem2.5} \begin{enumerate}[label=(\arabic*)]  
\item Let $n\neq 3.$ Then 
$f_n f_1$ contains  all irreducibles except $ (1^{n+1})$ if $n$ is even. 
\item Let $n\geq 5,$ and let $n$ be odd. Then the product $f_n f_2$ contains all irreducibles except the sign.
\item Let $n\geq 5.$ If $n$ is odd,  every irreducible except the sign appears in each of the conjugacy classes $(n-2,2)$ and $(n-2,1,1).$ 
\item Let $n\geq 6.$ If $n$ is even, every irreducible except the sign appears in $f_{n-3}f_2 f_1.$
\item Let $n\geq 6$ be even.  Then every irreducible appears in 
$f_{n} f_2 $ except for the sign and the one indexed by $(2,1^{n}),$ which does however appear in $ f_{n-2} f_3 f_1.$  In particular all the irreducibles except for the sign appear among the two conjugacy classes  
 $(n,2)$ and $(n-2,3,1).$
\end{enumerate}
\end{lemma}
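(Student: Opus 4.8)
The plan is to read each product as the Frobenius characteristic of a conjugacy class (Theorem~\ref{thm1.1}(1)) and to decide membership of an irreducible by Littlewood--Richardson positivity alone. The class indexed by $\lambda$ has characteristic $\prod_i h_{m_i}[f_i]$, and here every small factor is explicit: $f_1=s_{(1)}$, $f_2=s_{(2)}$, and $f_3=s_{(3)}+s_{(1^3)}$ (Theorem~\ref{thm2.3} at $n=3$), while the large factor $f_m$ is given by Theorem~\ref{thm2.3}---for odd $m$ it omits only $(m-1,1)$ and $(2,1^{m-2})$, and for even $m$ only $(m-1,1)$ and $(1^m)$. Because each $f_m$ is Schur-positive and products of Schur-positive functions are Schur-positive, an irreducible $s_\mu$ occurs in a product $f_a f_b\cdots$ as soon as $\mu$ appears in an LR expansion $s_\alpha s_\beta\cdots$ with each $s_\alpha$ present in the matching $f_i$; and $s_\mu$ is absent exactly when no such expansion exists. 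Every claim thus becomes a statement about adding and removing boxes, horizontal strips, and vertical strips.

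First I would settle (1) and (2) by Pieri. In (1) the factor $f_1=s_{(1)}$ adds one box, so $s_\nu$ occurs in $f_nf_1$ iff some corner-deletion of $\nu$ survives in $f_n$; as the forbidden set of $f_n$ has two elements, $s_\nu$ fails only if every corner-deletion of $\nu$ is forbidden. Using $n\neq 3$ to dispose of the rectangular shapes, this occurs precisely for $\nu=(1^{n+1})$ when $n$ is even and never when $n$ is odd. In (2) the factor $f_2=s_{(2)}$ adds a horizontal $2$-strip; the sign $(1^{n+2})$ can never occur because a single column admits no horizontal $2$-strip, and for every other $\nu\vdash n+2$ I would produce a horizontal $2$-strip whose removal avoids the two forbidden hooks of $f_n$, checking the shapes with few removals (rows, hooks, near-columns) by hand.

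Part (3) I would reduce to (2): since $h_2[f_1]=h_2=f_2$, both classes $(n-2,2)$ and $(n-2,1^2)$ have the common characteristic $f_{n-2}f_2$, so (2) applied to the odd index $n-2$ gives the result once $n-2\ge 5$, the case $n=5$ being checked directly. Part (4) is (2) with an extra box: in $f_{n-3}f_2f_1$ the odd factor $f_{n-3}$ again supplies all but two hooks, the sign is excluded because the factor $f_2=s_{(2)}$ is not a single column, and every other $\mu\vdash n$ is built by adding the box and the horizontal $2$-strip to a surviving partition of $n-3$; here the column $(1^{n-3})$ does survive in the odd $f_{n-3}$, which is what lets the column-type targets through. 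Part (5) is the delicate one: in $f_nf_2$ (even $n$) the unique horizontal $2$-strip reaching $(2,1^n)$ forces the inner shape $(1^n)$, which is forbidden in $f_n$, so exactly $(1^{n+2})$ and $(2,1^n)$ are missed. I would recover $(2,1^n)$ inside $f_{n-2}f_3f_1$ by using $s_{(1^3)}\in f_3$ and $s_{(2,1^{n-4})}\in f_{n-2}$ to grow the first column up to $(2,1^{n-1})$ and then adding the last box, and verify that $f_{n-2}f_3f_1$ still omits the sign (again a column obstruction, since $(1^{n-2})\notin f_{n-2}$); the two classes then realize every irreducible but the sign.

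The main obstacle throughout is the coverage direction---showing that no non-exceptional irreducible is missed. For generic $\mu$ there are many admissible strip removals and the two or three forbidden hooks are trivially avoided, but at the boundary shapes (rows, columns, hooks, two-row and two-column partitions, and near-rectangles) the supply of removals collapses, and one must check within each such family that some removal escapes the forbidden set. That the excluded hooks $(m-1,1)$, $(2,1^{m-2})$, $(1^m)$ are exactly the partitions with fewest neighbours is what makes these edge cases awkward, and the recovery of $(2,1^n)$ in (5), where the natural column $(1^{n-2})$ is no longer available in the even factor $f_{n-2}$, is the single trickiest point.
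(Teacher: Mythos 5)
Your proposal is correct in substance and follows the paper's proof essentially step for step: you read each class off via Theorem~\ref{thm1.1}(1), use Swanson's Theorem~\ref{thm2.3} for the constituents of $f_m$, handle (1)--(2) by Pieri box/strip arguments, reduce (3) to (2) through $f_2=h_2=h_2[f_1]$, obtain (4) from (2) at the odd index $n-3$, and in (5) pin down the two misses of $f_nf_2$ and recover $(2,1^n)$ from $s_{(1^3)}\subset f_3$ together with $s_{(2,1^{n-4})}\subset f_{n-2}$ --- exactly the paper's route, with only the cosmetic difference that in (2) you find a good strip removal for each target where the paper instead covers the potentially missed summands of $s_{(n-1,1)}h_2$ and $s_{(2,1^{n-2})}h_2$ by alternative constituents of $f_n$. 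One caution: your remark that the $n=5$ instance of (3) ``can be checked directly'' is actually false, since the paper's own displayed computation $f_3f_2=s_{(5)}+s_{(4,1)}+s_{(3,2)}+s_{(3,1^2)}+s_{(2,1^3)}$ shows $(2,2,1)$ is missing --- but this boundary defect is shared by the paper's proof (which invokes Part~(2) below its stated range, and similarly (4) fails at $n=6$, where $f_3f_2f_1$ misses $(2^3)$), so, like the paper, your argument really establishes (3) for $n\geq 7$ and (4) for $n\geq 8$, which is all that the application in Lemma~\ref{lem2.6} requires.
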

\begin{proof}  For {\bf Part (1)}: The result is clear for $n=1,2$ so assume $n\geq 4.$ 

If $n$ is odd this is immediate from Theorem~\ref{thm2.2}. If $n$ is even, then by Theorem~\ref{thm2.3}, $f_n$ contains all irreducibles except $(1^n)$ and $(n-1,1).$ Now $s_{(n-1,1)}\cdot f_1=s_{(n+1)}+s_{(n,1)}+s_{(n-1,2)}. $
But each of these summands appears in the product $g_n\cdot f_1$ for 
$g_n=s_{(n)}, s_{(n)}, s_{(n-2,2)} $ respectively, and each $g_n$ appears in $f_n.$  The only irreducible that does not appear is $(1^{n+1}).$ 

For {\bf Part (2)}: It is easy to compute, since $f_2=h_2$ and $f_3=h_3+e_3,$ 
$f_3 f_2= s_{(5)}+s_{(4,1)}+s_{(3,2)}+s_{(3,1,1)}+s_{(2,1,1,1)},$ so the product does not contain $s_{(2,2,1)}.$ 

So let $n\geq 5$  be odd.  By Theorem~\ref{thm2.3},  $f_n$ contains all irreducibles except those indexed by $(n-1,1)$ and $(2, 1^{n-2}).$ 
  We have 
$$s_{(n-1,1)}\cdot f_2=s_{(n-1,1)}\cdot h_2=s_{(n+1,1)}+s_{(n,2)}+s_{(n,1,1)}+s_{(n-1,3)}+s_{(n-1,2,1)}.\quad (A) $$
The first two summands appear in $g_n\cdot f_2$ for $g_n=s_{(n)}\cdot f_2.$ The last two summands appear in  $g_n\cdot f_2$ for $g_n=s_{(n-1,2)}.$ Finally $s_{(n-1,1,1)}$ appears in the product 
$g_n\cdot f_2$ for $g_n=s_{(n-2,1,1)},$ and this appears in $f_n$ for 
$n\geq 5.$  Thus in all cases $g_n$ appears in $f_n.$ 

Next consider the other missing irreducible, $(2,1^{n-2})$. We have 
$$s_{(2,1^{n-2})}\cdot f_2=s_{(2,1^{n-2})}\cdot h_2=s_{(4,1^{n-2})}+s_{(3,2,1^{n-3})}+s_{(3,1^{n-1})}+s_{(2^2, 1^{n-2})}. \quad (B)$$
The first three appear in the product $g'_n\cdot f_2$ for $g'_n=s_{(3, 1^{n-3})},$ which is a constituent of $f_n, n\geq 5.$  The last one appears in the product $g'_n\cdot f_2$ for $g'_n=s_{(2^2, 1^{n-4})},$ 
which again is a constituent of $f_n, n\geq 5.$  This completes the   argument.

For {\bf Part (3)}, observe that the conjugacy classes indexed by $(n,2)$ and $(n,1,1)$ both afford the same representation, namely $f_n \cdot h_2=f_n \cdot f_2.$  The result now follows from Part (2).

{\bf Part (4)} follows by applying Part (2) to $f_{n-3} f_2,$ since $n-3$ is odd.

For {\bf Part (5)}: Since $n$ is even, again Theorem~\ref{thm2.3} tells us that $f_n$ contains all irreducibles except those indexed by $(n-1,1)$ and $(1^{n}).$ 
From (A) above we see that the product $f_{n} f_2$ may miss the irreducibles indexed by 
$$(n+1,1), (n,2), (n,1^2), (n-1,3) \text{ and } (n-1,2,1),$$
but all these appear in the set of products $\{s_{(n)} f_2, s_{(n-2,1^2)}f_2\}$, and $s_{(n)}, s_{(n-2,1^2)}$ appear in $f_n.$ 
The only other irreducibles possibly missed by the product $f_{n} f_2$ are those in the product $s_{(1^n)} f_2,$ namely
 $$ (2, 1^{n-2}), (3, 1^{n-3}) \text{ and }(1^{n+2}).$$
 Clearly $s_{(3, 1^{n-3})}$ occurs in $s_{(2, 1^{n-4})}\cdot f_2,$ and 
 $f_n$ contains $s_{(2, 1^{n-4})}$ since $n$ is even.
 
 To establish the claim, we now need only show that the irreducible 
 $(2, 1^{n-2})$ appears in $f_{n-2}f_3 f_1.$ But  $f_3=h_3+e_3,$ so  $f_{n-2}f_3$ contains all the irreducibles in the product $f_{n-2} e_3.$ 
 Since $n-2$ is even, it contains $s_{(2, 1^{n-4})}$ and this finishes the argument.
\end{proof}

\begin{lemma}\label{lem2.6} Let $n\geq 5.$ Let $do_n$ denote the number of partitions of $n$ with at least two parts and with all parts odd and distinct.  In  the conjugacy representation 
$\psi_n,$ every irreducible except possibly the sign occurs with multiplicity at least 
$\begin{cases} 4+do_n, & n \text{ odd};\\
                      3+do_n, & n \text{ even}.
\end{cases}$
 This number is at least 5 for odd $n\geq 7,$ and at least 4 for even $n\geq 6.$
 \end{lemma}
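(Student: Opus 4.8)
The plan is to exploit the fact that $\psi_n$ is a sum of \emph{genuine} characters, one per conjugacy class. By Theorem~\ref{thm1.1}(1), $\psi_n=\sum_{\lambda\vdash n}\prod_i h_{m_i(\lambda)}[f_i]$, where the $\lambda$-summand is the Frobenius characteristic of the $S_n$-action on the class indexed by $\lambda$, and each summand is an honest (non-virtual) representation. Consequently the multiplicity of any $s_\nu$ in $\psi_n$ is at least the number of conjugacy classes whose associated character contains $s_\nu$. The whole argument thus reduces to producing, for every irreducible $\chi^\nu$ with $\nu\neq(1^n)$, enough classes each containing $\chi^\nu$: namely $do_n+4$ of them when $n$ is odd and $do_n+3$ when $n$ is even.

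First I would count the \emph{global} classes. By Theorem~\ref{thm2.2} (with $n\neq 4,8$) these are exactly the classes indexed by partitions with at least two distinct odd parts, of which there are $do_n$, each containing every irreducible (including the sign). Next I would adjoin the \emph{almost-global} classes---those containing every irreducible except the sign---read directly off Lemma~\ref{lem2.5}. For odd $n$ these are $(n-1,1)$ (Lemma~\ref{lem2.5}(1), as $n-1$ is even), together with $(n-2,2)$ and $(n-2,1^2)$ (Lemma~\ref{lem2.5}(3)); for even $n\geq 8$ they are $(n-3,2,1)$ (Lemma~\ref{lem2.5}(4)) together with $(n-2,2)$ and $(n-4,3,1)$ (Lemma~\ref{lem2.5}(5) applied to $S_n$). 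In each case these three classes are distinct from one another and from the $do_n$ global ones---each has an even part or a repeated part, so is not of distinct-odd type---and each contains every non-sign $\nu$. This already yields multiplicity $\geq do_n+3$ for every non-sign irreducible, establishing the even bound.

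For odd $n$ the extra $+1$ comes from the class of $n$-cycles: by Theorem~\ref{thm2.3}, when $n$ is odd $f_n$ contains every irreducible except $(n-1,1)$ and $(2,1^{n-2})$, and crucially it \emph{does} contain the sign, so this class is genuinely new relative to the almost-global list. Hence for every $\nu\notin\{(1^n),(n-1,1),(2,1^{n-2})\}$ we obtain $do_n+4$ classes, as required. The two irreducibles $(n-1,1)$ and $(2,1^{n-2})$ are precisely the ones missed by the $n$-cycle class, so the class count leaves them one short at $do_n+3$; for these I would instead appeal to the exact multiplicity formulas of Proposition~\ref{prop2.4}(3),(4) and verify directly that each multiplicity is at least $do_n+4$. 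The surplus is produced by the distinct-odd partitions themselves: in Proposition~\ref{prop2.4}(4) each of the $do_n$ partitions has $\ell\geq 3$ and contributes $\ell-1\geq 2$, with $|T_2\cup T_3|$ supplying the rest; for $(n-1,1)$ one uses that $\sum_{\lambda}(d(\lambda)-1)$ (number of distinct parts summed over all $\lambda$, minus $p(n)$) already dominates $do_n$ plus the many non-rectangular partitions lying outside the distinct-odd family.

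Finally, the clean consequences follow by observing that $do_n\geq 1$ for even $n\geq 6$ (take $\{1,n-1\}$) and for odd $n\geq 9$ (take $\{1,3,n-4\}$), whence $3+do_n\geq 4$ and $4+do_n\geq 5$; the residual small cases---$n=6,8$ on the even side (including the Theorem~\ref{thm2.2} anomaly at $n=8$, where $(5,3)$ omits $(4^2)$ and $(2^4)$) and $n=7$ on the odd side---I would settle directly from the tables of Section~5 or from Proposition~\ref{prop2.4}. The main obstacle is exactly this exceptional bookkeeping: the class-counting is one short for $(n-1,1)$ and $(2,1^{n-2})$, so the stated bound genuinely hinges on Proposition~\ref{prop2.4} for these two, and one must confirm that neither the distinct-odd surplus nor the partial failure of $(5,3)$ at $n=8$ drops the guaranteed minimum below the asserted value.
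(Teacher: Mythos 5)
Your strategy coincides with the paper's own proof of Lemma~\ref{lem2.6}: bound each multiplicity below by the number of conjugacy classes whose characters contain the given irreducible, using the $do_n$ global classes of Theorem~\ref{thm2.2}, the almost-global classes of Lemma~\ref{lem2.5}, the $n$-cycle class via Theorem~\ref{thm2.3}, and Proposition~\ref{prop2.4} for the exceptional irreducibles. Your odd case is correct, and is in fact written more carefully than the paper's: you record that for odd $n$ the class $(n)$ misses both $(n-1,1)$ and $(2,1^{n-2})$ (the paper at this point quotes the even-case exception set of Theorem~\ref{thm2.3} and consequently treats only $(n-1,1)$ separately), and you dispose of both exceptions through Proposition~\ref{prop2.4}(3),(4). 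Your treatment of $do_n\geq 1$ and your deferral of the small and anomalous cases $n=6,7,8$ to the tables also matches the paper (Remark~\ref{rk2.7}).

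The genuine gap is in your even case. You assert that $(n-2,2)$ and $(n-4,3,1)$ are \emph{each} almost-global, citing Lemma~\ref{lem2.5}(5); but that lemma only says that every non-sign irreducible appears in at least one of the two classes \emph{jointly}. Indeed the lemma states explicitly that $f_{n-2}f_2$ omits $(2,1^{n-2})$ (one can also see this directly: by the Pieri rule, $s_{(2,1^{n-2})}$ occurs in $s_\lambda h_2$ only for $\lambda=(1^{n-2})$, which is absent from $f_{n-2}$ since $n-2$ is even), so $(n-2,2)$ is \emph{not} almost-global; and nothing in Lemma~\ref{lem2.5} tells you which irreducibles other than $(2,1^{n-2})$ occur in $(n-4,3,1)$. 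With the correct reading, your three even-case classes guarantee only $2+do_n$ for a generic non-sign irreducible, one short of the claimed $3+do_n$. The missing third unit is the class $(n)$ itself, which you invoke only in the odd case: for even $n$, Theorem~\ref{thm2.3} gives that it contains every irreducible except $(n-1,1)$ and the sign. Adding it closes the count for all $\nu\neq (1^n),(n-1,1)$, and the one remaining irreducible $(n-1,1)$, which $(n)$ misses, must then be handled by Proposition~\ref{prop2.4}(3), exactly as you already do for odd $n$. This repaired argument is precisely the paper's even case.
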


\begin{proof} First let $n$ be odd.  By Lemma~\ref{lem2.5}, the following conjugacy classes contain all irreducibles except the sign: $(n-1,1), (n-2,2), (n-2,1^2).$ Also by Theorem~\ref{thm2.3}, the conjugacy class $(n)$ (or equivalently the symmetric function $f_n$) contains all irreducibles except for the one indexed by $(n-1,1)$ and $(1^n). $  But the irreducible $(n-1,1)$ appears at least $n-1$ times in $\psi_n,$ by Proposition~\ref{prop2.4}.  Thus we have multiplicity at least 4 for each irreducible. 
Since none of the four conjugacy classes listed above is global by Theorem~\ref{thm2.1}, we have a multiplicity of at least $4$ plus the number of 
global classes.

Now let $n$ be even, $n\geq 8.$  (The case $n=6$ can be checked by direct computation.  See, e.g. \cite[Table 1]{Su1}. Then by Theorem~\ref{thm2.3}, the conjugacy class $(n)$ has all irreducibles except for $(n-1,1)$ and $(1^n).$  Also by Lemma~\ref{lem2.5}, $f_{n-3} f_2$ has all irreducibles except for $(1^n).$  Hence so does the conjugacy class $(n-3,2,1).$ Finally this is also true by Lemma~\ref{lem2.5} again, 
for  the sum 
$(f_{n-2} f_2+f_{n-4}f_3 f_1).$  We have accounted for a multiplicity of at least 3 for every irreducible except the sign, in addition to the global classes.

We now show that the number of global classes is at least $\lfloor{\frac{k}{2}}\rfloor$ if $n=2k, 2k+1$.

First let $n=2k \geq 6$ be even.
In this case, applying Theorem~\ref{thm2.2},  we have at least
 $\lfloor{\frac{k}{2}}\rfloor\geq 1$  global conjugacy classes:
$\{(2k-r,r):r=1, 3, \ldots, 2\lfloor{\frac{k}{2}}\rfloor-1\}.$  
If $n=2k+1\geq 9,$ then again there at least 
$\lfloor{\frac{k}{2}}\rfloor-1\geq 1$  global conjugacy classes in the set 
$\{(2k-r,r,1):r= 3, 5,\ldots, 2\lfloor{\frac{k}{2}}\rfloor-1\}$ and 
one more: $(2k-7,5,3).$   \end{proof}

\begin{remark}\label{rk2.7}Tables of the decomposition into irreducibles for $\psi_n, n\leq 10,$ are given in \cite{Su1}.  We point out a misprint in Table 1 of \cite{Su1} for $n=7:$ the fifth entry from the bottom, for the multiplicity of $(3,1^4)$ 
in $\psi_7,$ should be $13,$ not $7$. From this data, the truth of the lemma follows for $n\leq 10.$   It is worth noting that the tables indicate far greater  lower bounds than we have just established, for the multiplicity of the irreducible indexed by $\mu$ when $\mu\neq (1^n), (2,1^{n-2})$.\end{remark}


We begin our analysis by directing our attention to the bottom of the chain, to  examine the representations $\psi_\mu$ for $\mu >(1^n).$   Our argument in this case is  somewhat mysterious.  One  interesting  aspect is  the role played by the following calculation.
\begin{lemma}\label{lem2.8}   The symmetric function $p_2^2+h_2 e_2=h_2^2-e_2h_2+e_2^2$ is Schur-positive.
\end{lemma}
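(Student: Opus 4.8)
The plan is to reduce everything to a single finite computation in degree four. First I would dispose of the stated algebraic identity: since $p_2 = s_{(2)} - s_{(1,1)} = h_2 - e_2$, squaring gives $p_2^2 = h_2^2 - 2 h_2 e_2 + e_2^2$, and adding $h_2 e_2$ immediately yields $p_2^2 + h_2 e_2 = h_2^2 - e_2 h_2 + e_2^2$. Thus the two displayed expressions genuinely coincide, and it remains only to exhibit the Schur-positivity of $h_2^2 - e_2 h_2 + e_2^2$.

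Next I would compute the Schur expansion of each of the three products by Pieri's rule. Multiplying $s_{(2)} = h_2$ by $h_2$ adds a horizontal $2$-strip, giving $h_2^2 = s_{(4)} + s_{(3,1)} + s_{(2,2)}$; applying the standard involution $\omega$, which sends $h_k \mapsto e_k$ and $s_\lambda \mapsto s_{\lambda'}$, then produces $e_2^2 = s_{(2,2)} + s_{(2,1,1)} + s_{(1^4)}$ for free. For the mixed term, multiplying $s_{(1,1)} = e_2$ by $h_2$ (again a horizontal $2$-strip, now added to $(1,1)$) gives $h_2 e_2 = s_{(3,1)} + s_{(2,1,1)}$.

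Finally I would combine the three expansions. The crucial point is that the $s_{(3,1)}$ and $s_{(2,1,1)}$ contributions coming from $h_2^2$ and $e_2^2$ are exactly cancelled by the $-e_2 h_2$ term, leaving $h_2^2 - e_2 h_2 + e_2^2 = s_{(4)} + 2\, s_{(2,2)} + s_{(1^4)}$, which is manifestly Schur-positive. As a built-in consistency check, the whole expression is $\omega$-invariant (interchanging $h_2 \leftrightarrow e_2$ fixes $h_2^2 - e_2 h_2 + e_2^2$), so its Schur expansion must be stable under conjugation $\lambda \mapsto \lambda'$; and indeed $(4)' = (1^4)$ while $(2,2)$ is self-conjugate, confirming the symmetry.

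Since this is a bounded calculation in one low degree, I do not expect a genuine obstacle. The only thing to watch is the bookkeeping of the Pieri coefficients and making sure the off-diagonal terms really cancel with the correct multiplicities; the single conceptual observation behind the positivity is that the negative contribution $-e_2 h_2$ lands entirely inside the support of $h_2^2 + e_2^2$ with small enough multiplicity to leave a nonnegative remainder.
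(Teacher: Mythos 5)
Your proof is correct and is essentially the paper's own argument: the paper likewise notes $p_2=h_2-e_2$ and then computes $h_2^2-e_2h_2+e_2^2=s_{(4)}+s_{(1^4)}+2s_{(2,2)}$ by the Pieri rule. Your explicit Pieri expansions and the $\omega$-invariance check are just a more detailed write-up of the same finite degree-four computation.
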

\begin{proof}  Note that $p_2=h_2-e_2.$ It is straightforward to compute, using  Pieri rules (see e.g. \cite{M}), that $h_2^2-e_2h_2+e_2^2=s_{(4)}+s_{(1^4)}+2s_{(2,2)}.$
\end{proof}
\begin{definition}\label{def2.9} Let $T$ be any subset of integer partitions. Denote by $p_{n,T}$ the sum of power sums $\sum_{\lambda\vdash n, \lambda \in T} p_\lambda.$ 
\end{definition}
\begin{theorem}\label{thm2.10}\cite[Theorem 4.23]{Su1} If $T=\{\lambda\vdash n: \lambda_i=1,2 \text{ for all }i\}$ then $p_{n,T}$ is Schur-positive.  We have $p_{2m+1,T}=p_1 p_{2m, T}$ and 
\begin{equation*} p_{2m, T}
=\sum_{\stackrel{j=1} {j \text{ odd}}}^{m+1} {{m+1}\choose {j}} h_2^{m+1-j} e_2^{j-1}
\end{equation*}
\end{theorem}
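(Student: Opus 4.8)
The plan is to make the sum $p_{2m,T}$ completely explicit, recognise it as a finite geometric series, and let both the closed form and the Schur-positivity fall out of that. First I would enumerate $T$ in degree $2m$: a partition all of whose parts are $1$ or $2$ with exactly $a$ parts equal to $2$ must have $2m-2a$ parts equal to $1$, so the members of $T$ of size $2m$ are precisely $\{2^a 1^{2(m-a)} : 0\le a\le m\}$. Since $p_\lambda=\prod_i p_{\lambda_i}$, this gives
$$p_{2m,T}=\sum_{a=0}^m p_2^{\,a}\,p_1^{\,2(m-a)}=\sum_{a=0}^m p_2^{\,a}\,(p_1^2)^{m-a}.$$

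Next I would pass to the pair $\{h_2,e_2\}$ using the two degree-two identities $p_2=h_2-e_2$ (already used in Lemma~\ref{lem2.8}) and $p_1^2=h_1^2=h_2+e_2$. Setting $x=h_2-e_2$ and $y=h_2+e_2$, the expression becomes the geometric series $\sum_{a=0}^m x^a y^{m-a}$. Multiplying through by $y-x=2e_2$ telescopes cleanly:
$$2e_2\cdot p_{2m,T}=(y-x)\sum_{a=0}^m x^a y^{m-a}=y^{m+1}-x^{m+1}=(h_2+e_2)^{m+1}-(h_2-e_2)^{m+1}.$$
Expanding both powers by the binomial theorem, the even-index terms cancel and the odd-index terms double, so the right-hand side equals $2\sum_{j\text{ odd}}\binom{m+1}{j}h_2^{m+1-j}e_2^{j}$. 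Every surviving term carries a factor $e_2$, so after cancelling the common factor $2e_2$ I obtain exactly the claimed formula $p_{2m,T}=\sum_{j\text{ odd}}\binom{m+1}{j}h_2^{m+1-j}e_2^{j-1}$. I expect the only point needing a word of care to be this cancellation of $2e_2$: I would justify it either by matching terms (each odd-$j$ summand already contains $e_2$ as a factor) or by noting that the ring of symmetric functions is an integral domain in which $e_2$ is not a zero divisor; presenting the telescoping identity rather than a formal quotient sidesteps the issue entirely.

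The Schur-positivity is then immediate, and is really the payoff of having the formula: each summand $\binom{m+1}{j}h_2^{m+1-j}e_2^{j-1}=\binom{m+1}{j}\,s_{(2)}^{m+1-j}s_{(1^2)}^{j-1}$ is a nonnegative integer multiple of a product of Schur functions, and any product of Schur functions is Schur-positive (by the Littlewood--Richardson rule, or since it is the characteristic of an honest representation). Finally, for odd $n=2m+1$, every partition in $T$ has $2m+1-2a$ parts equal to $1$, an odd and hence positive number, so it contains at least one part equal to $1$; deleting a single such part is a bijection onto the size-$2m$ members of $T$ and divides $p_\lambda$ by $p_1$, yielding $p_{2m+1,T}=p_1\,p_{2m,T}$. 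Thus the substance of the argument is the geometric-sum computation, with no genuine obstacle beyond bookkeeping on the index ranges.
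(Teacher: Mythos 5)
Your proof is correct, and there is nothing in the paper to set it against: Theorem~\ref{thm2.10} is stated here with a citation to \cite[Theorem 4.23]{Su1} and no proof is reproduced, so you have supplied a self-contained argument for a result the paper simply imports. Your route is sound at every step: the enumeration $p_{2m,T}=\sum_{a=0}^m p_2^{\,a}(p_1^2)^{m-a}$, the substitutions $p_2=h_2-e_2$ and $p_1^2=h_2+e_2$, and the telescoping identity $2e_2\,p_{2m,T}=(h_2+e_2)^{m+1}-(h_2-e_2)^{m+1}$ are all valid, and the cancellation of $2e_2$ is legitimate either term-by-term (each odd-$j$ binomial summand visibly contains the factor $e_2$) or because the ring of symmetric functions is a polynomial ring, hence an integral domain. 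Schur positivity then follows since $h_2=s_{(2)}$ and $e_2=s_{(1^2)}$, so each summand is a nonnegative multiple of a product of Schur functions, and your bijection deleting a single part equal to $1$ (which exists because the number of $1$'s in a partition of $2m+1$ with parts in $\{1,2\}$ is odd) correctly gives $p_{2m+1,T}=p_1p_{2m,T}$. A small sanity check confirms the formula: for $m=2$ it gives $3h_2^2+e_2^2$, which indeed equals $p_2^2+p_2p_1^2+p_1^4$. What your approach buys is an elementary, purely algebraic derivation needing only degree-two identities and the binomial theorem, in the same spirit as the manipulations the paper itself performs downstream of this theorem (the proof of Theorem~\ref{thm2.11} and Lemma~\ref{lem3.3} repeatedly exploit exactly this expansion in $h_2$ and $e_2$); it also makes structurally transparent why only odd $j$ survive, which the bare statement leaves mysterious.
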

\begin{theorem}\label{thm2.11}  Let $\mu$ be a partition in the interval $[(1^n), (3, 1^{n-3})].$ Then $\psi_\mu$ is Schur-positive.  Equivalently, the following are Schur-positive:
\begin{enumerate}[label=(\arabic*)]
\item $ \psi_{(2^k,1^{n-2k})}=\sum_{i=0}^k p_2^i p_1^{n-2i},$ for $k\leq n/2;$ one has the recurrence $$\psi_{(2^{k+1},1^{n-2(k+1)})}=\psi_{(2^k,1^{n-2k})}+p_2^{k+1} p_1^{n-2(k+1)}, 0\leq k< n/2.$$
\item $\psi_{(3, 1^{n-3})}=p_3p_1^{n-3} +\psi_{(2^k,1^{n-2k})},$ where $k=\lfloor\frac{n}{2}\rfloor.$
\end{enumerate}
\end{theorem}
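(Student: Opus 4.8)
The first thing I would do is pin down the interval. In reverse lexicographic order the partitions $\mu$ with $\mu\le(3,1^{n-3})$ are exactly those whose largest part is at most $2$ — namely the $(2^k,1^{n-2k})$ for $0\le k\le\lfloor n/2\rfloor$ — together with the single partition $(3,1^{n-3})$, which is the smallest partition whose largest part equals $3$. Since $(2^j,1^{n-2j})\le(2^k,1^{n-2k})$ precisely when $j\le k$, this reading of the order immediately gives the two displayed identities and the recurrence, so the problem splits into the Schur-positivity statements (1) and (2).

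For Part (1) I would avoid the recurrence (adding $p_2^{k+1}p_1^{\cdots}$, which is not itself Schur-positive) and instead produce an explicit Schur-positive decomposition that generalizes Theorem~\ref{thm2.10}. Factoring out the common power $p_1^{n-2k}$ gives $\psi_{(2^k,1^{n-2k})}=p_1^{n-2k}\sum_{i=0}^{k}p_2^{\,i}(p_1^2)^{k-i}$, and the inner sum is a geometric-type sum in the two ``variables'' $p_1^2$ and $p_2$. Using $p_1^2=h_2+e_2$ and $p_2=h_2-e_2$ (so $p_1^2-p_2=2e_2$), the identity $\sum_{i=0}^{k}x^{k-i}y^i=(x^{k+1}-y^{k+1})/(x-y)$ specializes to
\[
\psi_{(2^k,1^{n-2k})}=p_1^{\,n-2k}\!\!\sum_{\substack{1\le j\le k+1\\ j\ \mathrm{odd}}}\!\binom{k+1}{j}h_2^{\,k+1-j}e_2^{\,j-1}.
\]
Because $h_2=s_{(2)}$, $e_2=s_{(1^2)}$ and $p_1=s_{(1)}$, every summand is a nonnegative integer multiple of a product of Schur functions, hence Schur-positive; this proves (1) and recovers Theorem~\ref{thm2.10} when $k=\lfloor n/2\rfloor$.

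For Part (2), writing $m=\lfloor n/2\rfloor$ we have $\psi_{(3,1^{n-3})}=p_3p_1^{n-3}+\psi_{(2^m,1^{n-2m})}$, and the only obstruction is the negative term in $p_3=s_{(3)}-s_{(2,1)}+s_{(1^3)}$. Since $(s_{(3)}+s_{(1^3)})p_1^{n-3}$ is Schur-positive, it suffices to show that $\psi_{(2^m,1^{n-2m})}-s_{(2,1)}p_1^{n-3}$ is Schur-positive. Stripping the common factor $p_1^{n-2m}$ (multiplication by $p_1$ is monotone on Schur coefficients by the Pieri rule) reduces this, for $n\ge4$, to the single degree-$2m$ inequality
\[
\Phi_m:=\sum_{i=0}^{m}p_2^{\,i}p_1^{\,2m-2i}\ \ge\ s_{(2,1)}\,p_1^{\,2m-3}\qquad(\text{coefficientwise}),
\]
with the case $n=3$ (where $\mu=(3)$ and $\psi_{(3)}=3s_{(3)}+s_{(2,1)}+s_{(1^3)}$) checked directly.

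I expect this inequality to be the main obstacle. The leading term $(m+1)h_2^m$ of $\Phi_m$ does not on its own dominate the right-hand side — for instance at $\lambda=(2,1^{2m-2})$ one has $\langle h_2^m,s_\lambda\rangle=K_{\lambda,(2^m)}=0$ while $\langle s_{(2,1)}p_1^{2m-3},s_\lambda\rangle=f^{\lambda/(2,1)}=1$ — so the elementary-symmetric terms $e_2^{\,j-1}$ with $j\ge3$ are essential and no part of the decomposition from Part (1) may be discarded. I would recast the target in Kostka-number form: via $s_{(2,1)}=h_2h_1-h_3$ it becomes $\langle\Phi_m,s_\lambda\rangle+K_{\lambda,(3,1^{2m-3})}\ge K_{\lambda,(2,1^{2m-2})}$ for every $\lambda\vdash2m$, where $\langle\Phi_m,s_\lambda\rangle=\sum_i\chi^\lambda(2^i1^{2m-2i})$ is exactly the partial row-sum of the character table that runs through the paper. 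A direct Murnaghan--Nakayama analysis of these partial sums, or an explicit injection of tableaux, should settle it; alternatively one can combine $p_3p_1^{n-3}$ with the $i=0,1$ terms of $\psi_{(2^m,\dots)}$ (using $p_3+p_1^3+p_1p_2=3s_{(3)}+s_{(2,1)}+s_{(1^3)}$) and control the remaining $p_2^2\,\psi_{(2^{m-2},\dots)}$ with Lemma~\ref{lem2.8}. Either way, the surviving arithmetic is where the genuine work lies.
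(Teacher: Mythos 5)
Your reading of the interval and your Part (1) are correct. Factoring out $p_1^{n-2k}$ and summing the geometric series in $x=p_1^2=h_2+e_2$, $y=p_2=h_2-e_2$ gives exactly the expansion $\sum_{j \text{ odd}}\binom{k+1}{j}h_2^{k+1-j}e_2^{j-1}$, which is manifestly Schur-positive; this is precisely Theorem~\ref{thm2.10}, which the paper simply cites from earlier work, so your self-contained derivation of it is fine (indeed a small bonus).

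The genuine gap is in Part (2), where your proposal stops exactly where the theorem's proof begins. Your reduction is valid: it suffices to prove the degree-$2m$ inequality $\Phi_m\geq s_{(2,1)}p_1^{2m-3}$, and your computation at $\lambda=(2,1^{2m-2})$ correctly shows that the dominant term $(m+1)h_2^m$ cannot do the job alone, so the $e_2$-terms must enter. But you then assert that a Murnaghan--Nakayama analysis or a tableau injection ``should settle it'' and that ``the surviving arithmetic is where the genuine work lies'' --- that surviving arithmetic \emph{is} the proof, and you have not supplied it. The paper proves the slightly stronger claim~\eqref{eq2.1} that $\psi_{(2^m)}-h_2p_1^{2m-2}$ is Schur-positive (stronger since $h_2p_1^{n-2}=(s_{(3)}+s_{(2,1)})p_1^{n-3}$), and its entire argument consists of expanding both $\psi_{(2^m)}$ and $p_1^{2m-2}$ in the monomials $h_2^ae_2^b$, splitting the positive part into the three families $P_{(t+)}, Q_{(t)}, R_{(t-)}$, and absorbing each negated term $N_k$ into the Schur-positive combination $\binom{m-1}{k}e_2^{k-1}h_2^{m-k-2}\left(h_2^2-e_2h_2+e_2^2\right)$ via Lemma~\ref{lem2.8}, with a separate parity analysis for the last negated term ($m$ odd versus $m$ even), the even case using the leftover summand $e_2^m\,\mathrm{Odd}(m+1)$ from~\eqref{eq2.2} in an essential way. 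Nothing in your sketch substitutes for this bookkeeping.

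Your fallback suggestion also fails as stated. Extracting the $i=0,1$ terms together with $p_3p_1^{n-3}$ does produce the Schur-positive piece $p_1^{n-3}\psi_3=p_1^{n-3}(3s_{(3)}+s_{(2,1)}+s_{(1^3)})$, but the remainder is $p_2^2\,\psi_{(2^{m-2},1^{n-2m})}$, and $p_2^2$ is not Schur-positive. Lemma~\ref{lem2.8} only rewrites it as $p_2^2=(h_2^2-e_2h_2+e_2^2)-h_2e_2$, leaving a negative term $-h_2e_2\,\psi_{(2^{m-2},1^{n-2m})}$ that must itself be absorbed into the positive part --- which is precisely the delicate regrouping you were hoping to avoid. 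So Part (2) remains unproved in your proposal: the key inequality is stated, motivated, but not established.
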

\begin{proof}
For Part (1): \begin{center} $ \psi_{(2^k,1^{n-2k})}=\sum_{i=0}^k p_2^i p_1^{n-2i}=p_1^{n-2k} \sum_{i=0}^k p_2^i p_1^{2k-2i}
= p_1^{n-2k}p_{2k,T},$\end{center}
 where $T=\{\lambda\vdash 2k: \lambda_i=1,2 \text{ for all }i\}.$ 
But by Theorem~\ref{thm2.10} 
 we know that $p_{2k,T}$ is Schur-positive as a representation of $S_{2k}.$ 

For Part (2): 
Writing $m=\lfloor\frac{n}{2}\rfloor,$  since in reverse lexicographic order, $(3,1^{n-3})$ covers the partition with at most one part equal to 1 and all other parts equal to 2, we have,

$$\psi_{(3, 1^{n-3})}=\psi_{(2^m, 1^{n-2m})}+p_3p_1^{n-3}.$$
Note that $p_3=h_3+e_3 -(h_2 h_1-h_3)=2 s_{(3)} +s_{(1^3)} 
-h_2 p_1.$ 
Hence 
$$\psi_{(3, 1^{n-3})}=\psi_{(2^m, 1^{n-2m})}-h_2p_1^{n-2}
+(2 s_{(3)}+s_{(1^3)}) p_1^{n-3}.$$

We will establish the stronger claim that 
\begin{equation}\label{eq2.1}\psi_{(2^m, 1^{n-2m})}-h_2p_1^{n-2}\end{equation} is Schur-positive.
From Theorem~\ref{thm2.10}, it suffices to assume that $n=2m.$ 
In this case, with $T$ being the set of partitions of $2m$ with parts 1,2,  we have 
\begin{equation}\label{eq2.2}\psi_{(2^m)}=p_{2m, T}
=\sum_{\stackrel{j=1} {j \text{ odd}}}^{m+1} {{m+1}\choose {j}} h_2^{m+1-j} e_2^{j-1}=h_2 V_{2m-2}+e_2^m \ \mathrm{Odd}(m+1),\end{equation}
where the notation $\mathrm{Odd}(n)$ is used to signify 1 if $n$ is odd, and 0 otherwise, and we have set
\begin{equation}\label{eq2.3}V_{2m-2}=\sum_{\stackrel{j=1} {j \text{ odd}}}^{m} {{m+1}\choose {j}} h_2^{m-j} e_2^{j-1}.\end{equation}

From eqns.~\eqref{eq2.2} and~\eqref{eq2.3}, we need to establish the Schur positivity of 
\begin{equation}\label{eq2.4} h_2 (V_{2m-2}-p_1^{2m-2})+e_2^m \ \mathrm{Odd}(m+1).\end{equation}
In fact when $m$ is odd, we will show that $V_{2m-2}-p_1^{2m-2}$ itself is Schur-positive, whereas when $m$ is even, we will need to multiply this by  $h_2$ and examine the entire expression~\eqref{eq2.4} in order to obtain Schur positivity.

Since $p_1^2=h_2+e_2,$  we can write
\begin{equation}\label{eq2.5}p_1^{2m-2}=\sum_{t=0}^{m-1} {m-1\choose t} e_2^t h_2^{m-t-1}.\end{equation}
Also \begin{align*}
V_{2m-2}&=\sum_{\stackrel{j=1} {j \text{ odd}}}^{m} ({{m}\choose {j}}+ {m\choose j-1}) h_2^{m-j} e_2^{j-1}\\
&=\sum_{\stackrel{t=0} {t \text{ even}}}^{m-1} ({{m}\choose {t+1}}+ {m\choose t}) h_2^{m-t-1} e_2^{t}, \quad (\text{setting }t=j-1)\\
&=\sum_{\stackrel{t=0} {t \text{ even}}}^{m-2} ({{m}\choose {t+1}}+ {m\choose t}) h_2^{m-t-1} e_2^{t} + (m+1)e_2^{m-1} \ \mathrm{Odd}(m).\end{align*}
Combining  this with eqn.~\eqref{eq2.5}, we obtain 
\begin{equation*}V_{2m-2}-p_1^{2m-2}=
\sum_{\stackrel{t=0} {t \text{ EVEN}}}^{m-2} e_2^t h_2^{m-1-t} \left({m-1\choose t+1}
+{m-1\choose t } + {m-1\choose t-1}\right)\end{equation*}
\begin{equation}\label{eq2.6} + (m+1)e_2^{m-1} \ \mathrm{Odd}(m)
-\sum_{\stackrel{t=0} {t \text{ ODD}}}^{m-1}  e_2^t h_2^{m-1-t}{m-1\choose t},
\end{equation}
where by convention ${m-1\choose t-1}$ is zero if $t<1.$ 
We will split the first sum in eqn.~\eqref{eq2.6} (over even $t$) into three sums as follows:
$$\sum_{\stackrel{t=0} {t \text{ EVEN}}}^{m-2}P_{(t+)}
+\sum_{\stackrel{t=0} {t \text{ EVEN}}}^{m-2} Q_{(t)}
+\sum_{\stackrel{t=2} {t \text{ EVEN}}}^{m-2}R_{(t-)},$$
where, for $0\leq t\leq m-2,$  $$P_{(t+)}
= e_2^t h_2^{m-1-t} {m-1\choose t+1},$$
$$Q_{(t)}= e_2^t h_2^{m-1-t} {m-1\choose t},$$ 
and for $2\leq t\leq m-2,$
$$R_{(t-)}= e_2^t h_2^{m-1-t}  {m-1\choose t-1}.$$
Next consider the negated terms in eqn.~\eqref{eq2.6}.  For these we write, for odd $k,$ 
$1\leq k\leq m-1,$  $$N_k=e_2^k h_2^{m-1-k}{m-1\choose k}.$$ 

Our goal is  to absorb every negated term $N_k$ into a Schur-positive term.  We now describe a judicious  grouping which will allow us to accomplish this.

Collect the terms in $V_{2m-2}-p_1^{2m-2}$ as follows:
$$(P_{(0+)}-N_1+R_{(2-)})+ (P_{(2+)}-N_3+R_{(4-)})+\ldots +(P_{(t+)}-N_{t+1}+R_{(t+2)-})+\ldots,$$
where $t$ is even.

For each odd $k=2j-1, 1\leq k\leq m-3,$ we have
$$P_{((2j-2)+)}-N_{2j-1}+R_{((2j)-)}=P_{((k-1)+)}-N_k+R_{((k+1)-)}$$
\begin{align}\label{eq2.7}
&={m-1\choose k} e_2^{k-1} h_2^{m-k-2}(h_2^2-e_2h_2+e_2^2), \quad 1\leq k\leq m-3.\end{align}
But this is Schur-positive by Lemma~\ref{lem2.8}. 
This absorbs the negative terms $N_k$ for $k$ odd, $k\leq m-3,$ into Schur-positive expressions.  Looking at eqn.~\eqref{eq2.6}, there is only one more negated term to investigate.

Suppose  $m$ is {\bf odd}, so that $k=m-2$ is odd, and thus $N_{m-2}$ is the last negated summand in~eqn.\eqref{eq2.6}, the only one not taken care of in eqn.~\eqref{eq2.7}.
Group the terms as before, and noting that  
$P_{((m-3)+)}$ was NOT used in the groupings of eqn.~\eqref{eq2.7}, we have 
\begin{align*}&P_{((m-3)+))}- N_{m-2} +e_2^{m-1} (m+1)\\
&={m-1\choose m-2} e_2^{m-3}h_2^2-{m-1\choose m-2} e_2^{m-2} h_2+ (m+1) e_2^{m-1}\\
 &=(m-1)e_2^{m-3}  (h_2^2-e_2 h_2 +e_2^2) +2 e_2^{m-1}
\end{align*}
and this is again Schur-positive by Lemma~\ref{lem2.8}.  (Admittedly this is something of a miracle.)

Next suppose  $m$ is {\bf even}, so that $k=m-1$ is odd; then  $N_{m-1}$ is the last negated summand in eqn.~\eqref{eq2.6}, and the only one not absorbed in eqn.~\eqref{eq2.7}.
Now we have (since $P_{((m-2)+)}$ was not used in any of the  groupings in eqn.~\eqref{eq2.7}):
$$P_{((m-2)+)}-N_{m-1}={m-1\choose m-1} e_2^{m-2} h_2 -{m-1\choose m-1} e_2^{m-1} =e_2^{m-2}(h_2-e_2).$$
While this is not itself Schur-positive, from eqn.~\eqref{eq2.4} we can multiply by $h_2,$ and then we have (again somewhat fortuitously), since $m+1$ is odd,
 $$ h_2(P_{((m-2)+}-N_{m-1})+e_2^m\ \mathrm{Odd}(m+1)
=e_2^{m-2}(h_2^2-h_2 e_2+e_2^2),$$ 
and this is Schur-positive as before.   

By eqn.~\eqref{eq2.4}, this completes the argument, in which Lemma~\ref{lem2.8} clearly played a crucial role.  \end{proof}

Next we examine the chain from the top down.  Here the arguments are more direct, but also computationally technical. Our strategy for establishing Schur positivity will be to show that  
in the Schur expansion of the sum $\sum_{\nu\vdash n:\mu<\nu\leq (n)} p_\nu,$ i.e. starting from the partition $(n)$ and moving down the chain, the positive multiplicities of the irreducibles never exceed those in $\psi_n.$ For the cases we consider, we are able to show that, with the exception of the trivial representation, which clearly occurs as many times as the number of partitions in the interval $[\mu, (n)],$ this sum has multiplicities at most 4.  This allows us to apply the lower bounds for the multiplicities in $\psi_n$ developed earlier in Lemma~\ref{lem2.6}.
The Schur function expansion of the product $p_np_m$ figures prominently in this analysis.  

In the proofs that follow,  for simplicity and clarity we write $\lambda$ for the Schur function $s_\lambda$ indexed by $\lambda.$  The context should make clear when $(n-2,2)$  indicates the Schur function $s_{(n-2,2)}$ rather than the partition itself.  Also for any statement $S,$  $\delta_S$ denotes the value 1 if and only if $S$ is true,  and is zero otherwise.

Recall \cite{M, St4EC2} that $\omega$ is the involution defined on the ring of symmetric functions  which sends $h_n$ to $e_n.$ Thus $\omega$ takes the Schur function indexed by a partition $\lambda$ to the Schur function indexed by the conjugate partition $\lambda^t.$ (In the character table of $S_n$, it corresponds to tensoring with the sign representation.)  


\begin{lemma}\label{lem2.12} Let 
 $n\geq m$ and $4\geq m\geq 1;$ if $n=m$ assume $m\neq 2.$ Then the Schur function expansion of the product $p_n p_m$ has only the coefficients $0, \pm 1.$ More precisely,  one has the following expansion into distinct irreducibles:
 \begin{align*}&D_{n,m}\\
 &+[\alpha(n,m)+\delta_{m\geq 2}\beta(n,m)+\delta_{m=3}\gamma(n,3)+\delta_{m=4}\gamma(n,4)]\\
 &+ (-1)^{n+m}\omega\left( [\alpha(n,m)+\delta_{m\geq 2}\beta(n,m) 
 +\delta_{m=3}  \gamma(n,3)
 +\delta_{m=4}\gamma(n,4)]\right); \end{align*}
 where 
 \begin{align*}& D_{n,m}=\sum_{s=0}^{m-1} \sum_{t=0}^{n-m-2}(-1)^{t+1} (n-t-s-1, m-s+1, 2^s, 1^t),\\
 &\alpha(n,m)=\sum_{r=0}^{m-1} (-1)^r (n-r+m, 1^r) ,  \quad
 \beta(n,m)=\sum_{s=0}^{\bf m-2}  (-1)^s (n,m-s,1^s),\\
 &\gamma(n,3)=(n-1,2^2), \text{ and }
 \gamma(n,4)=(n-1,3,2)-(n-1,2,2,1)+(n-2,2^3).
 \end{align*}
 The number of irreducibles appearing in the expansion is 
 $$m(n-m+1)+2(\delta_{m\geq 2}(m-1)+\delta_{m=3}+3\delta_{m=4}).$$
%
 \end{lemma}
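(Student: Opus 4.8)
The plan is to compute the coefficient of $s_\lambda$ in $p_np_m$ directly as the irreducible character value $\chi^\lambda_{(n,m)}$, using the characteristic map $p_\mu=\sum_\lambda \chi^\lambda_\mu\, s_\lambda$ and the Murnaghan--Nakayama rule \cite{M,St4EC2}. First I would expand the single power sum into hooks, $p_m=\sum_{r=0}^{m-1}(-1)^r s_{(m-r,1^r)}$, so that $p_np_m=\sum_{r=0}^{m-1}(-1)^r\,\bigl(p_n\cdot s_{(m-r,1^r)}\bigr)$. Since $m\le 4$ there are at most four hooks $\eta=(m-r,1^r)$ to treat, and for each the task reduces to describing $p_n\cdot s_\eta=\sum_\lambda(-1)^{\mathrm{ht}(\lambda/\eta)}s_\lambda$, the sum over all $\lambda\supset\eta$ such that $\lambda/\eta$ is a border strip of size $n$.

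The main computational device I would use is the abacus (beta-number) encoding of partitions \cite{M}: fixing a number $\ell$ of beads, adjoining an $n$-border-strip corresponds to advancing a single bead by $n$ positions into an empty slot, the sign being $(-1)$ to the number of beads jumped, and the leg length of the strip being exactly that jump count. Because each $\eta$ is a small hook, its beta-set consists of one or two low blocks together with a single isolated high bead, and because $n$ is large the advanced bead always lands in a fresh slot; this makes the enumeration completely explicit. Advancing the high bead (and the tops of the low blocks) produces the long hooks and near-hooks, giving the terms $\alpha(n,m)$, $\beta(n,m)$, and for $m=3,4$ the small corrections $\gamma(n,3),\gamma(n,4)$; advancing a bead out of a low block produces double hooks. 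Writing $t$ for the number of beads remaining above the vacated slot and $s$ for the index $r$ of the originating hook, a short calculation identifies these double hooks as exactly $(n-t-s-1,\,m-s+1,\,2^s,\,1^t)$ with sign $(-1)^{t+1}$, which is precisely $D_{n,m}$; I verified the case $\eta=(m)$ ($s=0$) gives $(n-t-1,m+1,1^t)$ with sign $(-1)^{t+1}$, matching the $s=0$ slice of $D_{n,m}$.

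To halve the work I would invoke $\omega(p_k)=(-1)^{k-1}p_k$, whence $\omega(p_np_m)=(-1)^{n+m}p_np_m$; thus the entire expansion is invariant under $s_\lambda\mapsto(-1)^{n+m}s_{\lambda^t}$. This forces the hook/near-hook block $[\alpha+\delta_{m\ge2}\beta+\delta_{m=3}\gamma+\delta_{m=4}\gamma]$ to reappear as its $(-1)^{n+m}\omega$-image (the third line of the statement), so I need only compute that block once. It then remains to check that conjugation of double hooks carries $D_{n,m}$ to $(-1)^{n+m}D_{n,m}$, so that $D_{n,m}$ is the self-paired part and is not doubled. Finally I would confirm that each listed shape occurs exactly once, so the expansion is \emph{reduced}, and tally the irreducibles to recover the count $m(n-m+1)+2\bigl(\delta_{m\ge2}(m-1)+\delta_{m=3}+3\delta_{m=4}\bigr)$.

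The hard part will be the sign-and-multiplicity bookkeeping where the families overlap or where the ``large $n$'' genericity fails. Near-square shapes force the special corrections $\gamma(n,3)$ and especially $\gamma(n,4)$ (three extra irreducibles), arising from the hooks $(2,1)$ and $(2,1,1)$ whose beta-sets have beads close together; there the naive claim that the advanced bead lands in an empty slot must be checked by hand, and contributions from distinct hooks $\eta$ can coincide on the same $\lambda$ and must be combined. One must also handle the partition-validity constraints at the ends of the ranges of $s$ and $t$ and the degenerate diagonal case $n=m$ (which is why $m=2$ is excluded). Guaranteeing that the contributions from the several hooks, together with the $\omega$-image, never stack up — so that every coefficient lands in $\{0,\pm1\}$ and each surviving $s_\lambda$ is counted exactly once — is the crux of the argument.
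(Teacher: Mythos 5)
Your plan is workable and would prove the lemma, but it organizes the Murnaghan--Nakayama computation dually to the paper, so the comparison is worth making explicit. The paper expands the \emph{large} power sum into hooks, $p_n=\sum_{r=0}^{n-1}(-1)^r s_{(n-r,1^r)}$, attaches border strips of size $m\le 4$ to each of these $n$ hooks, sorts the outcomes into pictorial families (its Figures 1a--5f), and collapses two telescoping families of hooks by hand; the involution $\omega$ enters only at the very end, as notation for packaging conjugate pairs of figures, each of which was computed directly. You instead expand the \emph{small} power sum into its at most four hooks $\eta=(m-r,1^r)$ and add $n$-strips to each, with the abacus doing the bookkeeping, and you use the equivariance $\omega(p_np_m)=(-1)^{n+m}p_np_m$ as a genuine labor-saving device rather than as notation. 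Your route buys a shorter and more uniform case analysis: there are four base shapes instead of $n$, the corrections $\gamma(n,3)$ and $\gamma(n,4)$ are not exceptional pictures but ordinary bead moves out of the top run of $\eta=(1^3),(2,1^2),(1^4)$, and your verified $s=0$ slice of $D_{n,m}$ is correct and matches the paper's Figure~3 terms. The price is two obligations the paper's proof never incurs, discussed below.

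First, you must check that $D_{n,m}$ is $\omega$-self-paired; this is true and easy, since conjugation carries the $(s,t)$ term to the $(m-1-s,\,n-m-2-t)$ term with the correct sign. Second, you must check that the hook/near-hook block is disjoint from its own $\omega$-image; this is exactly the ``crux'' you flag, and it is a real constraint: it holds whenever $n>m$, but on the diagonal it genuinely fails --- one computes $\langle p_3p_3,s_{(3,2,1)}\rangle=-2$ and $\langle p_4p_4,s_{(4,4)}\rangle=2$, so the lemma's own ``$0,\pm1$'' and ``distinct irreducibles'' claims are in fact false at $n=m=3,4$ (the displayed formula remains valid, with block and image sharing terms). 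Your symmetry-halving, like every later application of the lemma in the paper, should therefore be run under the hypothesis $n>m$. Finally, one correction to your abacus picture: it is not true that for large $n$ ``the advanced bead always lands in a fresh slot.'' For \emph{every} hook $\eta$ --- not only the near-square shapes you single out --- the landing sites must be classified (the gap between the two runs, the $m-r-1$ empty slots between the top run and the isolated high bead, and the slots above it), and the blocked landings, notably the one squarely on the high bead, are precisely what truncate the ranges, e.g.\ $t\le n-m-2$ in $D_{n,m}$. With these adjustments your outline is complete, and arguably cleaner than the paper's case-by-case figures.
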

 
 \begin{proof}  Note that these definitions imply that 
 $$(-1)^{n+m}\omega(\alpha(n,m))=\sum_{t=n}^{n+m-1} (-1)^{t-1} (n-t+m, 1^t)$$
 and 
 $$(-1)^{n+m}\omega(\beta(n,m))=\sum_{s=\bf 1}^{m-1} (-1)^{n-s-1}(m-s+1, 2^s, 1^{n-s-1}).$$
   We begin with the well-known expansion of $p_n$ into Schur functions of hook shape (a special case of the Murnaghan-Nakayama rule):
 $$p_n=\sum_{r=0}^{n-1} (-1)^r (n-r, 1^{r}), n\geq 2.$$
 
The Murnaghan-Nakayama rule says the Schur functions  in the product $p_n p_m$ are indexed by partitions obtained by attaching border strips (or rim hooks) of size $m$ to each of the above hooks, with sign $(-1)^s $ where $s$ is one less than the number of rows occupied by the border strip.  (See \cite{M} or \cite{St4EC2}.)    We enumerate the disjoint possibilities in the figures below.   Note that we have excluded the  partition $(1^m)$ (respectively, $(m)$) from  Figure 1a because it is counted in Figure 2b (respectively, Figure 2a).  In particular  Figures 1a-1b need to be considered separately only if $m\geq 2.$  (When $m=1,$ Figure 1a is included as the special case of Figure 2b  for $r=0,$ and similarly Figure 1b is the $r=n-1$ case of Figure 2a.) Figures 2-3 are possible configurations for all $m\geq 1,$  and Figures 4a-4b, 5a-5d can occur only if $m=3,4$ respectively.

\vskip .1in

\ytableausetup{centertableaux}
\begin{center} \begin{tiny}
\begin{ytableau}
  \phantom{X}&  & &  &  & \\
X  &X  &\ldots & X \\
X\\
\ldots \\
X
\end{ytableau}\end{tiny} 
\begin{small} \begin{tiny}{\bf Figure 1a:} $\mu=(n),$ with hook $(m-s, 1^s), 0\leq s\leq {\bf m-2},$   in row 2. \end{tiny} \\The contribution to $p_np_m$ here is $(-1)^s (n,m-s,1^s),$ if $m\geq 2.$\end{small}
\end{center}
\vskip .1in
\ytableausetup{centertableaux}
\begin{center} \begin{tiny}
\begin{ytableau}
  \phantom{X} &X &\ldots &X\\
 &X \\
  &\ldots \\
 &X\\
 \\
 \\
\end{ytableau}\end{tiny} 
\begin{small} \begin{tiny}{\bf Figure 1b:} $\mu=(1^n),$ with hook $(m-s, 1^s), {\bf 1}\leq s\leq m-1, $   in column 2. \end{tiny} \\ The contribution to $p_np_m$  is $(-1)^{n-1+s} (m-s+1,2^s, 1^{n-s-1}),$  if $m\geq 2.$\end{small}
\end{center}

\vskip .1in
\ytableausetup{centertableaux}
\begin{center} \begin{tiny}
\begin{ytableau}
  \phantom{X}&  & &  &  & &X &X &\ldots & X \\
  \\
  \\
 \\
\end{ytableau}\end{tiny} 
\begin{small} \begin{tiny}{\bf Figure 2a:} $\mu=(n-r, 1^r), 0\leq r\leq n-1,$ with horizontal strip of size $m.$\end{tiny} \\The contribution to $p_np_m$ here is $(-1)^{r} (n-r+m, 1^r), 0\leq r\leq n-1.$
\end{small}
\end{center}

\vskip .1in
\ytableausetup{centertableaux}
\begin{center} \begin{tiny}
\begin{ytableau}
  \phantom{X}&  & &  &  &  \\
  \\
  \\
 X\\
 X\\
 \ldots\\
 X\\
\end{ytableau}\end{tiny} 
\begin{small} \begin{tiny}{\bf Figure 2b:} $\mu=(n-r, 1^r), 0\leq r\leq n-1,$ with vertical strip of size $m.$\end{tiny} \\ The contribution to $p_np_m$ here is $(-1)^{r+m-1} (n-r, 1^{r+m}), 
0\leq r \leq n-1.$ \end{small}
\end{center}
\vskip .1in

\ytableausetup{centertableaux}
\begin{center} \begin{tiny}
\begin{ytableau}
  \phantom{X}&  & &  &  &  &\\
  &X &X &\ldots & X\\
  &X \\
 &\ldots\\
 &X\\
  \\
  \\
\end{ytableau}\end{tiny} 
\begin{small} \begin{tiny}{\bf Figure 3:} $\mu=(n-r, 1^r), r\geq 1, n-r-1\geq 1$ with hook $(m-s, 1^s), 0\leq s\leq m-1,$ attached. \end{tiny}\\ The contribution to $p_np_m$  is  $(-1)^{r+s} (n-r, m-s+1, 2^s,1^{r-1-s}), 0\leq s\leq m-1, 1\leq r\leq n-2.$\end{small}
\end{center}

If $m=3$ we have a conjugate pair of additional configurations: 
\vskip .1in

\ytableausetup{centertableaux}
\begin{center} \begin{tiny}
\begin{ytableau}
  \phantom{X}&  & &  &  & \\
  &X\\
    X & X \\
\end{ytableau}\end{tiny} 
\begin{small} \begin{tiny}{\bf Figure 4a:} $\mu=(n-1,1),$ with rim hook of 
size $m=3. $\end{tiny}\\  The contribution to $p_np_3$ is $(-1)^2 (n-1,2,2).$ \end{small}
\end{center}
\vskip .1in
\ytableausetup{centertableaux}
\begin{center} \begin{tiny}
\begin{ytableau}
  \phantom{X} & &X\\
 &X &X\\
  \\
 \\
 \\
 \\
\end{ytableau}\end{tiny} 
 \begin{small}\begin{tiny} {\bf Figure 4b:} $\mu=(2,1^{n-2}),$ with rim hook of size $m=3.$\end{tiny} \\ The contribution to $p_np_3$  is $(-1)^{n-2}\cdot (-1) (3,3, 1^{n-3}).$  
 \end{small}
\end{center}
%
If $m=4$ we have six additional  configurations, which we list in successive conjugate pairs: 
%
\ytableausetup{centertableaux}
\begin{center} \begin{tiny}
\begin{ytableau}
  \phantom{X}&  & &X   \\
                   &X &X &X\\
     \\
     \\
     \\
\end{ytableau}\end{tiny} 
\begin{small} \begin{tiny}Figure 5a: $\mu=(3,1^{n-3}),$ with rim hook of 
size $m=4. $\end{tiny}\\  The contribution to $p_np_4$ is $(-1)^{n-3}
\cdot (-1) (4,4, 1^{n-4}).$ \end{small}
\end{center}
\vskip .1in
\ytableausetup{centertableaux}
\begin{center} \begin{tiny}
\begin{ytableau}
  \phantom{X} & & \ldots &\\
 &X \\
 &X  \\
X &X 
\end{ytableau}\end{tiny} 
 \begin{small}\begin{tiny}{\bf Figure 5b:} $\mu=(n-2,1^{2}),$ with rim hook of size $m=4.$\end{tiny} \\ The contribution to $p_np_4$  is $(-1)^{2}\cdot (-1)^2 (n-2,2,2,2).$  
 \end{small}
 \end{center}
\vskip .1in
\ytableausetup{centertableaux}
\begin{center} \begin{tiny}
\begin{ytableau}
  \phantom{X}&  & &  &  & \\
  &X &X\\
    X & X \\
\end{ytableau}\end{tiny} 
\begin{small}\begin{tiny}{\bf  Figure 5c:} $\mu=(n-1,1),$ with rim hook of 
size $m=4. $\end{tiny}\\  The contribution to $p_np_4$ is $(-1)^2 (n-1,3,2).$ \end{small}
\end{center}
\vskip.1in
\ytableausetup{centertableaux}
\begin{center} \begin{tiny}
\begin{ytableau}
  \phantom{X} & &X \\
  &X &X\\
 &X  \\
 \\
 \\
 \\
\end{ytableau}\end{tiny} 
 \begin{small}\begin{tiny}{\bf Figure 5d:} $\mu=(2,1^{n-2}),$ with rim hook of size $m=4.$\end{tiny} \\ The contribution to $p_np_4$  is $(-1)^{n-2}\cdot (-1)^2 (3,3,2, 1^{n-4}).$  
 \end{small}
\end{center}

\vskip .1in
\ytableausetup{centertableaux}
\begin{center} \begin{tiny}
\begin{ytableau}
  \phantom{X}&  & &  &  & \\
   &X\\
    X & X \\
    X
\end{ytableau}\end{tiny} 
\begin{small}\begin{tiny}{\bf  Figure 5e:} $\mu=(n-1,1),$ with rim hook of 
size $m=4. $\end{tiny}\\  The contribution to $p_np_4$ is $(-1)\cdot (-1)^2 (n-1,2,2,1).$ \end{small}
\end{center}
\vskip .1in
\ytableausetup{centertableaux}
\begin{center} \begin{tiny}
\begin{ytableau}
  \phantom{X} & &X &X\\
 &X &X\\
  \\
 \\
 \\
 \\
\end{ytableau}\end{tiny} 
 \begin{small}\begin{tiny}{\bf Figure 5f:} $\mu=(2,1^{n-2}),$ with rim hook of size $m=4.$\end{tiny} \\ The contribution to $p_np_4$  is $(-1)^{n-2}\cdot (-1) (4,3, 1^{n-4}).$  
 \end{small}
\end{center}

The sum of hooks in Figures 2a-2b collapses as follows:
\begin{align*} &\sum_{r=0}^{n-1} (-1)^r (n-r+m, 1^r) +
\sum_{r=0}^{n-1} (-1)^{r +m-1}(n-r, 1^{r+m});  \quad (*)\\
&\text{if }n-1\geq m, \text{ we can split the first sum:}\\
&=\sum_{r=0}^{m-1} (-1)^r (n-r+m, 1^r) +\sum_{r=m}^{n-1} (-1)^r (n-r+m, 1^r) 
+\sum_{t=m}^{n+m-1} (-1)^{t-1}(n+m-t, 1^{t})\\
&=\sum_{r=0}^{m-1} (-1)^r (n-r+m, 1^r) +(\sum_{r=m}^{n-1} (-1)^r (n-r+m, 1^r) +\sum_{t=m}^{n+m-1} (-1)^{t-1}(n+m-t, 1^{t}))\\
&=\sum_{r=0}^{m-1} (-1)^r (n-r+m, 1^r) +\sum_{t=n}^{n+m-1} (-1)^{t-1} (n-t+m, 1^t).
\end{align*}
If $n=m,$ this latter expression coincides with $(*)$ above, and is thus valid for all $n\geq m.$ 

%
Note that in the double hook 
of Figure 3, we must have $n-r\geq m-s+1\geq 1$ and similarly $r>s\geq 0.$ This gives $0\leq s\leq m-1$ and $s<r<n-m+s$.
Hence Figure 3 contributes the sum of double hooks
\begin{align*}
D_{n,m}&= \sum_{s=0}^{m-1} \sum_{\stackrel{r}{s<r<n-m+s}}
(-1)^{r+s} (n-r, m-s+1, 2^s,1^{r-1-s})\\
&=\sum_{s=0}^{m-1} \sum_{t=0}^{n-m-2}(-1)^{t+1} (n-t-s-1, m-s+1, 2^s, 1^t),  \\
&\text{where we have put $t=r-1-s$.}
\end{align*}
%
%
Figures 1a-1b contribute the sum
\begin{equation*} \delta_{m\geq 2}\left(\sum_{s=0}^{\bf m-2}  (-1)^s (n,m-s,1^s)
+\sum_{s=\bf 1}^{m-1} (-1)^{n-s-1}(m-s+1, 2^s, 1^{n-s-1})\right);
\end{equation*}
putting $\beta(n,m)$ for the first sum above, we see that this can by rewritten as 
 $$\delta_{m\geq 2}\left(\beta(n,m) +(-1)^{n-m}\omega(\beta(n,m)\right).$$
 (Check that $\omega(n, m-s, 1^s)=(s+2, 2^{m-s-1}, 1^{n-m+s}),$ and make the substitution $t=m-s-1$ to get the second sum multiplied by $(-1)^{n-m}$.)
 
 Figures 2a-2b contribute
\begin{equation*}+\sum_{r=0}^{m-1} (-1)^r (n-r+m, 1^r) +\sum_{t=n}^{n+m-1} (-1)^{t-1} (n-t+m, 1^t);\end{equation*} 
putting $\alpha(n,m)$ for the first sum of $m$ hooks above, we that this can be rewritten as 
$$\alpha(n,m)+ (-1)^{n+m} \omega(\alpha(n,m)).$$

Putting $\gamma(n,3)=(n-1,2^2),$ the contribution of Figures 4a-4b is seen to be 
$$\delta_{m=3}  \left(\gamma(n,3) +(-1)^{n+m} \omega(\gamma(n,3))\right),  \text{ since, when }m=3,  (-1)^{n-1}=(-1)^{n+m}$$
and similarly setting $\gamma(n,4)=(n-1,3,2)-(n-2,3,2,1)+(n-2,2^3),$ the contribution   of Figures 5a-5f is
$$\delta_{m=4}\left(\gamma(n,4)+(-1)^{n+m} \omega(\gamma(n,4))\right) 
\text{ again since, when }m=4,  (-1)^{n}=(-1)^{n+m}.$$
This completes the proof.
 \end{proof}

We now specialise this lemma to the values $m\leq 4.$  In what follows it will be convenient to write partitions of $n$ as $(*, \mu),$ where $*$ will indicate a single part equal to $n-|\mu|,$ and $\mu$ is a partition whose largest part does not exceed the part indicated by $*.$ For example, $(*, 2,2,1^t)$ means the partition $(n-4-t, 2,2,1^t).$
\begin{lemma}\label{lem2.13} One has the following Schur function expansions: 
\begin{enumerate}[label=(\arabic*)] \item 
$p_n=(n)+(-1)^{n-1}(1^n)+\sum_{r=1}^{n-2} (-1)^r (*, 1^{r})$ for $n\geq 2$
(the summation is nonzero if and only if $n\geq 3$);
\item $p_1p_{n-1} $ 
$=(n)+(-1)^n (1^n) +
\sum_{r=0}^{n-4} (-1)^{r-1} (*, 2,1^r)$ for $n\geq 3$
( note the summation is nonzero if and only if $n\geq 4$);
\item \noindent   $  p_{n-2} p_2$  (for $n\geq 5$)
\begin{align*}&=(n)-(n-1,1)+(n-2,2) +(-1)^n[(1^n)-(2,1^{n-2})+(2^2,1^{n-4})]\\
& +\sum_{t=0}^{n-6} (-1)^{t-1}(*, 3, 1^t)
+\sum_{t=0}^{n-6} (-1)^{t-1}(*, 2,2,1^t).
\end{align*}
(Note that the summations are nonzero if and only if $n\geq 6$).
\item $p_{n-2} h_2  $  (for $n\geq 5$) 
\vskip.07in
$=(n) +(-1)^n (2^2, 1^{n-4}) 
+ (-1)^{n-1} (2, 1^{n-2})
+\sum_{r=0}^{n-6}(-1)^{r+1} (*,3,1^{r}).$
\vskip.07in
 \item $p_{n-2} p_1^2  $  (for $n\geq 5$)
\begin{align*}&=(n) +(n-1,1)-(n-2,2)
 -(-1)^n (1^n) +(-1)^n (2^2, 1^{n-4}) +(-1)^{n-1}(2,1^{n-2})\\
&+\sum_{r=0}^{n-6}(-1)^{r+1} (*,3,1^{r})
 +\sum_{t=0}^{n-6} (-1)^{t}(*, 2,2,1^t)\end{align*}
\item $p_{n-3} p_3$  (for $n\geq 6$) 
\begin{align*}
&=[(n)-(n-1,1)+(n-2,1^2)+(n-3,3)-(n-3,2,1)]+(n-4,2^2)\\
&+ (-1)^n[(1^n)-(2,1^{n-2})+(3, 1^{n-3})+(2^3,1^{n-6})-(3,2,1^{n-5})]+(-1)^{n-4}(3^2,1^{n-6})\\
& +\sum_{t=0}^{n-8} (-1)^{t-1}(*, 4, 1^t) 
+\sum_{t=0}^{n-8} (-1)^{t-1} (*, 3,2,1^t)+\sum_{t=0}^{n-8} (-1)^{t-1} (*, 2,2,2,1^t) 
\end{align*}
\item $p_{n-3} p_2 p_1$
\begin{align*}
&=(n)-(n-2,1^2)+(n-3,2,1)-(n-4,2^2)\\
&+(-1)^{n-1}[(1^n)-(3,1^{n-3})+(3,2,1^{n-5})-(3^2, 1^{n-6})] \\
&+\sum_{t=0}^{n-8} (-1)^{t-1} (*, 4, 1^t)
+\sum_{r=0}^{n-8} (-1)^{r} (*, 2,2,2,1^{r}) 
\end{align*}
\item $p_{n-3} h_2 p_1$ 
\begin{align*}&= (n)+(n-1,1)-(n-3,3) +(-1)^{n-2}(2,1^{n-2})+(-1)^{n-1}(2^3,1^{n-6})\\
& +(-1)^{n-1} (3,2,1^{n-5}) +(-1)^n (3,1^{n-3}) +(-1)^{n-6}(3^2,1^{n-6})\\
&+\sum_{r=0}^{n-8}(-1)^{r+1} (*, 4, 1^r)+\sum_{t=0}^{n-8} (-1)^t (*,3,2,1^t)
\end{align*}
\item $p_{n-4}p_4$
\begin{align*}&=(n)-(n-1,1)+(n-2,1^2)-(n-3,1^3)\\
&+(*,4)-(*,3,1)+(*,2,1^2)+(*,3,2)-(*,2^2,1)+(*,2^3)\\
&+(-1)^n[(1^n)-(2,1^{n-2})+(3,1^{n-3})-(4,1^{n-4})+(2^4,1^{n-8})\\
&\phantom{+(-1)^n[}-(3,2^2,1^{n-7})+(4,2,1^{n-6})+(3^2,2,1^{n-8})-(4,3,1^{n-7})+(4^2,1^{n-8})]\\
&+\sum_{t=0}^{n-10}(-1)^{t+1}\left[(*,5,1^t)+(*,4,2,1^t)+(*,3,2^2,1^t)+(*,2,2^3,1^t)\right]
\end{align*}
 
\item $p_{n-4} p_3 p_1$ 
\begin{align*}&=(n)-(n-2,2)+(n-3,3)+(n-3,1^3)-(n-4,2,1^2)+(n-5,2^2,1)\\
&+(-1)^{n-1}[(1^n)-(2^2,1^{n-4})+(2^3,1^{n-6})+(4,3,1^{n-7})-(4,2,1^{n-6})+(4,1^{n-4})]\\
&+(-1)^n(4^2,1^{n-8})
-(n-6,2^3)\\
&+\sum_{t=0}^{n-10} (-1)^{t-1}(*,5,1^t)+
\sum_{t=0}^{n-9} (-1)^{t-1}(*,3^2,1^t)+\sum_{t=1}^{n-9} (-1)^{t-1}(*,2^4,1^{t-1}) 
\end{align*}

\item $p_{n-4} p_2 p_1^2$ 
\begin{align*}&=(n)+(n-1,1)-(n-2,1^2)-(n-3,1^3) -(n-4,4)\delta_{n\ge 9} +(n-4,3,1)\\ 
& +(n-4,2,1^2)-(n-5,3,2) -(n-5,2^2,1)+(n-6,2^3) \\
& +(-1)^n[(1^n)+(2,1^{n-2})-(2^4,1^{n-8}) -(3,1^{n-3}) +(3,2^2,1^{n-7})-(3^2,2,1^{n-8}) ]\\
& +(-1)^n [(-(4,1^{n-4})+(4,2,1^{n-6})-(4,3,1^{n-7}) +(4^2,1^{n-8}) ]\\
& +\delta_{n\ge 10}\sum_{t=0}^{n-10} (-1)^{t-1} \{(*,5,1^t)-(*,3,2^2,1^t)\} 
 +\delta_{n\ge 10}\sum_{r=0}^{n-10} (-1)^{r}\{(*4,2,1^{r}) -(*,2^4,1^{r})\}
\end{align*}

\end{enumerate}

\end{lemma}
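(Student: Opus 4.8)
The plan is to derive all eleven expansions from Lemma~\ref{lem2.12} together with the Murnaghan--Nakayama (border-strip) rule and the Pieri rule, exploiting at each stage the $\omega$-symmetry of the products.

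First, the five items that are genuine pairs $p_{n-k}p_k$---namely (1) (the opening hook identity $p_n=\sum_{r=0}^{n-1}(-1)^r(n-r,1^r)$ of the proof of Lemma~\ref{lem2.12}), (2) $=p_{n-1}p_1$, (3) $=p_{n-2}p_2$, (6) $=p_{n-3}p_3$ and (9) $=p_{n-4}p_4$---are obtained by substituting the pair $(n,m)=(n-1,1),(n-2,2),(n-3,3),(n-4,4)$ into Lemma~\ref{lem2.12} and reading off the pieces $\alpha,\beta,\gamma,D_{n,m}$. The only work here is to unwind the compact $(*,\dots)$ notation and to record the finitely many ``small'' shapes that the double-hook sum $D_{n,m}$ and the hook sums $\alpha,\beta$ contribute, together with their $\omega$-images; the exclusion $m\ne 2$ when $n=m$ in Lemma~\ref{lem2.12} is harmless here since $n\ge 5$.

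For the remaining items, which carry extra factors of $p_1$ or a factor $h_2$, I would first pull out the low-degree factor $Q$ (of degree $k\le 4$) and expand it in the Schur basis using $h_2=s_{(2)}$, $e_2=s_{(1^2)}$, $p_1=s_{(1)}$, $p_2=s_{(2)}-s_{(1^2)}$ and $p_3=s_{(3)}-s_{(2,1)}+s_{(1^3)}$, giving
\[
p_2p_1=s_{(3)}-s_{(1^3)},\quad h_2p_1=s_{(3)}+s_{(2,1)},\quad p_1^2=s_{(2)}+s_{(1^2)},
\]
\[
p_3p_1=s_{(4)}-s_{(2,2)}+s_{(1^4)},\quad p_2p_1^2=s_{(4)}+s_{(3,1)}-s_{(2,1,1)}-s_{(1^4)}.
\]
Each product in (4),(5),(7),(8),(10),(11) then becomes a short signed sum $\sum_\nu c_\nu\,p_{n-k}s_\nu$ with $|\nu|=k\le 4$, and I would compute each $p_{n-k}s_\nu$ directly by attaching a border strip of size $n-k$ to the Young diagram of $\nu$ (with sign $(-1)^{\mathrm{ht}-1}$), exactly as in Lemma~\ref{lem2.12} but now with a $\nu$ of at most four boxes. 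Equivalently one may build these up from the already-established pairs by a single Pieri multiplication: (7),(10) from $p_{n-3}p_2$ and $p_{n-4}p_3$ times $s_1$; (11) from $p_{n-4}p_2$ times $s_1^2$; and (4),(5) from $p_{n-2}p_2$ together with one computation of $p_{n-2}e_2=p_{n-2}s_{(1^2)}$ (a single vertical-strip Pieri expansion), via $h_2=p_2+e_2$ and $p_1^2=p_2+2e_2$.

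Throughout I would use the $\omega$-symmetry both as an organizing principle and as a check: since $\omega p_r=(-1)^{r-1}p_r$, each product $p_\lambda$ satisfies $\omega p_\lambda=(-1)^{n-\ell(\lambda)}p_\lambda$, so its Schur expansion is invariant under $\lambda\mapsto\lambda^t$ up to the global sign $(-1)^{n-\ell(\lambda)}$; this is precisely the source of the ``$(-1)^n[\dots]$'' blocks, and it lets me compute only the shapes with large first part and conjugate the rest. The main obstacle is the reduction to a genuinely \emph{reduced} expansion: when the several terms $p_{n-k}s_\nu$ are combined (or when a reduced expansion is multiplied by $s_1$), a given $s_\lambda$ is typically produced by more than one $\nu$, sometimes with opposite signs, and one must verify that the collisions and cancellations leave exactly the stated coefficients in $\{0,\pm 1\}$ apart from the few explicitly listed low terms. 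Bound up with this is the careful treatment of the small-$n$ degeneracies---the truncations flagged by $\delta_{n\ge 9}$, $\delta_{n\ge 10}$ and the upper limits on the running sums---which arise exactly when a border strip would require more rows than are available or would overlap the boxes of $\nu$; these boundary shapes have to be checked by hand.
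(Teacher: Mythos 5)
Your proposal is correct and follows essentially the same route as the paper: the pure pairs $p_{n-k}p_k$ (items (1),(2),(3),(6),(9)) are read off from Lemma~\ref{lem2.12}, and the composite products are then obtained by Murnaghan--Nakayama/Pieri multiplication by the small factor, with the real work being the verification that the combined expansions are reduced and that the small-$n$ truncations are handled correctly. Your minor variants---using $h_2=p_2+e_2$ and $p_1^2=p_2+2e_2$ in place of the paper's $p_1^2=2h_2-p_2$, or Schur-expanding the degree-$\le 4$ factor and attaching a single large border strip rather than multiplying the established expansion by $s_{(1)}$ or $h_2$---are equivalent bookkeeping for the same computation.
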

\begin{proof} {\bf Part (1)} is the well-known expansion of the power sum $p_n$ into Schur functions indexed by hooks  (see, e.g. \cite{M}). 

{\bf Parts (2)-(3)} and {\bf  Part (6)}, {\bf  Part (9)} follow by putting $m=1,2,3,4$  in Lemma~\ref{lem2.12}, and replacing 
$n$ with $n-1, n-2, n-3, n-4$ respectively.  In {\bf Part (6)}, note that $\gamma(n-3,3) =(n-4,2,2)$ and hence the corresponding term is 
$(n-4,2,2)+(-1)^{n-4} (3,3,1^{n-6}).$ 

The expansion of $p_{n-2}p_1^2$ in {\bf Part (5)} follows by subtracting twice the equation in  {\bf Part (3)} from {\bf Part (4)}, by virtue of the identity $p_1^2=2h_2-p_2.$

For {\bf Part (4)}:

Write $p_{n-2}=(n-2) + \sum_{r=1}^{n-5} (-1)^r (n-2-r, 1^r) + (-1)^{n-4} (2, 1^{n-4}) +(-1)^{n-3} (1^{n-2}).$

Using the Pieri rule, we have
\begin{align*}&p_{n-2}h_2=(n)+(n-1,1) +(n-2,2) \\
&+(-1)^{n-4}\left( (4, 1^{n-4}) +( 3,2,1^{n-5}) +(3, 1^{n-3}) +(2^2, 1^{n-4}) \right) \\
&+(-1)^{n-3}\left((3, 1^{n-3})+(2, 1^{n-2})  \right)
+\sum_{r=1}^{n-5} (-1)^r (n-2-r, 3, 1^{r-1})\\
&+ \sum_{r=1}^{n-5} (-1)^r (n-1-r, 2, 1^{r-1}) +\sum_{r=1}^{n-5} (-1)^r (n-2-r, 2, 1^r)\quad (F)\\ 
&+\sum_{r=1}^{n-5} (-1)^r (n-r, 1^r) +\sum_{r=1}^{n-5} (-1)^r (n-1-r, 1^{r+1}). \quad (G)
\end{align*}
Line $(F)$ is a telescoping sum which collapses into
$$\sum_{t=0}^{n-6} (-1)^{t+1} (n-2-t, 2, 1^{t}) +\sum_{r=1}^{n-5} (-1)^r (n-2-r, 2, 1^r) =-(n-2,2)+(-1)^{n-5} (3,2,1^{n-5}).$$
Similarly line $(G)$ collapses into
$$\sum_{r=1}^{n-5} (-1)^r (n-r, 1^r) +\sum_{t=0}^{n-4} (-1)^{t-1} (n-t, 1^{t})=-(n-1,1)+(-1)^{n-5} (4,1^{n-4}).$$
Hence $p_{n-2} h_2$ reduces to 
$$(n)+\sum_{r=1}^{n-5} (-1)^r (n-2-r, 3, 1^{r-1}) +(-1)^n (2^2, 1^{n-4}) +(-1)^{n-1}(2, 1^{n-2}).$$

For {\bf Part (7):} We start with the expression for $p_{n-3} p_2$ and multiply by $p_1.$  One checks that this gives 
\begin{align*} &(n)-(n-2,1^2)+(n-3,3)+(n-3,2,1)\\
&+ (-1)^{n-1}(1^n)+(-1)^{n-1}(2^3, 1^{n-6})  +(-1)^{n-2} (3, 1^{n-3}) +(-1)^{n-1} (3,2,1^{n-5})\\
&+\sum_{t=0}^{n-7} (-1)^{t+1} [(*, 3, 1^t)+(*, 4,1^t)+(*, 3,2,1^{t-1}) +(*, 3, 1^{t+1})] \quad (A1)\\
&+\sum_{t=0}^{n-7} (-1)^{t+1}[(*, 2^2, 1^t) + (*, 3,2,1^t)+(*, 2^3, 1^{t-1}) + (*, 2^2, 1^{t+1}) ]\quad(A2)
\end{align*}
Note that the sums in (A1) and (A2) vanish identically unless $n\geq 7.$ The first and last summands in (A1) collapse to $(-(n-3,3)+(-1)^{n-6}(3^2, 1^{n-6}),$ and similarly the first and last summands in (A2) collapse to $-(n-4,2^2)+(-1)^{n-6} (2^3, 1^{n-6}).$ 
Also note that where we have written $1^{t-1}$ for part 1 with multiplicity $t-1,$ there is no contribution unless $t\geq 1.$ 
So the third sum in (A1) and the second sum in (A2) cancel each other. 

Likewise, {\bf Parts (8) and (10)} follow respectively from 
{\bf Parts (4) and (6)}.   Finally {\bf Part (11)} follows from {\bf Part (7)}.
\end{proof}



\begin{lemma}\label{lem2.14}  The expansion of $p_2^2 p_{n-4}$ is as follows:

\begin{enumerate}[label=(\arabic*)]
\item  The  irreducibles appearing with multiplicity $\pm 2$ are indexed by the following partitions $\lambda:$
\begin{enumerate}
\item $(n-2,2)$ with coefficient 2 and $(n-3,3)$ with coefficient $-2;$
\item $(2^2, 1^{n-4})$ with coefficient $2(-1)^{n-1};$
\item $(2^3, 1^{n-6})$ with coefficient $2(-1)^{n};$
\item $(*,3^2,1^t)$ with coefficient $2(-1)^{t};$
\end{enumerate}
 The remaining irreducibles (with multiplicity) are:

\item $(n), (-1)^{n}(4^2, 1^{n-8}),   
(-1)^{n-1}(4,3,1^{n-7}), (-1)^{n} (4,2,1^{n-6}), (-1)^{n-1} (4,1^{n-4}),$

$ (-1)^{t-1}(*,5,1^t);$

\item $(-1)^{n-1} (1^n), (n-3, 1^3),  (-1)(n-6, 2^3), 
(-1) (n-4,2,1^2),(n-5,2^2,1),(-1)^{t}(*, 2^4, 1^t)  ;$

\item $(-1)^n (3, 1^{n-3}),  (-1) (n-1,1),(-1)^t (3,2^2,1^t),
(-1)^t (3^2,2,1^t), (-1)^{t-1}(*,4,2,1^t), (n-4,4).$

\item $(-1)(n-2,1^2), (-1)^n (2,1^{n-2}),(-1)^{n-1} (2^4,1^{n-8}),   (n-4,3,1), (-1)(n-5, 3, 2),$
$ (-1)^t (*, 3,2^2,1^t).$

\end{enumerate}

\end{lemma}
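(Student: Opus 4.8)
The plan is to reduce the computation of $p_2^2 p_{n-4}$ to products already expanded in Lemma~\ref{lem2.13}, by exploiting a linear relation among the degree-two symmetric functions. Since $h_2=\frac{1}{2}(p_1^2+p_2)$, we have $p_2=2h_2-p_1^2$, and hence
\[
p_2^2 = 2\,p_2 h_2 - p_2 p_1^2,
\qquad\text{so}\qquad
p_{n-4}p_2^2 = 2\,h_2\,(p_{n-4}p_2) - p_{n-4}p_2 p_1^2 .
\]
The second term on the right is \emph{exactly} the expansion recorded in Lemma~\ref{lem2.13}(11). Thus the whole problem collapses to expanding the single product $h_2\cdot(p_{n-4}p_2)$ and then forming the combination $2\,h_2(p_{n-4}p_2)-p_{n-4}p_2p_1^2$.

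First I would take the reduced Schur expansion of $p_{n-4}p_2$ --- this is Lemma~\ref{lem2.13}(3) with $n$ replaced throughout by $n-2$ --- and multiply it by $h_2$ using the Pieri rule, i.e.\ by adding a horizontal strip of size $2$ to each constituent partition. Because $p_{n-4}p_2$ is already reduced with all coefficients $\pm 1$, the only signs entering this step are those already present; the new constituents fall into the families one expects (hooks, near-hooks $(*,3,1^t)$, and the double hooks $(*,2^2,1^t)$ and $(*,3^2,1^t)$, together with their $\omega$-images). As in the derivation of Lemma~\ref{lem2.13}(4), most of the resulting sums telescope, leaving a short signed sum of distinct Schur functions. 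I would organise the bookkeeping exactly as in the proofs of Parts (4) and (7) of Lemma~\ref{lem2.13}, treating the hook, near-hook, and double-hook contributions separately and recording the surviving boundary terms.

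With both $h_2(p_{n-4}p_2)$ and $p_{n-4}p_2p_1^2$ in reduced form, the final step is to form $2\,h_2(p_{n-4}p_2)-p_{n-4}p_2p_1^2$ and collect like terms. The coefficient of a given Schur function is $2c'-c''$, where $c'$ is its coefficient in $h_2(p_{n-4}p_2)$ and $c''$ its coefficient in $p_{n-4}p_2p_1^2$. A partition occurring in both products with equal coefficient survives with coefficient $c'$ (hence $\pm 1$), while a partition occurring only in $h_2(p_{n-4}p_2)$ survives with coefficient $2c'=\pm 2$; these latter produce precisely the distinguished families $(n-2,2),\,(n-3,3),\,(2^2,1^{n-4}),\,(2^3,1^{n-6}),\,(*,3^2,1^t)$ of part~(1), and all remaining terms land with coefficient $\pm 1$ as listed in parts (2)--(6). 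A useful consistency check is that the whole expansion must be sent to itself, up to the global sign $(-1)^{n-1}$, under $\omega$ (since $\omega(p_2^2p_{n-4})=(-1)^{n-1}p_2^2p_{n-4}$): this pairs $(n-2,2)$ with $(2^2,1^{n-4})$, pairs $(n-3,3)$ with $(2^3,1^{n-6})$, and matches each family in parts (2)--(6) with its conjugate partner in the list, giving an independent verification of the signs and multiplicities.

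The main obstacle will be the reduction-and-collection bookkeeping. Multiplying the already sizeable reduced expansion of $p_{n-4}p_2$ by $h_2$ generates many overlapping families, and one must verify that every telescoping cancellation, every sign, and every small-$n$ boundary term (where some families are empty or degenerate) is handled correctly, so that $2\,h_2(p_{n-4}p_2)-p_{n-4}p_2p_1^2$ is genuinely reduced with exactly the stated multiplicities. The truly delicate points are the handful of places where two $\pm 1$ contributions either reinforce to $\pm 2$ or cancel to $0$; tracking these coincidences is what forces the careful separation into parts (1)(a)--(d) versus (2)--(6), and it is here, rather than in any single Pieri or Murnaghan--Nakayama step, that the argument requires the most care.
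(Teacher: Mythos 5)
Your reduction $p_2^2=2p_2h_2-p_2p_1^2$, hence $p_2^2p_{n-4}=2h_2\,(p_2p_{n-4})-p_2p_1^2p_{n-4}$, is a genuinely different route from the paper's. The paper proves Lemma~\ref{lem2.14} by applying the Murnaghan--Nakayama rule directly to $p_2^2p_{n-4}$: it enumerates the ways a border strip of size $n-4$ can be attached to the shapes occurring in $p_2^2$, and the multiplicity-$\pm 2$ families of part (1) have a transparent origin there, namely the two distinct fillings of the $2\times 2$ square by the two $2$-strips. Your route is legitimate and non-circular: Lemma~\ref{lem2.13}(3) with $n$ replaced by $n-2$ gives the reduced expansion of $p_2p_{n-4}$, Lemma~\ref{lem2.13}(11) gives $p_2p_1^2p_{n-4}$, and neither of these depends on Lemma~\ref{lem2.14}. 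What your approach buys is that the only genuinely new computation is a Pieri multiplication by $h_2$ (a positive rule, so no new signs can enter beyond those in the input), the $\pm 2$ coefficients arise mechanically from the overall factor $2$, and the $\omega$-symmetry $\omega(p_2^2p_{n-4})=(-1)^{n-1}p_2^2p_{n-4}$ gives a genuine consistency check; the paper's enumeration, by contrast, is self-contained but must track all border-strip placements by hand.

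However, as written the proposal stops exactly where the content of the lemma begins. The expansion of $h_2\cdot(p_2p_{n-4})$ is never carried out, and your final collection argument rests on an incomplete dichotomy: writing $c'$ and $c''$ for the coefficients of a fixed Schur function in $h_2(p_2p_{n-4})$ and in $p_2p_1^2p_{n-4}$, you treat only the cases ``$c'=c''=\pm 1$'' and ``$c'=\pm1$, $c''=0$''. You must also exclude (i) partitions with $|c'|\geq 2$, and (ii) partitions occurring in both products with \emph{opposite} signs, which would give $2c'-c''=\pm 3$; neither possibility is ruled out a priori, and the $\omega$-symmetry constrains but does not determine the outcome. (Indeed the only way I see to exclude (i) and (ii) without computing is to invoke Lemma~\ref{lem2.14} itself, which is circular.) So the plan is sound and would succeed, but the verification that $2h_2(p_2p_{n-4})-p_2p_1^2p_{n-4}$ reproduces exactly the coefficient lists (1)--(6) --- which is everything the lemma asserts --- remains to be done, and it amounts to the same order of telescoping-and-collection bookkeeping that the paper's direct enumeration performs.
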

\begin{proof}
Using the Murnaghan-Nakayama rule, we list the different configurations for the irreducibles appearing in the expansion of $p_2^2p_{n-4}.$
The list below is organised by considering border strips of size $(n-4)$ attached to the shapes appearing in $p_2^2:$
\vskip.1in
\begin{enumerate}[label=(\arabic*)]
\item
\ytableausetup{smalltableaux}
 \ytableaushort{11XX \ldots  X, 22X, XXX,\ldots,\ldots  ,X}\qquad\qquad or \qquad\qquad
\ytableausetup{baseline}
\ytableaushort{12XX \ldots X, 12X, XXX,\ldots,\ldots, X}
\vskip.1in

Each of these contributes exactly the same set of shapes $\lambda$ containing the shape $(2,2)$.  We therefore obtain the following possibilities with (signed) multiplicity 2:
\begin{description}
\item[$\lambda_2=2$ and $\lambda_1=2$]  $(-1)^{t-1}(2,2,1^t)$ or $(-1)^{t}(2,2,2,1^t)$ 
\item[$\lambda_2=2$ and $\lambda_1\geq 3$]  $(n-2,2)$
\item[$\lambda_2=3$ and $\lambda_3=3$]  $(-1)^{t+2}(*, 3,3,1^t) $
\item[$\lambda_2=3$ and $\lambda_3=0$]  $(-1)(*,3)$
\end{description}
\item 
\vskip.2in
\ytableausetup{smalltableaux}
 \ytableaushort{1122XX \ldots X,XXXXX, X, \ldots,X,}
 \vskip.1in

\begin{description}
\item[$\lambda\not\supset (2,2)$]  Then $\lambda$ is a hook, so $(n)$ or $(-1)^{n-1}(4,1^{n-4})$.
\item[$\lambda\supset (2,2)$]  If $\lambda_2=4$ we obtain 
$(-1)^t(4,4,1^t)$, $(-1)^t(4,3,1^t)$, $(-1)^t(4,2,1^t)$; 
finally  if $\lambda_2=5$ we obtain $(-1)^{t+1}(*,5,1^t).$
\end{description}

\item 
\vskip.2in
\ytableaushort{1 XX \ldots X, 1 X, 2 X, 2X, X,\ldots, X}
\vskip.1in
\begin{description}
\item[$\lambda\not\supset (2,2)$]  Then $\lambda$ is a hook, so $(n-3,1^3)$ or $(-1)^{n-5}(1^{n})$.  
                 \item[$\lambda\supset (2,2)$] 
                 The possibilities are:
                 $(-1)(*,2,1^2), (-1)^2(*,2^2,1), (-1)^3(*,2^3)$ and $(-1)^t (*, 2^4, 1^t).$            
\end{description}
\item 
\vskip.2in
\ytableaushort{122X \ldots X, 1 XXX, XX, X, \ldots, X}
\vskip .1in
 \begin{description}
\item[$\lambda\not\supset (2,2)$]  Then $\lambda_2=1$, so $\lambda$ is a hook,  $(-1)\cdot (-1)^{n-3}(3,1^{n-3})$ or $(-1)\cdot (n-1,1)$.  
                 \item[$\lambda\supset (2,2)$] Then $2\leq \lambda_2\leq 4.$   If $\lambda_2=2$ then 
                 $(-1)\cdot (-1)^{t+1} (3,2^2,1^t)$; 
                 if $\lambda_2=3$ then $(-1)\cdot (-1)^{t+1} (3,3,2,1^t)$;  if $\lambda_2=4$ then $(-1)\cdot (-1)^{t+2} (*,4,2,1^t)$ or $(-1)\cdot (-1) (n-4,4).$
\end{description}
\item 
\vskip .2in
\ytableaushort{11 X X \ldots X, 2 XX, 2X , X, \ldots, X}
\vskip.1in
 \begin{description}
\item[$\lambda\not\supset (2,2)$]  Thus $\lambda_2=1,$ yielding $(-1)\cdot (n-2,1^2)$ or $(-1)\cdot (-1)^{n-5} (2,1,1,1^{n-4}).$
                 \item[$\lambda\supset (2,2)$] If $\lambda_2=2$ we have $(-1)\cdot (-1)^{n-6} (2,2,2,2,1^{n-8}).$
                 
                 If $\lambda_2=3$ we have 
                 $(-1)\cdot (-1) (*,3,1), (-1)\cdot (-1)^2 (*,3,2), $ and 
                 $(-1)\cdot (-1)^{t+3} (*,3,2^2,1^t).$                       
                                \end{description}
                                
\end{enumerate}
\end{proof}

Using Lemmas~\ref{lem2.13} and~\ref{lem2.14}, we can compute  partial sums in the reverse lexicographic order, starting from the top element $(n).$ 
\begin{proposition}\label{prop2.15}  The partial sums beginning with the top element $(n)$ are:
\begin{enumerate}[label=(\arabic*)]
\item $p_n=(n)+(-1)^{n-1}(1^n)-(n-1,1)+(-1)^{n-2} (2,1^{n-2})+\sum_{r=2}^{n-3} (-1)^r (*, 1^{r})$ for $n\geq 2$
\item 
$\sum_{(n)\geq \mu\geq (n-1,1)} p_\mu=p_n+p_{n-1} p_1= 2(n) +\sum_{r=1}^{n-2}(-1)^r (*, 1^r)
+\sum_{r=0}^{n-4}(-1)^{r-1} (*, 2, 1^r). $
\item $\sum_{(n)\geq \mu\geq (n-2,2)} p_\mu=p_n+p_{n-1}p_1+p_{n-2}p_2$
\vskip.07in
$=3(n)-2(n-1,1)+(-1)^{n} (1^n)
+\sum_{r=2}^{n-3} (-1)^r (*, 1^{r})
+\sum_{r=1}^{n-5} (-1)^{r-1} (*, 2,1^r)$
\vskip.07in
 $+\sum_{t=0}^{n-6} (-1)^{t+1}(*, 3, 1^{t})
+\sum_{t=0}^{n-6} (-1)^{t+1}(*, 2,2,1^{t})$
\vskip.07in
\item $\sum_{(n)\geq \mu\geq (n-2,1^2)} p_\mu=p_n+p_{n-1}p_1+p_{n-2}p_2+ p_{n-2} p_1^2$ ($n\geq 6$)
\vskip.07in
$=4(n) +(-1)^{n-1}(2, 1^{n-2}) +(-1)^n (2^2, 1^{n-4})+\sum_{r=1}^{n-3} (-1)^r (*, 1^r) $
\vskip.07in
$ + \sum_{r=0}^{n-5} (-1)^{r-1} (*, 2, 1^r) 
+2\cdot \sum_{r=0}^{n-6} (-1)^{r+1} (*, 3,1^r)$
\vskip.07in
\item $\sum_{(n)\geq \mu\geq (n-3,3)} p_\mu=p_n+p_{n-1}p_1+p_{n-2}p_2+ p_{n-2} p_1^2+p_{n-3}p_3$
\begin{align*}&=5(n)-2(n-1,1)-(n-2,2)+2(n-2,1^2) -(n-3,3)+(n-4,2^2)\\
&+(-1)^{n}(1^n)+2(-1)^{n-1} (2,1^{n-2})+(-1)^n (2^2,1^{n-4})+(-1)^{n}(2^3, 1^{n-6}) 
 +(-1)^{n-1}(3^2,1^{n-6})\\
&+\sum_{r={\bf 3}}^{n-4} (-1)^r(*, 1^r)+\sum_{r={\bf 2}}^{\bf n-6} (-1)^{r-1} (*,2,1^r) +2\sum_{r={\bf 1}}^{n-7} (-1)^{r+1} (*, 3,1^r)+{\bf 2\delta_{n=6}(*,3)}\\
&+\sum_{t=0}^{n-8} (-1)^{t+1}(*, 4,1^t)+\sum_{t=0}^{n-8}(-1)^{t+1}(*, 3,2,1^t)+\sum_{t=0}^{n-8}(-1)^{t+1}(*, 2^3,1^t)
\end{align*}
\item $\sum_{(n)\geq \mu\geq (n-3,2,1)} p_\mu$
\begin{align*}
&=6(n) -2(n-1,1)-(n-2,2) +(n-2,1^2)-(n-3,3)+(n-3,2,1)\\
&+2(-1)^{n-1} (2,1^{n-2}) +(-1)^n (2^2,1^{n-4})+(-1)^{n}(2^3, 1^{n-6})\\
&+(-1)^n (3,1^{n-3})+(-1)^{n-1}(3,2,1^{n-5}) \\
&+\sum_{r=3}^{n-4} (-1)^r(*, 1^r)+\sum_{r=2}^{n-6}(-1)^{r-1}(*,2,1^r) +2\sum_{r=1}^{n-7} (-1)^{r+1} (*, 3,1^r)+{\bf 2\delta_{n=6}(*,3)}\\
&+2\sum_{t=0}^{n-8} (-1)^{t-1}(*,4,1^t)+\sum_{t=0}^{n-8} (-1)^{t+1} (*, 3,2,1^t)
\end{align*}
\item $\sum_{(n)\geq \mu\geq (n-3,1^3)} p_\mu$  
\begin{align*}
&=7(n)-(n-2,2)-3(n-3,3)+2(n-2,1^2)+(n-4,2^2)\\
&+(-1)^{n}(1^n)+(-1)^n (2^2,1^{n-4})+(-1)^{n-1}(2^3, 1^{n-6}) \\
&+2(-1)^{n-1} (3,2,1^{n-5}) +2(-1)^n (3,1^{n-3})+(-1)^n (3^2,1^{n-6})\\
&+\sum_{r=3}^{n-4} (-1)^r(*, 1^r)+\sum_{r=2}^{n-6} (-1)^{r-1}(*,2,1^r) +2\sum_{r=1}^{n-7} (-1)^{r+1} (*, 3,1^r)\\
&+3\sum_{t=0}^{n-8} (-1)^{t+1}(*, 4,1^t)+\sum_{t=0}^{n-8}(-1)^{t}(*, 3,2,1^t)+\sum_{t=0}^{n-8}(-1)^{t+1}(*, 2^3,1^t).
\end{align*}
%
%
\item $\sum_{(n)\geq \mu\geq (n-4,4)} p_\mu$ 
\begin{align*}&=8(n)-(n-1,1)-(n-2,2)+3(n-2,1^2)
-3(n-3,3)-2(n-3,1^3)\\
&-2(n-4,4)+(n-4,3,1)+(n-4,2^2)+2(n-5,3,2)-(n-5,2^2,1)\\
&+(-1)^n 2(1^n)+(-1)^{n-1} (2,1^{n-2}) +(-1)^n (2^2,1^{n-4}) 
+(-1)^{n-1}(2^3,1^{n-6}) +(-1)^n(2^4,1^{n-8})\\
&+2(-1)^{n-1} (3,2,1^{n-5}) +3(-1)^n (3,1^{n-3}) +(-1)^n (3^2,1^{n-6}) +(-1)^n(3^2,2,1^{n-8})\\
& +(-1)^{n-1}(3,2^2,1^{n-7}) +(-1)^{n-1}(4,3,1^{n-7})
+(-1)^n (4^2,1^{n-8})\\
&+\sum_{r={\bf 4}}^{\bf{n-5}} (-1)^r (*,1^r)
+\sum_{r={\bf 3}}^{\bf{n-7}} (-1)^{r-1}(*,2,1^r)\\
& +2\sum_{r={\bf 2}}^{n-7} (-1)^{r+1} (*, 3,1^r)
+3\sum_{t={\bf 1}}^{n-8} (-1)^{t+1}(*, 4,1^t)+\sum_{t={\bf 1}}^{n-8}(-1)^{t}(*, 3,2,1^t)+\sum_{t={\bf 1}}^{n-8}(-1)^{t+1}(*, 2^3,1^t)\\
&+\sum_{t=0}^{n-10}(-1)^{t+1}\left[(*,5,1^t)+(*,4,2,1^t)+(*,3,2^2,1^t)+(*,2^4,1^t)\right]
\end{align*}
\item $\sum_{(n)\geq \mu\geq (n-4,3,1)} p_\mu$
\begin{align*}&=9(n)-(n-1,1)-2(n-2,2)+3(n-2,1^2)
-2(n-3,3)-(n-3,1^3)\\[ 3pt]
&-2(n-4,4)+(n-4,3,1)+(n-4,2^2)-(n-4,2,1^2)+2(n-5,3,2)-(n-6,2^3)\\[ 3pt]
&+(-1)^n (1^n)+(-1)^{n-1} (2,1^{n-2}) +2(-1)^n (2^2,1^{n-4}) 
+2(-1)^{n-1}(2^3,1^{n-6})\\
& +(-1)^n(2^4,1^{n-8})\\
&+2(-1)^{n-1} (3,2,1^{n-5}) +3(-1)^n (3,1^{n-3}) +(-1)^n (3^2,1^{n-6}) +(-1)^n(3^2,2,1^{n-8}) \\[ 3pt]
&+(-1)^{n-1}(3,2^2,1^{n-7})\\[ 3pt]
& +2(-1)^{n-1}(4,3,1^{n-7})
+2(-1)^n (4^2,1^{n-8})+(-1)^n(4,2,1^{n-6})+(-1)^{n-1}(4,1^{n-4})\\[ 3pt]
&+\sum_{r={\bf 4}}^{\bf{n-5}} (-1)^r (*,1^r)
+\sum_{r={\bf 3}}^{\bf{n-7}} (-1)^{r-1}(*,2,1^r)\\[ 3pt]
& +2\sum_{r={\bf 2}}^{n-7} (-1)^{r+1} (*, 3,1^r)
+3\sum_{t={\bf 1}}^{n-8} (-1)^{t+1}(*, 4,1^t)+\sum_{t={\bf 1}}^{n-8}(-1)^{t}(*, 3,2,1^t)\\
&+\sum_{t={\bf 1}}^{n-8}(-1)^{t+1}(*, 2^3,1^t)\\[ 3pt]
&+\sum_{t=0}^{n-10}(-1)^{t+1}\left[2(*,5,1^t)+(*,4,2,1^t)+(*,3,2^2,1^t)\right] +\sum_{t=0}^{n-9} (-1)^{t-1}(*,3^2,1^t)
\end{align*}
\item  $\sum_{(n)\geq \mu\geq (n-4,2^2)} p_\mu$
\begin{align*}&=10(n)-2(n-1,1)+2(n-2,1^2)-{\color{red}4}(n-3,3)\\[ 3pt]
&-(n-4,4)+2(n-4,3,1)+(n-4,2^2)-2(n-4,2,1^2)\\[ 3pt]
&+(n-5,3,2)+(n-5,2^2,1)-2(n-6,2^3)+(-1)^n{\color{red}4}(3,1^{n-3})\\[ 3pt]
&+2(-1)^{n-1}(3,2,1^{n-5})+(-1)^n(3^2,1^{n-6})+{\bf 3}(-1)^n(3^2,2,1^{n-8})+2(-1)^{n-1}(3,2^2,1^{n-7})\\[ 3pt]
&+(-1)^{n-1}(4,3,1^{n-7})+{\bf 0}(-1)^n(4^2,1^{n-8}) +(-1)^n{\bf 2}(4,2,1^{n-6}) +2(-1)^{n-1}(4,1^{n-4})\\[ 3pt]
&+\sum_{r={\bf 4}}^{\bf{n-5}} (-1)^r (*,1^r)
+\sum_{r={\bf 3}}^{\bf{n-7}} (-1)^{r-1}(*,2,1^r)
 +2\sum_{r={\bf 2}}^{\bf n-8} (-1)^{r+1} (*, 3,1^r)\\
&+3\sum_{t={\bf 1}}^{\bf n-9} (-1)^{t+1}(*, 4,1^t)\\
&+\sum_{t={\bf 1}}^{\bf n-9}(-1)^{t}(*, 3,2,1^t)+\sum_{t={\bf 1}}^{n-8}(-1)^{t+1}(*, 2^3,1^t)+\sum_{t=0}^{n-10} (-1)^t (*,2^4,1^t)\\[ 3pt]
&+\sum_{t=0}^{n-10}(-1)^{t-1}\left[{\bf 3}(*,5,1^t)+{\bf 2}(*,4,2,1^t)\right] +\sum_{t=0}^{n-9} (-1)^{t}(*,3^2,1^t)
\end{align*}
\item $\sum_{(n)\geq \mu\geq (n-4,2,1^2)} p_\mu$
\begin{align*}
&=11(n) -(n-1,1)+(n-2,1^2)-{\color{red}4}(n-3,3)-(n-3,1^3)\\
&-(1+\delta_{n\geq 9})(n-4,4)+3(n-4,3,1)+(n-4,2^2)-(n-4,2,1^2)\\
&-(n-6,2^3) +(-1)^n (1^n)+(-1)^n (2,1^{n-2}) +{\bf 2}(-1)^{n-1} (2^4, 1^{n-8})\\
&+(-1)^n{\color{red}3}(3,1^{n-3})+2(-1)^{n-1}(3,2,1^{n-5})+(-1)^n(3^2,1^{n-6})+{\bf 2}(-1)^n(3^2,2,1^{n-8})\\
&+(-1)^{n-1}(3,2^2,1^{n-7}) \\
&+{\bf 2}(-1)^{n-1}(4,3,1^{n-7})+(-1)^n(4^2,1^{n-8}) +(-1)^n{\bf 3}(4,2,1^{n-6}) +(-1)^{n-1}{\bf 3}(4,1^{n-4})\\
&+\sum_{r={\bf 4}}^{\bf{n-5}} (-1)^r (*,1^r)
+\sum_{r={\bf 3}}^{\bf{n-7}} (-1)^{r-1}(*,2,1^r)
 +2\sum_{r={\bf 2}}^{\bf n-8} (-1)^{r+1} (*, 3,1^r)
+3\sum_{t={\bf 1}}^{\bf n-9} (-1)^{t+1}(*, 4,1^t)\\
&+\sum_{t={\bf 1}}^{\bf n-9}(-1)^{t}(*, 3,2,1^t)+\sum_{t={\bf 1}}^{\bf n-9}(-1)^{t+1}(*, 2^3,1^t)\\
&+\sum_{t=0}^{n-10}(-1)^{t-1}\left[{\color{red} 4}(*,5,1^t)+{\bf 1}(*,4,2,1^t)-(*,3,2^2,1^t)\right] +\sum_{t=0}^{n-9} (-1)^{t}(*,3^2,1^t)
\end{align*}

\end{enumerate}

\end{proposition}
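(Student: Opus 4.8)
The plan is to prove Proposition~\ref{prop2.15} by accumulating the individual power-sum expansions down the reverse lexicographic chain. Since consecutive partial sums differ by a single term $p_\mu$, and since the reverse lexicographic order on the partitions of $n$ below $(n)$ begins
$$(n) > (n-1,1) > (n-2,2) > (n-2,1^2) > (n-3,3) > (n-3,2,1) > (n-3,1^3) > (n-4,4) > \cdots,$$
each of the statements (1)--(11) is obtained from its predecessor by adding the Schur expansion of the appropriate $p_\mu$. Thus parts (2)--(11) require the expansions of $p_{n-1}p_1$, $p_{n-2}p_2$, $p_{n-2}p_1^2$, $p_{n-3}p_3$, $p_{n-3}p_2p_1$, $p_{n-3}p_1^3$, $p_{n-4}p_4$, $p_{n-4}p_3p_1$, $p_{n-4}p_2^2$ and $p_{n-4}p_2p_1^2$. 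All but one are read off directly from Lemmas~\ref{lem2.13} and~\ref{lem2.14}; the sole exception is $p_{n-3}p_1^3$, which I would obtain from $p_1^2 = 2h_2 - p_2$, giving $p_{n-3}p_1^3 = 2\,p_{n-3}h_2p_1 - p_{n-3}p_2p_1$, and then substituting parts (8) and (7) of Lemma~\ref{lem2.13}.

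The substance of the argument is the reduction of each accumulated expression to a form in which every Schur function appears exactly once. Each $p_\mu$ expansion consists of finitely many \emph{isolated} Schur functions together with several \emph{families}, each family being a signed sum $\sum_t (\ast,\,\mathrm{core},\,1^t)$ over a range whose endpoints depend on $n$. I would carry out the reduction in two passes. First I merge the isolated terms and the families sharing a common core shape, adding coefficients. Second I inspect the endpoints of every family, since the partition occurring at an extreme value of the running index is typically an honest partition that coincides with, or cancels against, an isolated term contributed by the newly added $p_\mu$; each such coincidence forces either an adjustment of an isolated coefficient or an inward shift of a summation limit. For example, in passing from (1) to (2) the two $(1^n)$ contributions cancel; in passing to (3) the endpoint $(n-2,2)$ of the $(\ast,2,1^r)$ family cancels the isolated $+(n-2,2)$ and the endpoint $(2,1^{n-2})$ cancels against $(-1)^n\!\cdot\!(-(2,1^{n-2}))$, so both ranges contract and the boldfaced limits such as $\sum_{r=\mathbf{2}}^{\mathbf{n-3}}$ record exactly these contractions. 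Since each $p_\mu$ is an $\omega$-eigenvector, with eigenvalue $(-1)^{|\mu|-\ell(\mu)}$, the terms of each individual expansion pair up under conjugation, which halves the verification of the input expansions and organises the endpoint analysis.

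I expect the main obstacle to be precisely this endpoint bookkeeping, compounded by genuine small-$n$ degeneracies. For small $n$ the hooks and double hooks at the ends of two different families can collide—a term such as $(n-4,4)$ ceasing to be distinct from a $(\ast,\dots)$ family member, which is why the factor $\delta_{n\ge 9}$ appears in the $p_{n-4}p_2p_1^2$ expansion—and several of the displayed isolated coefficients, in particular those attaining the critical values $3$ and $4$ highlighted in parts (10) and (11), materialise only after two or three separate contributions have been superimposed on the same partition. Establishing that no spurious partition is created, that none is double-counted, and that every reduced coefficient matches the stated value is therefore a careful but finite case analysis, performed one line of Proposition~\ref{prop2.15} at a time and terminating at part (11). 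Once reducedness is verified for each of (1)--(11), the proposition follows by the accumulation described in the first paragraph.
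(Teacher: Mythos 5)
Your proposal is correct and follows essentially the same route as the paper: accumulate the expansions of Lemmas~\ref{lem2.13} and~\ref{lem2.14} term by term down the chain, handle the missing $p_{n-3}p_1^3$ via the identity $p_1^2=2h_2-p_2$ (the paper equivalently adds $p_{n-3}(p_2p_1+p_1^3)=2p_{n-3}h_2p_1$ to part (5)), and do the careful endpoint/collapsing bookkeeping to reach reduced expressions, exactly as in the paper's worked example for part (10).
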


\begin{proof}  We sketch the proof, since the details are routine and tedious.  In general, each partial sum is obtained from the preceding one by adding the expansion of the power sum indexed by the appropriate partition.  More precisely, if 
$\mu^+$ covers $\mu$ in reverse lexicographic order, then 
$\psi_{\mu^+}=\psi_\mu+p_{\mu^+}.$

Thus the first two partial sums follow by adding the first two sums in Lemma~\ref{lem2.13}. A similar procedure is applied for the remaining sums, with the following exceptions.
 
 For (3), we compute the partial sum by using 
$p_2+p_1^2=2h_2$ and thus it suffices to add the expansion of $2h_2p_{n-2}$ from Lemma~\ref{lem2.13} to the preceding partial sum.

Similarly for (7), we compute the sum 
$\sum_{(n)\geq \mu\geq (n-3,1^3)} p_\mu$ 
 by adding $p_{n-3} (p_2p_1+p_1^3)=2 p_{n-3} h_2 p_1$ (using the expansion  (8) of Lemma~\ref{lem2.13}),   to 
$\sum_{(n)\geq \mu\geq (n-3,3)} p_\mu,$ which is given in (5) above. 

In general, in all cases we use the relevant computations of Lemma~\ref{lem2.13}, the chief exception being the expression in (10), which requires the expansion of $p_{n-4} p_2^2$ computed in Lemma~\ref{lem2.14}.  The expression (11) of this proposition then follows cumulatively using {\bf Part (11)} of Lemma~\ref{lem2.13}.

It is important to note that the sums have been carefully rewritten so that there is no \lq\lq collapsing'': as an example, we give here an analysis of what happens in computing the partial sum (10).  This sum  is obtained by adding to the sum in (9) the expansion for $p_{n-4}p_2^2,$ which from Lemma~\ref{lem2.14} is 
%
\begin{align*} &{\scriptstyle  (n)-(n-1,1) +2 (n-2,2) -(n-2,1^2) -2(n-3,3) +(n-3,1^3)}\\
&{\scriptstyle + (n-4,4)+(n-4,3,1)-(n-4,2,1^2) -(n-5,3,2)
+(n-5,2^2,1)-(n-6,2^3)}\\
&{\scriptstyle +(-1)^{n-1}[(1^n)-(2,1^{n-2})+2(2^2,1^{n-4}) 
-2(2^3,1^{n-6}) +(2^4,1^{n-8})]}\\
&{\scriptstyle +(-1)^n (3,1^{n-3}) +(-1)^{n-1} (3,2^2,1^{n-7}) +(-1)^n (3^2,2,1^{n-8})}\\
	&{\scriptstyle +(-1)^{n}(4^2, 1^{n-8})+(-1)^{n-1}(4,3,1^{n-7}) +(-1)^{n} (4,2,1^{n-6})+(-1)^{n-1} (4,1^{n-4})}\\
&{\scriptstyle +\sum_{t=0}^{n-10} (-1)^t [(*,2^4,1^t)+(*,3,2^2,1^t)]
+\sum_{t=0}^{n-10} (-1)^{t-1} [(*,5,1^t)+(*,4,2,1^t)]}\\
&{\scriptstyle +2\sum_{t=0}^{n-9} (-1)^t (*,3^2,1^t)  }
\end{align*}
Adding this to (9) of Proposition~\ref{prop2.15} produces 

 $\sum_{(n)\geq \mu\geq (n-4,2^2)} p_\mu$
\begin{align*}&{\scriptstyle =10(n)-2(n-1,1)+2(n-2,1^2)-{\color{red}4}(n-3,3)}\\
&{\scriptstyle -(n-4,4)+2(n-4,3,1)+(n-4,2^2)-2(n-4,2,1^2)}\\
&{\scriptstyle +(n-5,3,2)+(n-5,2^2,1)-2(n-6,2^3)+(-1)^n{\color{red}4}(3,1^{n-3})}\\
&{\scriptstyle +2(-1)^{n-1}(3,2,1^{n-5})+(-1)^n(3^2,1^{n-6})+\textcolor{ magenta}{(A1)\ 2(-1)^n(3^2,2,1^{n-8})}+2(-1)^{n-1}(3,2^2,1^{n-7})}\\
&{\scriptstyle +\textcolor{ blue}{(A2)\ {\bf 3}(-1)^{n-1}(4,3,1^{n-7})}+\textcolor{ cyan}{(A3)\ \bf 3(-1)^n(4^2,1^{n-8})} +(-1)^n{\bf 2}(4,2,1^{n-6}) +2(-1)^{n-1}(4,1^{n-4})}\\
&{\scriptstyle +\sum_{r={\bf 4}}^{\bf{n-5}} (-1)^r (*,1^r)
+\sum_{r={\bf 3}}^{\bf{n-7}} (-1)^{r-1}(*,2,1^r)
 +\textcolor{ blue}{(A2)\ 2\sum_{r={\bf 2}}^{n-7} (-1)^{r+1} (*, 3,1^r)}
+\textcolor{ cyan}{(A3)\ \bf 3\sum_{t={\bf 1}}^{n-8} (-1)^{t+1}(*, 4,1^t)}}\\
&{\scriptstyle +\textcolor{ magenta}{(A1)\ \sum_{t={\bf 1}}^{n-8}(-1)^{t}(*, 3,2,1^t)}+\sum_{t={\bf 1}}^{n-8}(-1)^{t+1}(*, 2^3,1^t)+\sum_{t=0}^{n-10} (-1)^t (*,2^4,1^t)}\\
&{\scriptstyle +\sum_{t=0}^{n-10}(-1)^{t-1}\left[{\bf 3}(*,5,1^t)+{\bf 2}(*,4,2,1^t)\right] +\sum_{t=0}^{n-9} (-1)^{t}(*,3^2,1^t)
}
\end{align*}

The items of matching colour (or matching labels, in the absence of colour) can be combined as follows:

$ \qquad{\scriptstyle \textcolor{blue}{(A2)\ {\bf 3}(-1)^{n-1}(4,3,1^{n-7})}+\textcolor{blue}{2\sum_{r={\bf 2}}^{n-7} (-1)^{r+1} (*, 3,1^r)}=\textcolor{blue}{(-1)^{n-1}(4,3,1^{n-7})+2\sum_{r={\bf 2}}^{n-8} (-1)^{r+1} (*, 3,1^r) }} ;$

$\qquad{\scriptstyle\textcolor{ cyan}{\bf (A3)\  3(-1)^n(4^2,1^{n-8})}+
\textcolor{cyan}{\bf 3\sum_{t={\bf 1}}^{n-8} (-1)^{t+1}(*, 4,1^t)}
=\textcolor{cyan}{\bf 3\sum_{t={\bf 1}}^{n-9} (-1)^{t+1}(*, 4,1^t)}};$

$\qquad{\scriptstyle   \textcolor{ magenta}{(A1)\  2(-1)^n(3^2,2,1^{n-8})}   +\textcolor{magenta}{\sum_{t={\bf 1}}^{n-8}(-1)^{t}(*, 3,2,1^t)}   =\textcolor{magenta}{3(-1)^n(3^2,2,1^{n-8})} +\textcolor{magenta}{\sum_{t={\bf 1}}^{n-9}(-1)^{t}(*, 3,2,1^t)} } $

Making these replacements finally yields the completely reduced expression (10).

Likewise, the reduced expression (11) is obtained by adding to (10) the expansion of $p_{n-4}p_2p_1^2.$ There is only one pair that recombines into one term here, namely 

\qquad${\scriptstyle (-1)^{n-1} (2^4, 1^{n-8})+\sum_{t=1}^{n-8}(-1)^{t+1} (*,2^3,1^t)=2(-1)^{n-1} (2^4, 1^{n-8})+\sum_{t=1}^{n-9}(-1)^{t+1} (*,2^3,1^t)}.$

\end{proof}

We can now deduce the positivity of  the functions $\psi_\mu$ for $\mu\geq (n-4,1^4).$


\begin{theorem}\label{thm2.16}  Let $n\geq 6.$  Let $\mu$ be a partition in the interval $[(n-4,1^{n-4}), (n)]$ in the reverse lexicographic order on partitions of $n.$ Then $\psi_\mu$ is Schur-positive. 

\end{theorem}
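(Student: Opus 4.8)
The plan is to work down from the top of the chain, exploiting the identity
\[
\psi_\mu = \psi_n - \sum_{\mu < \nu \leq (n)} p_\nu ,
\]
valid for every $\mu$ in the interval $[(n-4,1^4),(n)]$. Since $\psi_n$ is Schur-positive by Theorem~\ref{thm1.1}, the multiplicity of each $s_\lambda$ in $\psi_\mu$ is automatically nonnegative as soon as the multiplicity of $s_\lambda$ in the subtracted sum $S_\mu := \sum_{\mu < \nu \leq (n)} p_\nu$ does not exceed its multiplicity in $\psi_n$. Consequently only those $\lambda$ for which $s_\lambda$ occurs in $S_\mu$ with a \emph{positive} coefficient need to be examined at all; wherever the coefficient of $s_\lambda$ in $S_\mu$ is $\leq 0$, Schur-positivity of $\psi_n$ already settles the matter. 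The first step is to observe that, for large $n$, the twelve partitions filling the interval, read downward in reverse lexicographic order, are $(n),(n-1,1),(n-2,2),(n-2,1^2),(n-3,3),(n-3,2,1),(n-3,1^3),(n-4,4),(n-4,3,1),(n-4,2^2),(n-4,2,1^2),(n-4,1^4)$, and that for each $\mu$ strictly below $(n)$ the sum $S_\mu$ is exactly one of the eleven partial sums already computed in reduced form in Proposition~\ref{prop2.15}: if $\mu^+$ covers $\mu$, then $S_\mu=\sum_{(n)\geq\nu\geq\mu^+}p_\nu$. The case $\mu=(n)$ needs no argument, as $\psi_{(n)}=\psi_n$.

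The core of the proof is then a single inspection of the reduced expressions of Proposition~\ref{prop2.15}, sorting their positive coefficients into three groups. The trivial representation $s_{(n)}$ occurs in $S_\mu$ with coefficient equal to the number of partitions in $[\mu^+,(n)]$, since each $p_\nu$ contains exactly one copy of $s_{(n)}$; this number is at most $11$, while Proposition~\ref{prop2.4}(1) gives the multiplicity of $s_{(n)}$ in $\psi_n$ as $p(n)\geq 11$ for $n\geq 6$, so this group is harmless. For every remaining non-sign irreducible, one checks directly from the tabulated coefficients in Proposition~\ref{prop2.15} that the positive multiplicities never exceed $4$ (the value $4$ being attained, for instance, at $s_{(3,1^{n-3})}$ and at the hooks $s_{(\ast,5,1^t)}$ in items~(10)--(11)). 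By Lemma~\ref{lem2.6} each such irreducible appears in $\psi_n$ with multiplicity at least $4$ for even $n$ and at least $5$ for odd $n$, so all of these contributions are absorbed.

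The main obstacle is the sign representation $s_{(1^n)}$, which is precisely the irreducible excluded from the lower bound of Lemma~\ref{lem2.6} and so must be handled by a separate count. Reading off Proposition~\ref{prop2.15}, the coefficient of $s_{(1^n)}$ in $S_\mu$ vanishes unless $n$ is even, in which case it is at most $2$, the maximum $2$ occurring only for $\mu=(n-4,3,1)$ (item~(8)). By Proposition~\ref{prop2.4}(2) the multiplicity of $s_{(1^n)}$ in $\psi_n$ is the number of partitions of $n$ into distinct odd parts; for even $n$ this equals $do_n$, and $do_n\geq 2$ as soon as $n\geq 8$, since the two partitions $\{1,n-1\}$ and $\{3,n-3\}$ are already distinct and admissible. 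Hence the sign stays nonnegative under the subtraction throughout the generic range, and the delicate balance $do_n\geq 2$ is exactly what makes the item~(8) bound go through.

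Finally I would dispose of the finitely many small cases, where both the reverse lexicographic chain above and the reduced forms of Proposition~\ref{prop2.15} degenerate (coincidences among parts force shorter or different chains, and $do_6=1$ makes $n=6$ genuinely exceptional for the sign). These cases, say $6\leq n\leq 9$, are verified directly from the Schur expansions of $\psi_\mu$; the expansions of $\psi_n$ for $n\leq 10$ tabulated in \cite{Su1} suffice. Assembling the generic argument with these direct checks yields Schur-positivity of $\psi_\mu$ for all $\mu\geq(n-4,1^4)$ and $n\geq 6$. The one place demanding genuine care, beyond the bookkeeping already carried out in Proposition~\ref{prop2.15}, is the sign representation, and it is fortunate that its coefficient in $S_\mu$ never climbs past the modest bound $do_n$ afforded by $\psi_n$.
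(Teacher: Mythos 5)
Your proposal follows the paper's own proof in all essentials: the same subtraction identity, the same reliance on the reduced partial sums of Proposition~\ref{prop2.15}, the bound of $+4$ on positive coefficients absorbed by Lemma~\ref{lem2.6}, and the comparison of $p(n)$ with the interval length for the trivial representation. Your decision to treat $s_{(1^n)}$ separately is in fact a point where you are more careful than the paper: Lemma~\ref{lem2.6} excludes the \emph{sign}, not the trivial representation (the paper's proof of Theorem~\ref{thm2.16} slightly misquotes it), so a separate argument for $s_{(1^n)}$ genuinely is needed, and Proposition~\ref{prop2.4}(2) is the right tool for it.

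However, your reading of the sign coefficients contains a concrete error. You claim the coefficient of $s_{(1^n)}$ in $S_\mu$ vanishes unless $n$ is even; but item~(1) of Proposition~\ref{prop2.15} shows that $p_n$ contains $(-1)^{n-1}s_{(1^n)}$, which is $+s_{(1^n)}$ for \emph{odd} $n$. Thus for odd $n$ and $\mu=(n-1,1)$ the subtracted sum $S_\mu=p_n$ has sign coefficient $+1$, not $0$ (this is also visible in Corollary~\ref{cor2.17}(2), where the multiplicity drops by one when $n$ is odd). The repair is immediate — Proposition~\ref{prop2.4}(2) gives $\langle\psi_n,s_{(1^n)}\rangle\geq 1$ for all $n\neq 2$ — but note there is no slack at all: for $n=5,7$ the number of self-conjugate partitions is exactly $1$, so $\langle\psi_{(n-1,1)},s_{(1^n)}\rangle=0$, and your assertion as written would have missed the one case where the sign multiplicity is actually driven to zero. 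With that one-line correction, together with your direct verification of the degenerate cases $n=6,7$ (where $(n-4,4)$ and some neighbouring shapes fail to be partitions, so the chain and the formulas of Proposition~\ref{prop2.15} must be adjusted — something the paper itself leaves largely implicit), your argument is complete and coincides with the paper's.
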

\begin{proof} 
It is clear from the definition that 
 \begin{equation}\psi_{\mu}=\psi_n-\sum_{\nu\vdash n:\mu<\nu\leq (n)} p_\nu.\end{equation}
 We use the partial sum computations in Proposition~\ref{prop2.15}. 
Observe that, in each of those expansions, no Schur function appears with multiplicity greater than $+4$, except for the trivial representation, which appears with multiplicity equal to the number of partitions in the interval $(\mu, (n)].$  

For example, for $\psi_{(n-4,3,1)}$ we see from (8) that we need to subtract from $\psi_n$ a virtual representation in which no multiplicity in the sum exceeds $+3$, other than the multiplicity of the trivial representation which is now 8. 

Similarly to obtain $\psi_{(n-4,2^2)},$ from (9) it follows that we subtract from $\psi_n$ a representation in which no multiplicity in the sum exceeds $+3$, other than the multiplicity of the trivial representation which is now 9.

In fact the largest  multiplicity (in absolute value) of $+4$ is obtained for the first time in the penultimate sum (10) of Proposition~\ref{prop2.15}, $\sum_{(n)\geq \mu\geq (n-4,2^2)} p_\mu$  (for the two irreducibles $(n-3,3), (3,1^{n-3})$).   In the last sum of Proposition~\ref{prop2.15}, viz.
$\sum_{(n)\geq \mu\geq (n-4,2,1^2)} p_\mu,$ again the largest multiplicity in absolute value is 4, and this multiplicity occurs several times.

 
 But Lemma~\ref{lem2.6} guarantees that $\psi_n$ has multiplicity at least 4 for each irreducible except the trivial module, and hence, 
examining the partial sums in Proposition~\ref{prop2.15}, it is clear that the right-hand side is Schur-positive in all the cases enumerated.  The fact that all the expressions are reduced (no further simplification occurs) is important in this argument.  The multiplicity of the trivial representation is the partition number $p(n),$ which is certainly at least the length of the interval $(\mu, (n)].$  The theorem is proved.  \end{proof}

Together Theorems~\ref{thm2.11} and~\ref{thm2.16} complete the proof of Theorem~\ref{thm1.3}.  

It is difficult to see how to generalise this argument.  Already for $n=8,9,10,$ computation with Maple shows that in the Schur function expansion of the sum $\sum_{\lambda\geq (n-4,1^4)} p_\lambda, $  $s_{(n-3,3)}$ occurs with multiplicity  $-6;$  $s_{(n-4,4)}$ occurs with multiplicity $-5$ when $n=9,10,$ and $s_{(n-4,3,1)}$ occurs with multiplicity $-5$ for $n=8.$  The lower bound that we were able to establish in Lemma~\ref{lem2.6} is therefore insufficient to guarantee Schur positivity of $\psi_\mu$ by these arguments, in the case when $\mu$ is strictly below $(n-4,1^4)$ in reverse lexicographic order.

From Theorem~\ref{thm2.11} and Proposition~\ref{prop2.4} it is also easy to derive the following information about the multiplicity of the sign representation.  Clearly if $\mu$ and $\nu$ are consecutive partitions in reverse lexicographic order, this multiplicity differs by 1 in absolute value, since $\psi_\mu-\psi_\nu=\pm p_\nu$.  It is also clear that the multiplicity of the trivial representation decreases by one as we descend the chain from $(n)$ to $(1^n).$

Denote by $\langle, \rangle$  the inner product on the ring of symmetric functions for which the Schur functions form an orthonormal basis.
\begin{corollary}\label{cor2.17}  We have 
\begin{enumerate}[label=(\arabic*)]
\item $\langle \psi_n, s_{(1^n)}\rangle= \langle \psi_{(n-2,2)}, s_{(1^n)}\rangle=$ the number of partitions of self-conjugate partitions of $n.$
\item $\langle \psi_{(n-1,1)}, s_{(1^n)}\rangle=\langle \psi_n, s_{(1^n)}\rangle +(-1)^n;$
\item $\langle \psi_{(n-2,1^2)}, s_{(1^n)}\rangle=\langle \psi_n, s_{(1^n)}\rangle -(-1)^n;$
\item $\langle \psi_{(n-3,3)}, s_{(1^n)}\rangle=\langle \psi_n, s_{(1^n)}\rangle.$
\item $\langle \psi_{(2^k, 1^{n-2k})}, s_{(1^n)}\rangle=(-1)^k +
\langle \psi_{(2^{k-1}, 1^{n-2k+2})}, s_{(1^n)}\rangle 
,\quad   1\leq k\leq \lfloor\frac{n}{ 2} \rfloor .$
\item $\langle \psi_{(3, 1^{n-3})}, s_{(1^n)}\rangle =
1+ \langle \psi_{(2^k, 1^{n-2k})}, s_{(1^n)}\rangle$, 
where  $k= \lfloor\frac{n}{ 2} \rfloor$
\end{enumerate}
\end{corollary}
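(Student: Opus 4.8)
The plan is to reduce every statement to a single bookkeeping principle governing how the multiplicity of $s_{(1^n)}$ changes as we move along the chain. Recall from~\eqref{1.1} that $\langle \psi_\mu, s_{(1^n)}\rangle = \sum_{(1^n)\leq \lambda\leq \mu}(-1)^{n-\ell(\lambda)}$, and that $\langle p_\lambda, s_{(1^n)}\rangle = (-1)^{n-\ell(\lambda)}$ is the value of the sign character on the class of cycle type $\lambda$. Hence, whenever $\mu^+$ covers $\mu$ in reverse lexicographic order, the relation $\psi_{\mu^+}=\psi_\mu+p_{\mu^+}$ gives
\[
\langle \psi_{\mu^+}, s_{(1^n)}\rangle = \langle \psi_\mu, s_{(1^n)}\rangle + (-1)^{n-\ell(\mu^+)}.
\]
Writing $N$ for the number of self-conjugate partitions of $n$, Proposition~\ref{prop2.4}(2) supplies the base value $\langle \psi_n, s_{(1^n)}\rangle = N$.

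For Parts (1)--(4) I would descend the top of the chain one cover at a time, subtracting the appropriate sign at each step. The relevant covering relations are $(n)\succ(n-1,1)\succ(n-2,2)\succ(n-2,1^2)\succ(n-3,3)$, with lengths $1,2,2,3,2$ respectively. Solving the displayed recurrence for $\langle \psi_\mu, s_{(1^n)}\rangle$ in terms of $\langle \psi_{\mu^+}, s_{(1^n)}\rangle$, the quantities $(-1)^{n-\ell(\mu^+)}$ to be subtracted along the four covers are $(-1)^{n-1},(-1)^n,(-1)^n,(-1)^{n-1}$, so the successive values telescope to $N+(-1)^n$, $N$, $N-(-1)^n$, and $N$ at $(n-1,1)$, $(n-2,2)$, $(n-2,1^2)$, $(n-3,3)$; these are exactly Parts (2), (1), (3), (4).

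For Parts (5) and (6) I would invoke the explicit recurrences of Theorem~\ref{thm2.11} rather than identifying individual covers. From Theorem~\ref{thm2.11}(1), $\psi_{(2^k,1^{n-2k})}=\psi_{(2^{k-1},1^{n-2k+2})}+p_2^k p_1^{n-2k}$, and since $p_2^k p_1^{n-2k}=p_{(2^k,1^{n-2k})}$ has length $\ell=n-k$, its pairing with $s_{(1^n)}$ is $(-1)^{n-(n-k)}=(-1)^k$, which yields Part (5). Likewise Theorem~\ref{thm2.11}(2) gives $\psi_{(3,1^{n-3})}=\psi_{(2^k,1^{n-2k})}+p_3 p_1^{n-3}$ with $k=\lfloor n/2\rfloor$, and $p_3 p_1^{n-3}=p_{(3,1^{n-3})}$ has length $n-2$, so its pairing with $s_{(1^n)}$ is $(-1)^{n-(n-2)}=1$, giving Part (6).

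The argument is essentially bookkeeping, and there is no serious obstacle. The only points requiring care are attaching the sign $(-1)^{n-\ell}$ to the correct partition (the one being added when ascending, equivalently the cover $\mu^+$ when descending) and confirming the covering relations at the top of the chain used in Parts (1)--(4). Both follow immediately from the description of reverse lexicographic order and require nothing beyond tracking parities.
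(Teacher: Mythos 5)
Your proposal is correct and follows essentially the same route the paper intends: the paper derives Corollary~\ref{cor2.17} from equation~\eqref{1.1} together with Proposition~\ref{prop2.4}(2) for the base value $\langle\psi_n,s_{(1^n)}\rangle$ and the observation that consecutive partial sums differ by the sign-character value $\pm1$, plus the recurrences of Theorem~\ref{thm2.11} for the bottom of the chain. Your sign bookkeeping along the covers $(n)\succ(n-1,1)\succ(n-2,2)\succ(n-2,1^2)\succ(n-3,3)$ and your use of $\ell((2^k,1^{n-2k}))=n-k$ and $\ell((3,1^{n-3}))=n-2$ check out exactly.
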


  In Example~\ref{ex1.4}, we have underlined  the increasing runs, and italicised  the decreasing runs, of length 3 or more, in the multiplicity $\langle \psi_\mu, s_{(1^n)}\rangle.$ It is unclear how to predict the runs of 1's and $(-1)$'s, i.e. the increasing and decreasing sequences in the partial sums. The longest such run in the examples occurs in $S_{13}$, namely $1,2,3,4.$ The corresponding partitions are 
$$  (6, 5, 2)< (6, 6, 1)<(7, 1^6), $$ all with sign $+1.$

%
\vskip .1in
Schur positivity also holds for the following (unsaturated) chain of partitions in reverse lexicographic order.

\begin{proposition}\label{prop2.18}
Let $T_n=\{\lambda\vdash n: \lambda=(n-r, 1^r), 0\leq r\leq n-1\}.$  Then $Hk_n=\sum_{\mu\in T_n} p_\mu$ is Schur-positive.  In fact $Hk_n$ contains all irreducibles unless $n$ is even, in which case only the irreducible indexed by $(1^n)$ does not appear.
\end{proposition}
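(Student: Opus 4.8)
The plan is to peel off the long cycle. Since $p_{(n-r,1^r)}=p_{n-r}\,p_1^{\,r}$, writing $\ell=n-r$ gives $Hk_n=\sum_{\ell=1}^{n}p_\ell\,p_1^{\,n-\ell}$, and isolating the term $\ell=n$ produces the recursion
\[ Hk_n=p_1\,Hk_{n-1}+p_n,\qquad Hk_1=p_1. \]
I would prove the proposition by induction on $n$ from this identity. Pairing it with $s_\lambda$ and using the adjoint Pieri rule $p_1^{\perp}s_\lambda=\sum_{\nu\lessdot\lambda}s_\nu$ together with the Murnaghan--Nakayama expansion $p_n=\sum_{i=0}^{n-1}(-1)^i s_{(n-i,1^i)}$ (Lemma~\ref{lem2.13}(1)) yields
\[ \langle Hk_n,s_\lambda\rangle=\sum_{\nu\lessdot\lambda}\langle Hk_{n-1},s_\nu\rangle+\varepsilon_\lambda, \]
where $\varepsilon_\lambda=(-1)^{\ell(\lambda)-1}$ when $\lambda$ is a hook and $\varepsilon_\lambda=0$ otherwise. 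Two structural facts drive everything: $p_n$ is supported on hooks, so non-hook coefficients are governed purely by the Schur-positive operator $f\mapsto p_1 f$; and a hook gains only hooks when a box is added, so the hook multiplicities $c^{(n)}_i:=\langle Hk_n,s_{(n-i,1^i)}\rangle$ obey the self-contained Pascal-type recurrence $c^{(n)}_i=c^{(n-1)}_i+c^{(n-1)}_{i-1}+(-1)^i$.

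For a non-hook $\lambda$ the step is immediate: $\varepsilon_\lambda=0$, and by the inductive hypothesis the only irreducible that can be absent from $Hk_{n-1}$ is $(1^{n-1})$, which arises as a corner of $\lambda$ only when $\lambda\in\{(1^n),(2,1^{n-2})\}$, both hooks. Hence every corner $\nu\lessdot\lambda$ of a non-hook already occurs in $Hk_{n-1}$, forcing $\langle Hk_n,s_\lambda\rangle\ge 1$. For hooks I would run the recurrence for $c^{(n)}_i$: when $i$ is even the summand $+1$ makes positivity automatic, and the top coefficient reduces to $c^{(n)}_{n-1}=c^{(n-1)}_{n-2}+(-1)^{n-1}$, which evaluates to $1$ for odd $n$ and $0$ for even $n$ — exactly the asserted vanishing of $s_{(1^n)}$ when $n$ is even. (As a cross-check, $\langle Hk_n,s_{(1^n)}\rangle=\sum_{\ell=1}^n(-1)^{\ell-1}$ by evaluating the sign character on the hook classes.)

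The one genuine obstacle is the odd-leg boundary hook $\lambda=(2,1^{n-2})$ for odd $n$: here the arm corner is precisely the vanishing $(1^{n-1})$, so the naive bound gives only $c^{(n)}_{n-2}\ge 0$. To force strict positivity I would strengthen the inductive hypothesis to carry the extra estimate $c^{(m)}_{m-2}\ge 2$ for even $m$. The recurrence then gives $c^{(n)}_{n-2}=c^{(n-1)}_{n-3}-1\ge 1$ at the odd step, while at the even step it gives $c^{(n)}_{n-2}\ge 3$, so the strengthened hypothesis is preserved. Equivalently, this coefficient can be pinned down outright via the hook-character generating function
\[ \sum_{i=0}^{n-1}\chi^{(n-i,1^i)}(\mu)\,t^i=\frac{1}{1+t}\prod_{j}\bigl(1-(-t)^{\mu_j}\bigr), \]
which, summed over the hook classes $\mu=(\ell,1^{n-\ell})$, yields $\langle Hk_n,s_{(2,1^{n-2})}\rangle=(n-1)/2$ for odd $n$. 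With this single case absorbed, the induction closes and delivers exactly the stated support: all irreducibles for odd $n$, and all but $s_{(1^n)}$ for even $n$.
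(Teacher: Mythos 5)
Your proof is correct, and its skeleton coincides with the paper's own proof of Proposition~\ref{prop2.18}: the same recursion $Hk_n=p_1\,Hk_{n-1}+p_n$, the same Frobenius-reciprocity recurrence (the paper's cases (A)--(D)), and the same corner argument disposing of non-hooks. The genuine divergence is in how the hook coefficients are pushed through the induction, which is the crux in both arguments. The paper anchors at explicit closed forms $\langle s_{(n-1,1)},Hk_n\rangle={n-1\choose 2}$ and $\langle s_{(n-2,1^2)},Hk_n\rangle=n-2+{n-1\choose 3}$, and then telescopes its recurrence (D) diagonally --- decreasing $n$ and the leg length together --- to conclude that every hook of leg at most $n-2$ occurs with multiplicity at least $3$; the troublesome hook $(2,1^{n-2})$ is thereby absorbed with room to spare, at the price of a double telescoping whose induction hypothesis is stated rather loosely. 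You instead run a clean one-step induction in $n$, correctly isolate the unique degenerate case ($\lambda=(2,1^{n-2})$ with $n$ odd, where one corner is the vanishing column $(1^{n-1})$), and repair it either by the minimal strengthened invariant $c^{(m)}_{m-2}\ge 2$ for even $m$ --- whose preservation you verify, getting $\ge 3$ at even steps --- or by the exact evaluation $\langle Hk_n,s_{(2,1^{n-2})}\rangle=(n-1)/2$ for odd $n$ via the hook-character generating function; both closures are valid, and your exact value agrees with column $r=n-2$ of the paper's Table 1 (values $1,2,3,4,5$ for $n=3,5,7,9,11$). What the paper's route buys is stronger output: a uniform lower bound of $3$ on hook multiplicities and the binomial formulas that seed Table 1. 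What yours buys is economy --- the strengthening is localized to a single partition --- plus a closed form for the one delicate coefficient, which the paper never computes. One wording slip worth fixing: ``a hook gains only hooks when a box is added'' should read ``the corners of a hook are again hooks'' (it is removal of a box, via $p_1^{\perp}$, that stays among hooks); the recurrence you then write, $c^{(n)}_i=c^{(n-1)}_i+c^{(n-1)}_{i-1}+(-1)^i$, is nevertheless exactly the paper's $a_{n,r}=(-1)^r+a_{n-1,r}+a_{n-1,r-1}$.
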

\begin{proof} Clearly $Hk_{n}=p_{n}+p_1 Hk_{n-1}.$ 
Since $p_n=\sum_{r=0}^n (-1)^r s_{(n-r, 1^r)},$
 by Frobenius reciprocity,  we have 

$\langle Hk_n, s_\lambda\rangle=\begin{cases} \langle Hk_{n-1}, s_{\lambda/(1)}\rangle, & \text{ if }\lambda \text{ is not a hook}; \quad (A)\\
\langle  Hk_{n-1}, s_{( 1^{n-1})}\rangle + (-1)^{n-1}, & \lambda=(1^n); \quad (B)\\
\langle  Hk_{n-1}, s_{(n-1)}\rangle + 1, & \lambda=(n); \quad(C)\\
\langle  Hk_{n-1}, s_{(n-r-1, 1^r)}\rangle \\
+\langle  Hk_{n-1}, s_{(n-r, 1^{r-1})}\rangle +(-1)^r, & \lambda=(n-r, 1^r),1\leq r\leq n-2.  \quad (D)
\end{cases}$

We verify that $Hk_1=s_{(1)},$ $Hk_2=p_2+p_1^2=2 s_{(2)},$ 
$Hk_3=\psi_3=3 s_{(3)}+ s_{(2,1)}+ s_{(1^3)},$ 
$Hk_4=4 s_{(4)}+3 s_{(3,1)}+3 s_{(2, 1^2)}+ s_{(2,2)},$ 
$Hk_5=5 s_{(5)}+6 s_{(4,1)}+7 s_{(3, 1^2)}+ 2s_{(2,1^3)}+s_{(1^5)} + 4 s_{(3,2)}+ 4 s_{(2^2,1)}.$ 

First we claim that 
\begin{enumerate}[label=(\arabic*)]
\item $\langle Hk_n, s_{(1^n)}\rangle=\begin{cases} 1, &\ n \text{  odd};\\
0, &\text{otherwise}.\end{cases}$
\item $ \langle Hk_n, s_{(n)}\rangle=n$ for all n.
\item $\langle Hk_{n-1}, s_{(n-r, 1^r)}\rangle\geq 1$ for all $r.$
\end{enumerate}

Claims (1) and (2) are immediate by an easy induction from (B) and (C) above.

We will show that claim (3) also follows by induction. It is clearly true for $r=0.$ For $r=1,$ we have, using (2),  the recurrence
 \begin{center}
$\langle s_{(n-1,1)}, Hk_n\rangle-\langle s_{(n-2,1)}, Hk_{n-1}\rangle=(n-2),$ 
\end{center} and hence, since $\langle s_{(2,1)}, Hk_3\rangle =1,$ 
\begin{center}
$\langle s_{(n-1,1)}, Hk_n\rangle=\sum_{r=2}^{n-1} (n-r)={{n-1}\choose{2}}\geq 3$ if $n\geq 4.$ \end{center}
Similarly we have, for $r=2,$ 
\begin{center}
$\langle s_{(n-2,1^2)}, Hk_n\rangle-\langle s_{(n-3,1^2)}, Hk_{n-1}\rangle =(-1)^2+\langle s_{(n-2,1)}, Hk_{n-1}\rangle 
=1+{n-2\choose 2} $ if $n\geq 4.$ \end{center} 
Taking this recurrence down to the last line, namely
\begin{center}
$\langle s_{(2,1^2)}, Hk_4\rangle-\langle s_{(1,1^2)}, Hk_{3}\rangle =(-1)^2 +\langle s_{(2,1)}, Hk_3\rangle,$\end{center} 
we have, 

\begin{center}
$\langle s_{(n-2,1^2)}, Hk_n\rangle =\langle s_{(1,1^2)}, Hk_{3}\rangle+(n-3) +
\sum_{j=2}^{n-2} {j\choose 2} =n-2 +{n-1\choose 3}\geq 3, 
\text{ if }  n\geq 4.$ \end{center}

Our induction hypothesis will be that $\langle s_{(n-r,1^r)}, Hk_n\rangle \geq 3$ for some $r$ such that $r\leq n-2.$ This has now been verified  for $r=0,1,2.$  Then it follows from (D) above, using the same telescoping sum, that 

 
\begin{center}
$\langle s_{(n-r,1^r)}, Hk_n\rangle 
- \langle s_{(n-(r-2),1^{r-(r-2)})}, Hk_{n-r+2}\rangle \geq 
\langle s_{(n-(r-1),1^{r-(r-1)})}, Hk_{n-r+1}\rangle
 +\sum_{i=2}^r (-1)^i\geq 0$ since $n\geq r+2,$
\end{center}
and hence 
\begin{center} $\langle s_{(n-r,1^r)}, Hk_n\rangle \geq 
\langle s_{(n-(r-2),1^{2})}, Hk_{n-r+2}\rangle \geq 3$ \end{center}
by induction hypothesis.

This establishes the induction step and hence claim (3). In view of (A) above, the positivity of the multiplicities of hooks in $Hk_n$ implies  that $Hk_n$ is Schur-positive for all $n.$  The last statement is clear.
%
%
\end{proof}

 The representations $Hk_n$ give rise to an interesting family of nonnegative integers.
 
 Let $a_{n,r}=\langle s_{(n-r, 1^r)}, Hk_n\rangle, 0\leq r\leq n-1.$ Table 1 gives the first few values of this sequence.
 
 Using $Hk_n=p_n+p_1Hk_{n-1},$ from $(D)$ in Proposition~\ref{prop2.18} we have the recurrence
 $$a_{n,r}=(-1)^r+a_{n-1,r}+a_{n-1,r-1}, 1\leq r\leq n-2, n\geq 2$$
 and $a_{n,0}=n,  a_{n,n-1}=\delta_{n \text{ odd}}$ for all $n.$
 Thus $a_{n,1}-a_{n-1,1}=n-2,$ giving $a_{n,1}={n-1\choose 2}$ 
 for $n\geq 2.$ 
 
%
\begin{center}
{\small Table 1: $a_{n,r},$ row $n\geq 1,$ column $r\geq 0$}
\begin{tabular}{|c|c|c|c|c|c|c|c|c|c|c|c|}\hline
1  & & &  &  & & & & & & &\\ \hline
2  & 0 & & & & & & & & & &\\ \hline
3  &1 &1 &  &  & & & & & & & \\ \hline
4 &3  &3 &0  &  & & & & & & &\\ \hline 
5&6 &7 &2  &1  & & & & & & &\\ \hline
6 &10 &14 &8  &4  &0 & & & & & &\\ \hline
7 &15 &25 &21  &13  &3 &1 & & & & &\\ \hline
8 &21 &41 &45  &35  &15 &5 &0 & & & &\\ \hline
9 &28 &63 &85  &81  &49 &21 &4 &1 & & &\\ \hline
10 &36 &92 &147  &167  &129 &71 &24 &6 &0 & &\\ \hline
11&45 &129 &238  &315  &295 &201 &94 &31 &5 &1 &  \\ \hline
12 &55 &175 &366  &554  &609 &497 &294 &126 &35 &7 &0  \\ \hline
\end{tabular}
\end{center}
 \vskip.2in
 Define $b_{n,r}=\langle s_{n-r,r}, Hk_n\rangle, 0\leq r\leq \frac{n}{2}.$   Then $(A)$ gives
 $$b_{n,r}=b_{n-1,r}+b_{n-1,r-1}, r\geq 2,\quad b_{n,1}=a_{n,1}= {n-1\choose 2}.$$ 

 Let $Lie_n$ denote the $S_n$-module obtained by inducing a primitive $n$th root of unity from the cyclic subgroup generated by an $n$-cycle up to $S_n.$  It is a well-known fact that $Lie_n$ is also the representation of the symmetric group acting on the multilinear component of the free Lie algebra (\cite[Ex. 7.88-89]{St4EC2}). Write $Lie$ for $\sum_{n\geq 1} {\rm ch\,}Lie_n.$ 
Recall  from the Introduction that we denote by  $f_n$  the conjugacy action on the $n$-cycles, and that $f[g]$ denotes plethysm.  The functions $Hk_n$ satisfy an interesting plethystic identity.  In order to establish this, we need the following connection between $Lie$ and the conjugacy action.  In keeping with the notation of \cite{Su3}, we will write $Conj_n$ for $f_n$ in the remainder of this section.

\begin{proposition}\label{prop2.19}  \cite[Theorem~2.23]{Su3} 
 \begin{equation*}\sum_{m\geq 1} p_m[Lie]=\sum_{n\geq 1} Conj_n.
\end{equation*}
\end{proposition}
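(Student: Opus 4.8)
The plan is to reduce the identity to the two classical power-sum expansions of $Lie_n$ and $Conj_n$ together with an elementary Dirichlet-convolution identity. First I would record that both sides are built from inductions out of the cyclic group $C_n=\langle(1\,2\,\cdots\,n)\rangle$, which is simultaneously the group generated by an $n$-cycle and the centralizer of that $n$-cycle in $S_n$. Thus $Conj_n=\mathrm{ch}\,\mathrm{Ind}_{C_n}^{S_n}\mathbf 1$ and $Lie_n=\mathrm{ch}\,\mathrm{Ind}_{C_n}^{S_n}\zeta$ for a faithful (primitive) linear character $\zeta$ of $C_n$. Applying the Frobenius-characteristic formula for an induced character and grouping the powers $\sigma^k$ of the $n$-cycle $\sigma$ according to $g=\gcd(k,n)$ (so that $\sigma^k$ has cycle type $(n/g)^{g}$, and there are $\phi(n/g)$ values of $k$ giving that type) yields
\[
\mathrm{ch}\,Lie_n=\frac1n\sum_{e\mid n}\mu(e)\,p_e^{\,n/e},\qquad
\mathrm{ch}\,Conj_n=\frac1n\sum_{e\mid n}\phi(e)\,p_e^{\,n/e},
\]
the first being the classical free-Lie-algebra formula (the inner Ramanujan sum collapses to $\mu(n/g)$ because $\zeta$ is primitive), the second the totient formula (the inner character sum collapses to $\phi(n/g)$ since the character is trivial).

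Next I would feed the Lie expansion into the plethysm. The mechanism is that, for each fixed $m$, the operator $p_m[\,\cdot\,]$ is the unique $\mathbf{Q}$-algebra endomorphism of the ring $\Lambda$ of symmetric functions sending $p_j\mapsto p_{jm}$; in particular $p_m[p_e^{\,d}]=p_{em}^{\,d}$ and $p_m$ passes through the rational coefficients. Reindexing by $n=ed$ gives $Lie=\sum_{e,d\ge1}\frac{\mu(e)}{ed}\,p_e^{\,d}$, whence
\[
\sum_{m\ge1}p_m[Lie]=\sum_{m\ge1}\ \sum_{e,d\ge1}\frac{\mu(e)}{ed}\,p_{em}^{\,d}.
\]
Extracting the homogeneous component of degree $n$ forces $emd=n$; setting $a=em$ and collecting the monomial $p_a^{\,n/a}$, its coefficient becomes $\frac1n\sum_{m\mid a}m\,\mu(a/m)$.

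Finally I would invoke the arithmetic identity $\phi=\mathrm{Id}*\mu$ (Dirichlet convolution), i.e. $\sum_{m\mid a}m\,\mu(a/m)=\phi(a)$, which turns the degree-$n$ part of $\sum_{m}p_m[Lie]$ into exactly $\frac1n\sum_{a\mid n}\phi(a)\,p_a^{\,n/a}=\mathrm{ch}\,Conj_n$; summing over $n$ then gives the proposition. The only genuine obstacle is the bookkeeping: one must carry out the reindexings $n=ed$ and $a=em$ carefully so that, for each fixed $a\mid n$, the pairs $(e,m)$ with $em=a$ range over precisely the factorizations of $a$, with nothing double-counted or dropped; once the sums are organized this way the number theory finishes the argument. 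As a sanity check I would verify the case $n=2$ by hand, where $p_1[Lie_2]+p_2[Lie_1]=e_2+p_2=\tfrac12(p_1^2+p_2)=h_2=Conj_2$.
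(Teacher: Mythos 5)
Your proof is correct. Note that the paper does not actually prove Proposition~\ref{prop2.19}; it imports the statement from \cite[Theorem 2.23]{Su3}, so there is no internal argument to compare against. Your derivation is the standard one and, in substance, the same as the cited source's: both sides are inductions from the cyclic group $C_n$, giving $\mathrm{ch}\,Conj_n=\frac1n\sum_{e\mid n}\phi(e)\,p_e^{\,n/e}$ and $\mathrm{ch}\,Lie_n=\frac1n\sum_{e\mid n}\mu(e)\,p_e^{\,n/e}$; then the fact that $p_m[\,\cdot\,]$ is the $\mathbb{Q}$-algebra endomorphism $p_j\mapsto p_{jm}$, together with the reindexings $n=ed$, $a=em$ and the convolution identity $\sum_{m\mid a}m\,\mu(a/m)=\phi(a)$, closes the argument exactly as you wrote, and your $n=2$ sanity check is also right.
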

 \begin{proposition}\label{prop2.20} Let $W_n$ be the representation with characteristic  $Hk_n.$  Then $W_n$ satisfies the following properties:
  \begin{enumerate}[label=(\arabic*)]
  \item The restriction $W_{n+1}\downarrow_{S_n}$ from $S_{n+1}$ to $S_n$ is isomorphic to 
  the direct sum of $W_n$ and the induced module $(W_n\downarrow_{S_{n-1}})\uparrow^{S_n}.$
  \item 
  $Conj_n$ is the degree $n$ term in 
  $$ (1-Lie)\cdot \sum_{n\geq 1}Hk_n[Lie].$$
 \end{enumerate}
 \end{proposition}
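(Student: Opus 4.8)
The plan is to translate both assertions into identities in the ring of symmetric functions via the Frobenius characteristic, and then to exploit the recurrence $Hk_{n}=p_{n}+p_1 Hk_{n-1}$ (with $Hk_0=0$) established in the proof of Proposition~\ref{prop2.18}. Under the characteristic map, restriction $\downarrow^{S_n}_{S_{n-1}}$ corresponds to the skewing operator $p_1^{\perp}=\partial/\partial p_1$ (differentiation in the power-sum variable $p_1$), while induction $\uparrow^{S_n}_{S_{n-1}}$ corresponds to multiplication by $p_1$. Hence the characteristic of $W_{n+1}\downarrow_{S_n}$ is $\partial Hk_{n+1}/\partial p_1$, that of $W_n$ is $Hk_n$, and that of $(W_n\downarrow_{S_{n-1}})\uparrow^{S_n}$ is $p_1\,\partial Hk_n/\partial p_1$.

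For Part (1) I would simply differentiate the recurrence. Applying $\partial/\partial p_1$ to $Hk_{n+1}=p_{n+1}+p_1 Hk_n$, and using $\partial p_{n+1}/\partial p_1=0$ (since $n+1\ge 2$) together with the product rule, gives $$\frac{\partial Hk_{n+1}}{\partial p_1}=Hk_n+p_1\,\frac{\partial Hk_n}{\partial p_1},$$ which is exactly the claimed identity of characteristics. As both sides are characteristics of genuine (non-virtual) modules and the characteristic map is injective, the module isomorphism follows.

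For Part (2) the idea is to pass to generating functions in the completed ring $\widehat{\Lambda}$ and to use the standard fact that, for a fixed inner function $g$ with vanishing constant term, the outer plethysm $f\mapsto f[g]$ is a ring homomorphism. Summing the recurrence over $n\ge 1$ yields $Hk=P+p_1\,Hk$, that is $$Hk\,(1-p_1)=P,\qquad Hk:=\sum_{n\ge 1}Hk_n,\quad P:=\sum_{m\ge 1}p_m.$$ Applying $(-)[Lie]$ and using that it is a ring homomorphism, together with $p_1[Lie]=Lie$ and $1[Lie]=1$ so that $(1-p_1)[Lie]=1-Lie$, I obtain $$Hk[Lie]\,(1-Lie)=P[Lie]=\sum_{m\ge 1}p_m[Lie].$$ By Proposition~\ref{prop2.19} the right-hand side equals $\sum_{n\ge 1}Conj_n$. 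Since outer plethysm is additive in its first argument, $Hk[Lie]=\sum_{n\ge 1}Hk_n[Lie]$, and extracting the degree-$n$ component of $(1-Lie)\,Hk[Lie]$ produces precisely $Conj_n$.

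The routine content lies entirely in the two dictionary facts (the operators realizing restriction and induction, i.e. the branching rule) and in the recurrence. The only genuinely delicate point is the manipulation in $\widehat{\Lambda}$ for Part (2): one must verify that $Lie$ has no degree-$0$ term so that $f[Lie]$ is well defined degree by degree, that $1/(1-p_1)$ and $1/(1-Lie)$ are legitimate formal series, and that the ring-homomorphism property of outer plethysm correctly transports the rational identity $Hk=P/(1-p_1)$ to its plethystic image. I expect this bookkeeping, rather than any single hard computation, to be the main obstacle.
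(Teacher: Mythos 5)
Your proposal is correct and follows essentially the same route as the paper: both arguments rest on the identity $\sum_{n\geq 1}p_n=(1-p_1)\sum_{n\geq 1}Hk_n$ (equivalently the recurrence $Hk_{n+1}=p_{n+1}+p_1Hk_n$), obtaining Part (1) by differentiating with respect to $p_1$ and Part (2) by applying plethysm with $Lie$ and invoking Proposition~\ref{prop2.19}. Your write-up merely makes explicit the bookkeeping (the restriction/induction dictionary and the ring-homomorphism property of $f\mapsto f[Lie]$) that the paper leaves implicit.
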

 \begin{proof} 
 The following symmetric function identity is immediate from the definition of $Hk_n:$
 \begin{equation}\label{eq2.9} \sum_{n\geq 1}p_n = (1-p_1) \sum_{n\geq 1} Hk_n. \end{equation}
 Taking partial derivatives with respect to $p_1$ gives 
 $$\frac{\partial}{\partial p_1}Hk_{n+1} = Hk_n +p_1\frac{\partial}{\partial p_1}Hk_{n},$$ 
 which is (1).
 
 Taking the plethysm of both sides of eqn.~\eqref{eq2.9} with $Lie,$ and invoking Proposition~\ref{prop2.19}, now gives (2).   \end{proof}

\section{The representations $\psi_{2^k}$ and the twisted conjugacy action}

The functions $\psi_{2^k}$ of Theorem~\ref{thm2.11} appear to have interesting properties.  We state separately the following consequence of  the proof of Theorem~\ref{thm2.11}: 
\begin{corollary}
$\psi_{2m}-h_2 p_1^{2m-2}$ is Schur-positive, and hence so is 
$\psi_{2m}-h_2^r p_1^{2m-2r},$ $1\leq r \leq m$.
\end{corollary}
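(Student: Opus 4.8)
The plan is to take the first assertion as essentially already proved inside Theorem~\ref{thm2.11}, and then bootstrap from the case $r=1$ to all $r$ by a short monotonicity argument. Recall from~\eqref{eq2.2} that $\psi_{2m}=\psi_{(2^m)}=p_{2m,T}$ is the sum of the power sums $p_\lambda$ over partitions $\lambda$ of $2m$ into parts $1$ and $2$. Setting $n=2m$ in the \emph{stronger claim}~\eqref{eq2.1} established during the proof of Theorem~\ref{thm2.11}, the expression $\psi_{2m}-h_2p_1^{2m-2}$ is precisely the $r=1$ instance, and it is Schur-positive. So nothing new is required for $r=1$.

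For $2\le r\le m$ I would show that the subtracted term $h_2^{r}p_1^{2m-2r}$ only gets \emph{smaller} in the Schur order as $r$ increases, and then restore the difference. The key identity is $p_1^2=h_2+e_2$, equivalently $p_1^2-h_2=e_2=s_{(1^2)}$. Hence for each $j$ with $2\le j\le r$ (so that $2m-2j\ge 0$),
\begin{equation*}
h_2^{j-1}p_1^{2m-2j+2}-h_2^{j}p_1^{2m-2j}=h_2^{j-1}p_1^{2m-2j}\,(p_1^2-h_2)=e_2\,h_2^{j-1}p_1^{2m-2j},
\end{equation*}
which is a product of the Schur-positive functions $e_2=s_{(1^2)}$, $h_2=s_{(2)}$ and $p_1=s_{(1)}$, hence Schur-positive by the Littlewood--Richardson rule. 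Telescoping over $j=2,\dots,r$ gives
\begin{equation*}
h_2p_1^{2m-2}-h_2^{r}p_1^{2m-2r}=\sum_{j=2}^{r} e_2\,h_2^{j-1}p_1^{2m-2j},
\end{equation*}
a sum of Schur-positive terms and therefore Schur-positive.

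Finally I would add this gap back to the base case,
\begin{equation*}
\psi_{2m}-h_2^{r}p_1^{2m-2r}=\bigl(\psi_{2m}-h_2p_1^{2m-2}\bigr)+\bigl(h_2p_1^{2m-2}-h_2^{r}p_1^{2m-2r}\bigr),
\end{equation*}
where the first bracket is Schur-positive by the $r=1$ statement and the second by the telescoping identity; a sum of Schur-positive functions is Schur-positive. I do not expect a genuine obstacle here: all the real content lies in Theorem~\ref{thm2.11}, and the only thing to verify is the monotonicity of $h_2^{r}p_1^{2m-2r}$ in the Schur order, which collapses to the single nonnegative coefficient in $p_1^2-h_2=e_2$. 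One could equivalently run a one-step induction on $r$ by multiplying the previous case by $p_1^2$ and subtracting $e_2\,h_2^{r-1}p_1^{2m-2r}$, but the telescoping form makes the positivity most transparent.
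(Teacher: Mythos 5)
Your proposal is correct and follows essentially the same route as the paper: both obtain the $r=1$ case directly from the Schur positivity of expression~\eqref{eq2.1} (the stronger claim inside the proof of Theorem~\ref{thm2.11} with $n=2m$), and both handle $r\geq 2$ via the identity $p_1^2-h_2=e_2$; your telescoping sum is just the unrolled form of the paper's one-step induction $\psi_{2m}-h_2^{r}p_1^{2m-2r}=(\psi_{2m}-h_2^{r-1}p_1^{2m-2(r-1)})+e_2h_2^{r-1}p_1^{2m-2r}$. Incidentally, your version writes the correction term $e_2h_2^{j-1}p_1^{2m-2j}$ with the factor $h_2^{j-1}$ that the paper's displayed identity omits (a harmless typo there, since positivity holds either way).
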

\begin{proof}  The second statement follows by induction from the  Schur positivity of expression~\eqref{eq2.1}, upon writing 

$\psi_{2m}-h_2^{r} p_1^{2m-2r}=\psi_{2m}-h_2^{r-1}(p_1^2-e_2) p_1^{2m-2r}=(\psi_{2m}-h_2^{r-1}p_1^{2m-2(r-1)})+e_2 p_1^{2m-2r}.
$
\end{proof}
In fact the following stronger statement appears to be true.
\begin{conjecture}\label{conj2} For $k\geq 2,$ $\psi_{2^k}-2h_2^2 p_1^{2k-4}$ is Schur-positive.  This has been verified for $k\leq 16.$ 
\end{conjecture}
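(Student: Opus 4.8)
The plan is to work entirely inside the polynomial subring $\mathbb{Z}[h_2,e_2]$, since $\psi_{2^k}=\psi_{(2^k)}$ and $2h_2^2p_1^{2k-4}=2h_2^2(h_2+e_2)^{k-2}$ both lie there. Writing $n=2k$ and using eqn.~\eqref{eq2.2} with $m=k$, one has $\psi_{(2^k)}=\sum_{t\ \mathrm{even}}\binom{k+1}{t+1}h_2^{k-t}e_2^{t}$, so the coefficient of $h_2^{k-t}e_2^{t}$ in $C_k:=\psi_{2^k}-2h_2^2p_1^{2k-4}$ is $c_t=\delta_{t\ \mathrm{even}}\binom{k+1}{t+1}-2\binom{k-2}{t}$. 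My goal is to show that $C_k$ is a nonnegative integer combination of the monomials $h_2^a e_2^b Q^c$ with $a+b+2c=k$, where $Q=h_2^2-e_2h_2+e_2^2=p_2^2+h_2e_2$. By Lemma~\ref{lem2.8} each such monomial is Schur-positive, being a product of the Schur-positive functions $h_2=s_{(2)}$, $e_2=s_{(1^2)}$ and $Q$; hence any such decomposition proves the conjecture.

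First I would record the recurrence $C_{k+1}=(h_2+e_2)C_k+p_2^{k+1}$, which follows from $\psi_{(2^{k+1})}=p_1^2\psi_{(2^k)}+p_2^{k+1}$ together with $p_1^2=h_2+e_2$. This isolates the sole obstruction to positivity, namely the signed term $p_2^{k+1}=(h_2-e_2)^{k+1}$, and it suggests an induction on $k$. The base cases are explicit: $C_2=Q+h_2e_2$ and $C_3=2h_2(Q+e_2^2)$. The inductive hypothesis I would carry is not merely that $C_k$ lies in the monoid generated by $h_2,e_2,Q$, but that its decomposition contains enough surplus (i.e. sufficiently many copies of the relevant monomials) to absorb the negative part of $p_2^{k+1}$ after multiplication by $h_2+e_2$; combining the two steps, $C_{k+2}=(Q+3h_2e_2)C_k+2h_2\,p_2^{k+1}$, may give a cleaner parity-adapted form.

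The key mechanism, and the main obstacle, is that the grouping from the proof of Theorem~\ref{thm2.11}, which pairs each negative term with a single factor of $Q$, does not suffice here: matching the negative coefficient $-2\binom{k-2}{t}$ at odd $t$ by $2\binom{k-2}{t}\,h_2^{k-t-1}e_2^{t-1}Q$ overshoots the available positive mass near $t=0$, because $\binom{k+1}{t+1}<2\bigl[\binom{k-2}{t-1}+\binom{k-2}{t}+\binom{k-2}{t+1}\bigr]$ for small $t$ (at $t=0$ this reads $k+1<2k-2$, failing for every $k\geq 4$). What rescues the argument is that higher powers $Q^c$ are also available: for example $C_4=Q^2+2h_2^2Q+3h_2^2e_2^2+2h_2e_2^3$ and $C_5=h_2(2Q^2+2h_2^2Q+6h_2^2e_2^2+2h_2e_2^3+4e_2^4)$, where the term $Q^2$ supplies exactly the compensation that the single-$Q$ grouping lacks near the boundary. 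The hard part will therefore be to prove that such a nonnegative $Q$-decomposition of $C_k$ exists for every $k$, i.e. that $C_k$ lies in the monoid generated by $h_2,e_2,Q$. I expect this to reduce, via the recurrence above, to a uniform nonnegativity statement for explicit sums of products of binomial coefficients; controlling those coefficients for the deep (large-$t$) terms introduced by $p_2^{k+1}$, where several overlapping $Q$-strips must be balanced simultaneously, is where the genuine difficulty lies, and it is the analogue here of the boundary ``miracle'' encountered at the end of the proof of Theorem~\ref{thm2.11}.
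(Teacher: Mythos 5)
You are attempting Conjecture~\ref{conj2}, which the paper does not prove: it is stated as an open conjecture, supported only by machine verification for $k\leq 16$, so there is no proof of the author's to compare yours against. Your algebraic setup is correct as far as it goes. I checked the coefficient formula $c_t=\delta_{t\ \mathrm{even}}\binom{k+1}{t+1}-2\binom{k-2}{t}$ (which follows from eqn.~\eqref{eq2.2} and $p_1^2=h_2+e_2$), the recurrences $C_{k+1}=(h_2+e_2)C_k+p_2^{k+1}$ and $C_{k+2}=(Q+3h_2e_2)C_k+2h_2p_2^{k+1}$, the decompositions $C_2=Q+h_2e_2$, $C_3=2h_2(Q+e_2^2)$, $C_4=Q^2+2h_2^2Q+3h_2^2e_2^2+2h_2e_2^3$, $C_5=h_2(2Q^2+2h_2^2Q+6h_2^2e_2^2+2h_2e_2^3+4e_2^4)$, and your observation that the single-$Q$ grouping in the style of Theorem~\ref{thm2.11} already fails at $t=0$ for $k\geq 4$ since $k+1<2(k-2)+2$. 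But this is a plan, not a proof, and the gap is exactly where you place it: the inductive hypothesis (``enough surplus to absorb the negative part of $p_2^{k+1}$'') is never formulated as a precise statement, no argument is given that it propagates from $k$ to $k+1$ or $k+2$, and the central claim --- that $C_k$ admits a nonnegative decomposition into monomials $h_2^ae_2^bQ^c$ for \emph{every} $k$ --- is deferred as ``the hard part.'' In your reformulation that claim \emph{is} the conjecture (and more), so nothing beyond $k\leq 5$ is established.

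Before investing further in this route, note a structural danger: membership in the monoid generated by $h_2,e_2,Q$ is strictly stronger than Schur positivity. In degree $4$ the two notions coincide (the Schur-positive cone inside $\mathbb{R}[h_2,e_2]$ has extreme rays $h_2^2,\,h_2e_2,\,e_2^2,\,Q$), but in degree $6$ they already differ: the function $h_2^3-2h_2^2e_2+3h_2e_2^2+e_2^3$ is Schur-positive (its expansion is $s_{(6)}+4s_{(4,2)}+6s_{(3,2,1)}+5s_{(3,1^3)}+5s_{(2^3)}+4s_{(2^2,1^2)}+5s_{(2,1^4)}+s_{(1^6)}$), yet it is not a nonnegative combination of $h_2^3,h_2^2e_2,h_2e_2^2,e_2^3,h_2Q,e_2Q$: writing it as $\alpha h_2^3+\beta h_2^2e_2+\gamma h_2e_2^2+\delta e_2^3+\epsilon\, h_2Q+\zeta\, e_2Q$ forces $\alpha+\epsilon=1$ from the $h_2^3$-coordinate and $\epsilon=2+\beta+\zeta\geq 2$ from the $h_2^2e_2$-coordinate, a contradiction. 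So the statement you propose to prove by induction is genuinely stronger than Conjecture~\ref{conj2}; it happens to hold for $k\leq 5$, but it could fail for some larger $k$ even while the conjecture is true, in which case this approach would be a dead end. Any serious attack along these lines should therefore either prove the stronger monoid-membership claim directly (e.g.\ by exhibiting an explicit decomposition whose coefficients satisfy a verifiable binomial inequality), or weaken the inductive invariant to one that is actually equivalent to, or implied by, Schur positivity.
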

\begin{remark}\label{rk3.2}  Note however  that $\psi_{2^k}-2h_2 p_1^{2k-2}$ 
is NOT Schur-positive.  This can be easily verified by computation, using Theorem~\ref{thm2.11}, for $k= 2,3,4.$
\end{remark}
However, we do have the following:
\begin{lemma}\label{lem3.3}  The symmetric function $\psi_{2^m}-h_2\psi_{2^{m-1}}$ is Schur-positive.  More generally, for $k\leq m,$ the function 
$\psi_{2^m}-h_2^k \psi_{2^{m-k}}$ is Schur-positive.
\end{lemma}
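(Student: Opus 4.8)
The plan is to reduce the general assertion to the single case $k=1$ by a telescoping identity, and then to settle $k=1$ by combining two complementary recurrences for $\psi_{2^m}$. Throughout I would use the defining expansion $\psi_{2^m}=\sum_{i=0}^m p_2^i p_1^{2m-2i}$ from Theorem~\ref{thm2.11}~(1), together with $p_1^2=h_2+e_2$ and $p_2=h_2-e_2$.

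First I would record two recurrences obtained by peeling off, respectively, the smallest and the largest summand in $\psi_{2^m}$. Extracting the $i=0$ term and factoring $p_2$ out of the remaining summands gives $\psi_{2^m}=p_1^{2m}+p_2\,\psi_{2^{m-1}}$, while extracting the $i=m$ term and factoring $p_1^2$ out of the rest gives $\psi_{2^m}=p_2^{m}+p_1^2\,\psi_{2^{m-1}}$. Adding these and using $p_2+p_1^2=2h_2$ yields
$$2\psi_{2^m}=p_1^{2m}+p_2^{m}+2h_2\,\psi_{2^{m-1}},$$
hence the key identity
$$\psi_{2^m}-h_2\,\psi_{2^{m-1}}=\tfrac12\left(p_1^{2m}+p_2^{m}\right).$$
Rewriting the right-hand side as $\tfrac12\big((h_2+e_2)^m+(h_2-e_2)^m\big)=\sum_{j\ \mathrm{even}}\binom{m}{j}h_2^{m-j}e_2^{j}$ exhibits it as a nonnegative combination of products of the Schur-positive functions $h_2=s_{(2)}$ and $e_2=s_{(1^2)}$, hence Schur-positive by the Littlewood--Richardson rule. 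This proves the case $k=1$; the boundary case $m=1$ reads $\psi_2-h_2=2h_2-h_2=s_{(2)}$, consistent with $\psi_{2^0}=1$.

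For the general statement I would telescope:
$$\psi_{2^m}-h_2^{k}\psi_{2^{m-k}}=\sum_{j=0}^{k-1} h_2^{j}\left(\psi_{2^{m-j}}-h_2\,\psi_{2^{m-j-1}}\right).$$
For each $0\le j\le k-1\le m-1$ one has $m-j\ge 1$, so the parenthesised factor is an instance of the $k=1$ case and is Schur-positive; multiplying by the Schur-positive $h_2^{j}$ and summing preserves Schur-positivity, which gives the claim.

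The only real content lies in the base case, and there the single obstacle is that $\tfrac12(p_1^{2m}+p_2^{m})$ is not visibly Schur-positive until one passes to the $(h_2,e_2)$-basis. What makes the argument succeed is that adding the two recurrences cancels the $e_2$-part of $p_2+p_1^2$ exactly, leaving the clean multiplier $2h_2$ and an even-power expansion with nonnegative coefficients. As a safeguard I would verify both recurrences in the small cases $m=2,3$ against the closed form in Theorem~\ref{thm2.10}.
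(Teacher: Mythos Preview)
Your proof is correct. The telescoping reduction to the case $k=1$ is identical to the paper's, but your treatment of the base case is different and cleaner. The paper substitutes the closed form $\psi_{2^m}=\sum_{j\text{ odd}}\binom{m+1}{j}h_2^{m+1-j}e_2^{j-1}$ from Theorem~\ref{thm2.10}, subtracts $h_2\psi_{2^{m-1}}$, and simplifies via Pascal's identity to obtain $\sum_{j\text{ odd}}\binom{m}{j-1}h_2^{m+1-j}e_2^{j-1}+e_2^m\,\mathrm{Odd}(m+1)$. You instead add the two recurrences $\psi_{2^m}=p_1^{2m}+p_2\psi_{2^{m-1}}$ and $\psi_{2^m}=p_2^m+p_1^2\psi_{2^{m-1}}$ to obtain the exact identity $\psi_{2^m}-h_2\psi_{2^{m-1}}=\tfrac12(p_1^{2m}+p_2^m)$, which then expands to the same positive combination $\sum_{i\text{ even}}\binom{m}{i}h_2^{m-i}e_2^i$. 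Your route is more self-contained (it does not rely on the closed form in Theorem~\ref{thm2.10}) and yields the pleasing closed identity above as a byproduct; the paper's route has the minor advantage that the positivity is visible one step sooner, without invoking $p_1^2=h_2+e_2$ and $p_2=h_2-e_2$.
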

\begin{proof}  Eqn.~\ref{eq2.2} of Theorem~\ref{thm2.11} gives
\begin{align*}\psi_{2^m}-h_2\psi_{2^{m-1}}
&=\sum_{\stackrel{j=1} {j \text{ odd}}}^{m+1} {{m+1}\choose {j}} h_2^{m+1-j} e_2^{j-1}
-h_2 \cdot \sum_{\stackrel{k=1} {k \text{ odd}}}^{m} {{m}\choose {k}} h_2^{m-k} e_2^{k-1}\\
&= \sum_{\stackrel{j=1} {j \text{ odd}}}^{m+1} {{m+1}\choose {j}} h_2^{m+1-j} e_2^{j-1}
-  \sum_{\stackrel{k=1} {k \text{ odd}}}^{m} {{m}\choose {k}} h_2^{m-k+1} e_2^{k-1}\\
&=\sum_{\stackrel{j=1} {j \text{ odd}}}^{m} \left[{{m+1}\choose {j}}-{m\choose j}\right] h_2^{m+1-j} e_2^{j-1} +e_2^m \text{ Odd}(m+1)\\
&=\sum_{\stackrel{j=1} {j \text{ odd}}}^{m} {m\choose j-1} h_2^{m+1-j} e_2^{j-1} +e_2^m \text{ Odd}(m+1),
\end{align*}
and this is clearly Schur-positive.  (As in the proof of Theorem~\ref{thm2.11},  $\text{ Odd}(m+1)$ is 1 if $m+1$ is odd and zero otherwise.)

The more general statement follows from the telescoping sum 
$$\psi_{2^m}-h_2^k\psi_{2^{m-k}}=
\sum_{i=1}^k h_2^{i-1}(\psi_{2^{m+1-i}}-h_2\cdot \psi_{2^{m-i}}).$$
\end{proof}

\begin{proposition}\label{prop3.4} Let $k,r\geq 1,$ and let $m=\lceil{\frac{r}{2}}\rceil.$ 
(So $m=\frac{r+1}{2}$ if $r$ is odd, and $m=\frac{r}{2}$ if $r$ is even.) We have 
$$\psi_{(3, 2^{k},1^r)} 
=\begin{cases} [\psi_{2^{m+k+1}} -h_2p_1^{r+1}\psi_{2^k}]+  p_1^{r} (2h_3+e_3)\psi_{2^k}, & r \text{ odd};\\
p_1 [\psi_{(2^{m+k+1})} -h_2p_1^{r}\psi_{2^k}] +
p_1^{r}(2h_3+e_3) \psi_{2^k}, & r \text{ even}.
\end{cases}$$

\end{proposition}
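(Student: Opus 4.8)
The plan is to compute $\psi_{(3,2^k,1^r)}$ directly from its definition as the sum of power sums over the interval $[(1^n),(3,2^k,1^r)]$, where $n=3+2k+r$, and then to reorganise the answer into the claimed form using the identity $p_3=2h_3+e_3-h_2p_1$ already exploited in the proof of Theorem~\ref{thm2.11}. The only genuine work is identifying the interval and carrying out the parity bookkeeping; there is no deep obstacle.

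First I would pin down which partitions lie in the interval. Since the reverse lexicographic order amounts to ordinary lexicographic comparison of partitions written with weakly decreasing parts, a partition $\lambda\vdash n$ satisfies $\lambda\le(3,2^k,1^r)$ precisely when either all its parts are $\le 2$, or $\lambda_1=3$ and the remaining parts form a partition $\nu$ of $2k+r$ with $\nu\le_{\mathrm{lex}}(2^k,1^r)$. Because $k\ge 1$ forces the second part of $(3,2^k,1^r)$ to equal $2$, any such $\nu$ must itself have all parts $\le 2$, and among the $\{1,2\}$-partitions of $2k+r$ the condition $\nu\le_{\mathrm{lex}}(2^k,1^r)$ merely bounds the number of $2$'s by $k$. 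Thus the partitions of largest part $3$ in the interval are exactly $(3,2^j,1^{2k+r-2j})$ for $0\le j\le k$, and I would write $\psi_{(3,2^k,1^r)}=A+B$ accordingly.

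Next I would evaluate the two pieces. The piece $B=\sum_{j=0}^k p_3p_2^jp_1^{2k+r-2j}$ factors as $p_3p_1^r\sum_{j=0}^k p_2^jp_1^{2k-2j}=p_3p_1^r\psi_{2^k}$ by Theorem~\ref{thm2.11}(1) applied to the $\{1,2\}$-partitions of $2k$. The piece $A=\sum_{\lambda\vdash n,\ \lambda_i\le 2}p_\lambda$ is exactly $\psi_{(2^M,1^{n-2M})}$ with $M=\lfloor n/2\rfloor$, i.e.\ the quantity $p_{n,T}$ of Theorem~\ref{thm2.10}. Here the parity of $r$ enters: when $r$ is odd, $n=2(m+k+1)$ is even and $M=m+k+1$, so $A=\psi_{2^{m+k+1}}$; when $r$ is even, $n=2(m+k+1)+1$ is odd, so the relation $p_{2\ell+1,T}=p_1\,p_{2\ell,T}$ of Theorem~\ref{thm2.10} gives $A=p_1\,\psi_{2^{m+k+1}}$.

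Finally I would substitute $p_3=2h_3+e_3-h_2p_1$ into $B$, obtaining $B=(2h_3+e_3)p_1^r\psi_{2^k}-h_2p_1^{r+1}\psi_{2^k}$, and combine with $A$. In the odd case this yields $[\psi_{2^{m+k+1}}-h_2p_1^{r+1}\psi_{2^k}]+p_1^r(2h_3+e_3)\psi_{2^k}$; in the even case, writing $h_2p_1^{r+1}=p_1(h_2p_1^r)$ and factoring the common $p_1$ out of $A$ as well gives $p_1[\psi_{2^{m+k+1}}-h_2p_1^r\psi_{2^k}]+p_1^r(2h_3+e_3)\psi_{2^k}$, which are precisely the two stated formulas. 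The one place to be careful is the combinatorial identification of the interval together with the parity relations tying $n$, $M$, $m$, $k$, and $r$ together, and it is exactly this parity split that produces the two cases in the statement.
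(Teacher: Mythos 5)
Your proof is correct and takes essentially the same route as the paper: the paper obtains your decomposition $\psi_{(3,2^k,1^r)}=A+B$ with $B=p_3p_1^r\psi_{2^k}$ by telescoping the cover recurrence $\psi_{\mu^+}=\psi_\mu+p_{\mu^+}$ down the chain rather than by describing the interval explicitly, and then performs the same substitution $p_3=2h_3+e_3-h_2p_1$ and the same parity analysis (via Theorem~\ref{thm2.10}) to split into the two cases. No changes are needed.
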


\begin{proof}  We have the recurrence 
$$\psi_{(3, 2^{k},1^r)}-\psi_{(3, 2^{k-1},1^{r+2})}
=p_3 p_1^r p_2^k.$$
Iterating this,  the last two lines of this recurrence are 
$$\psi_{(3, 2,1^{r+2k-2})}-\psi_{(3,1^{r+2k})}=p_3 p_1^{r+2k-2} p_{2},$$ and
$$\psi_{(3,1^{r+2k})}-\psi_{(2^{m}, 1^a)}=p_3 p_1^{r+2k} ,$$
where in the last line $a=0$ if $r+2k+3=2m$ is even, i.e. if $r$ is odd, and $a=1$ if 
$r+2k+3=2m+1$ is odd, i.e. if $r$ is even.

 This telescoping sum collapses to give
 $$\psi_{(3, 2^{k},1^r)} -\psi_{(2^{m}, 1^a)}
 =p_3 p_1^r \cdot (p_2^k +{p_2}^{k-1} p_1^2 +\ldots +p_1^{2k}) =p_3 p_1^r \cdot \psi_{2^k}.$$
 
 But $p_3=h_3-s_{(2,1)}+e_3=2h_3+e_3-h_2h_1.$ 
 Hence we have  
 $$\psi_{(3, 2^{k},1^r)} -\psi_{(2^{m}, 1^a)}=(2h_3+e_3)p_1^r\psi_{2^k}-h_2p_1^{r+1}\psi_{2^k}.$$
 The proposition follows.  \end{proof}
 
 This leads us to make the following conjecture, which has been shown to be true in Lemma~\ref{lem3.3} for $m=1:$ 
 \begin{conjecture}\label{conj3} Let $k,m \geq 1.$ Then $\psi_{2^{k+m}}-h_2 p_1^{2m-2} \psi_{2^k}$ is Schur-positive, and hence so is 
 $\psi_{2^{k+m}}-h_2^r p_1^{2m-2r} \psi_{2^k}, \ 1\leq r\leq m.$ We have verified this for $1\leq k,m\leq 5.$
 \end{conjecture}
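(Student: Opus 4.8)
The plan is to follow the strategy of Theorem~\ref{thm2.11} and Lemma~\ref{lem3.3}: reduce the statement to a Schur-positivity question in the two-variable polynomial ring $\mathbb{Z}[h_2,e_2]$ via $p_1^2=h_2+e_2$ and $p_2=h_2-e_2$, and then absorb the negative contributions using Lemma~\ref{lem2.8}. First I would isolate the genuinely problematic part. Splitting the defining sum $\psi_{2^{k+m}}=\sum_{i=0}^{k+m}p_2^i p_1^{2(k+m)-2i}$ of Theorem~\ref{thm2.11} at $i=k$ gives the elementary identity $\psi_{2^{k+m}}=p_1^{2m}\psi_{2^k}+p_2^{k+1}\psi_{2^{m-1}}$, and since $p_1^2-h_2=e_2$ this yields
\[
\psi_{2^{k+m}}-h_2p_1^{2m-2}\psi_{2^k}=e_2\,p_1^{2m-2}\psi_{2^k}+p_2^{k+1}\psi_{2^{m-1}}.
\]
The first summand is a product of the Schur-positive functions $e_2$, $p_1^{2m-2}$ and $\psi_{2^k}$ (the last by Theorem~\ref{thm2.11}), hence Schur-positive; the entire difficulty is concentrated in the term $p_2^{k+1}\psi_{2^{m-1}}$, which is not individually Schur-positive.

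Next I would pass to an explicit polynomial form. Writing $u=p_1^2$ and $v=p_2$, one has $\psi_{2^k}=\sum_{i=0}^k u^{k-i}v^i$, and expanding the identity above collapses to the closed form
\[
\psi_{2^{k+m}}-h_2p_1^{2m-2}\psi_{2^k}=\tfrac12\,p_1^{2(k+m)}+\tfrac12\,p_1^{2m-2}p_2^{k+1}+\sum_{j=2}^{m}p_1^{2(m-j)}p_2^{k+j}.
\]
Substituting $p_1^2=h_2+e_2$ and $p_2=h_2-e_2$ turns the right-hand side into an explicit polynomial $\sum_i c_i\,h_2^{\,k+m-i}e_2^{\,i}$ (with half-integer coefficients that become integral after combination). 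Each monomial $h_2^{p}e_2^{q}$ is itself Schur-positive, so it suffices to show this polynomial is a nonnegative combination of the monomials $h_2^pe_2^q$ together with the blocks $h_2^{p}e_2^{q}\bigl(h_2^2-h_2e_2+e_2^2\bigr)$, the latter Schur-positive by Lemma~\ref{lem2.8}. For $m=1$ the combination already has all coefficients nonnegative (the negative part of $(h_2-e_2)^{k+1}$ cancels exactly against $e_2\psi_{2^k}$), recovering Lemma~\ref{lem3.3}; for $m=2$ the single negative monomial is absorbed by one application of Lemma~\ref{lem2.8}, e.g. $2h_2^3-2h_2^2e_2+4h_2e_2^2=2h_2(h_2^2-h_2e_2+e_2^2)+2h_2e_2^2$.

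The main obstacle is precisely this grouping for general $m$: one must show that every negative monomial $-c\,h_2^{p}e_2^{q}$ in the expansion can be matched with enough positive mass from its neighbors $h_2^{p+1}e_2^{q-1}$ and $h_2^{p-1}e_2^{q+1}$ to form nonnegative multiples of $h_2^{p-1}e_2^{q-1}(h_2^2-h_2e_2+e_2^2)$, exactly as in the judicious grouping used in the proof of Theorem~\ref{thm2.11}. This requires a uniform understanding of the coefficients $c_i$ arising from the signed sum $\sum_{j\ge 1}p_1^{2(m-j)}p_2^{k+j}$ of powers of $p_2$; controlling these signs simultaneously for all $k$ and $m$ is the combinatorially delicate step and the reason the statement remains conjectural. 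I note that, writing $F(k,m)$ for the difference in question, the recursions $F(k,m)=p_1^2F(k,m-1)+p_2^{k+m}$ and $F(k,m)=p_2F(k-1,m)+e_2\,p_1^{2(k+m-1)}$ reduce to the known base cases $F(0,m)$, which is expression~\eqref{eq2.1}, and $F(k,1)$, which is Lemma~\ref{lem3.3}; but each introduces a factor of $p_2$ in front of a Schur-positive term and so does not by itself propagate positivity, which is why the direct $(h_2,e_2)$-expansion seems unavoidable.

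Finally, the second ("hence") assertion follows from the first by induction on $r$: since $h_2=p_1^2-e_2$,
\[
\psi_{2^{k+m}}-h_2^{r}p_1^{2m-2r}\psi_{2^k}=\bigl[\psi_{2^{k+m}}-h_2^{r-1}p_1^{2(m-r+1)}\psi_{2^k}\bigr]+e_2\,h_2^{r-1}p_1^{2m-2r}\psi_{2^k},
\]
where for $2\le r\le m$ the bracketed term is the case $r-1$ (Schur-positive by induction, with base case $r=1$ furnished by the first statement) and the second term is manifestly Schur-positive.
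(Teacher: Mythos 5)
You should first be clear about the status of this statement in the paper: it is Conjecture~\ref{conj3}, presented as an \emph{open} conjecture (verified computationally for $1\leq k,m\leq 5$), and the paper contains no proof of it. The only cases settled there are the two boundary cases of your function $F(k,m)=\psi_{2^{k+m}}-h_2p_1^{2m-2}\psi_{2^k}$, namely $m=1$ (Lemma~\ref{lem3.3}) and $k=0$ (expression~\eqref{eq2.1} inside the proof of Theorem~\ref{thm2.11}). Measured against that, your write-up is a correct and useful reformulation, but it is not a proof, and you say as much yourself. Your identities all check out: the splitting $\psi_{2^{k+m}}=p_1^{2m}\psi_{2^k}+p_2^{k+1}\psi_{2^{m-1}}$ and hence $F(k,m)=e_2p_1^{2m-2}\psi_{2^k}+p_2^{k+1}\psi_{2^{m-1}}$; the closed form $F(k,m)=\tfrac12p_1^{2(k+m)}+\tfrac12p_1^{2m-2}p_2^{k+1}+\sum_{j=2}^{m}p_1^{2(m-j)}p_2^{k+j}$; both recursions $F(k,m)=p_1^2F(k,m-1)+p_2^{k+m}$ and $F(k,m)=p_2F(k-1,m)+e_2p_1^{2(k+m-1)}$; and the example $F(1,2)=2h_2^3-2h_2^2e_2+4h_2e_2^2=2h_2(h_2^2-h_2e_2+e_2^2)+2h_2e_2^2$. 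Your induction on $r$ for the second assertion is also sound; it is the same telescoping device the paper itself uses (in Lemma~\ref{lem3.3} and in the unlabelled corollary opening Section 3), so the ``hence'' clause genuinely does follow from the first clause.

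The genuine gap is the first clause itself. After reducing to the polynomial $\tfrac12(h_2+e_2)^{k+m}+\tfrac12(h_2+e_2)^{m-1}(h_2-e_2)^{k+1}+\sum_{j=2}^{m}(h_2+e_2)^{m-j}(h_2-e_2)^{k+j}$ in $h_2$ and $e_2$, one must exhibit a grouping in which every negative monomial is absorbed into nonnegative multiples of monomials $h_2^ae_2^b$ and of blocks $h_2^ae_2^b(h_2^2-h_2e_2+e_2^2)$ (Schur-positive by Lemma~\ref{lem2.8}); you produce no such grouping uniform in $k$ and $m$. This is not a routine omission: it is precisely the step at which the paper's own method breaks down. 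The ``judicious grouping'' in the proof of Theorem~\ref{thm2.11} succeeds because there the negative coefficients come from subtracting a single binomial row, $-p_1^{2m-2}$, whereas in $F(k,m)$ the signs come from superposing the alternating expansions of $(h_2-e_2)^{k+j}$ for all $j=1,\dots,m$, whose magnitudes grow binomially in different rates; handling the cases $m=1$ and $m=2$ and gesturing at ``the same grouping'' for general $m$ does not close this. Until that absorption scheme is produced, what you have established is that Conjecture~\ref{conj3} is equivalent to the Schur positivity of the explicit polynomial above, together with a correct proof that its first assertion implies its second — a reformulation of the open problem, not a resolution of it.
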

 
In view of Proposition~\ref{prop3.4},  the truth of this conjecture would immediately imply Schur positivity of $\psi_{(3, 2^k, 1^r)}$ for all $r,k\geq 1.$ 

\begin{remark}\label{rk3.5} In contrast to Conjecture~\ref{conj2}, computations show that $\psi_{2^{k+m}}-2h_2 p_1^{2m-2} \psi_{2^k}$ is NOT Schur-positive.
\end{remark}

\begin{lemma}\label{lem3.6} The function  $g_4=p_3p_1+h_2^2$ is Schur-positive.
\end{lemma}
\begin{proof} It is easily verified, using the expansion $p_3=h_3-s_{(2,1)}+e_3,=h_3+e_3-(h_2h_1-h_3),$  that 
$$p_3p_1+h_2^2=p_1(2h_3+e_3)-h_2 p_1^2 +h_2^2=
 p_1(2h_3+e_3)-h_2 e_2=2 s_{(4)}+s_{(3,1)}+s_{(1^4)}.$$
\end{proof}

We are able to settle the following special cases:

\begin{proposition}\label{prop3.7}  Let $r=0,1,2.$ Then $\psi_{(3,2^{k},1^r)}$ is Schur-positive.
\end{proposition}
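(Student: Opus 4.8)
The proof will proceed case by case on $r\in\{0,1,2\}$, using Proposition~\ref{prop3.4} to reduce each $\psi_{(3,2^k,1^r)}$ to an expression in $\psi_{2^j}$'s, $h_2$, $h_3$, $e_3$, and $p_1$, and then establishing Schur positivity of that expression directly. The key structural input is Proposition~\ref{prop3.4}, which writes $\psi_{(3,2^k,1^r)}$ as a bracketed difference of the form $\psi_{2^{m+k+1}}-h_2p_1^{?}\psi_{2^k}$ (up to a factor of $p_1$) plus a manifestly Schur-positive tail $p_1^{r}(2h_3+e_3)\psi_{2^k}$. Since $2h_3+e_3=2s_{(3)}+s_{(1^3)}$ is Schur-positive and multiplication by $p_1=s_{(1)}$ preserves Schur positivity (it is the induction/Pieri operator), the tail term is Schur-positive for every $k$ and $r$. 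So the entire burden falls on the bracketed difference.

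First I would handle the bracketed term by invoking the $m=1$ case of Conjecture~\ref{conj3}, which is exactly the content of Lemma~\ref{lem3.3}. Concretely: for $r=0$ we have $m=1$ so $m+k+1=k+2$, and the bracket reads $\psi_{2^{k+2}}-h_2p_1^{1}\psi_{2^k}$ --- but here I must be careful, since Lemma~\ref{lem3.3} gives Schur positivity of $\psi_{2^m}-h_2^k\psi_{2^{m-k}}$, an expression with a \emph{pure} power $h_2^k$ rather than the mixed $h_2p_1^{2m-2}$ appearing in Proposition~\ref{prop3.4}. The reconciliation is the identity $p_1^2=h_2+e_2$: expanding $p_1^{r+1}$ (or $p_1^r$) in powers of $h_2$ and $e_2$ lets me rewrite the bracket as a nonnegative combination of the differences $\psi_{2^{k+m+1}}-h_2^{j}\psi_{2^{k+m+1-j}}$ that Lemma~\ref{lem3.3} controls, plus leftover terms of the form $e_2^{\bullet}h_2^{\bullet}\psi_{2^k}$ which are automatically Schur-positive. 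I expect the small values $r=0,1,2$ to make this expansion short and explicit.

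The main obstacle --- and the reason this is a \emph{proposition} settling only $r\le 2$ rather than the full Conjecture~\ref{conj3} --- is precisely this conversion between the mixed $p_1$-power bracket and the pure-$h_2$-power brackets of Lemma~\ref{lem3.3}. When I substitute $p_1^2=h_2+e_2$ and expand, the cross terms come with binomial coefficients, and I need to verify that after regrouping, each negative contribution $-h_2^j\psi_{2^{\bullet}}$ is dominated by an available positive Schur-positive block from Lemma~\ref{lem3.3}. For $r=0,1,2$ the power of $p_1$ is small (at most $r+1\le 3$), so the expansion of $p_1^{r+1}$ has at most a few terms and the bookkeeping is manageable; for larger $r$ the number of mixed terms grows and one can no longer guarantee the domination without the full strength of Conjecture~\ref{conj3}. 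A secondary technical point is keeping track of the parity-dependent factor $\mathrm{Odd}(m+1)$ and the extra $p_1$ in the even-$r$ case of Proposition~\ref{prop3.4}, but these only shift indices and do not affect positivity. I would also lean on Lemma~\ref{lem3.6} ($p_3p_1+h_2^2$ Schur-positive) as a potential shortcut for recombining the $r=0$ and $r=1$ cases, since $g_4$ packages exactly the kind of $p_3p_1$-versus-$h_2^2$ tradeoff that arises after clearing the telescoped sum.
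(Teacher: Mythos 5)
Your overall toolkit --- Proposition~\ref{prop3.4} plus Lemmas~\ref{lem3.3} and~\ref{lem3.6} --- is the right one, and it is the paper's, but your division of labour breaks the proof. You declare the tail $p_1^r(2h_3+e_3)\psi_{2^k}$ Schur-positive (true) and conclude that ``the entire burden falls on the bracketed difference.'' For $r=1,2$ that bracket is $\psi_{2^{k+2}}-h_2p_1^2\psi_{2^k}$ (times an outer $p_1$ when $r=2$), and its Schur positivity \emph{on its own} is precisely the $m=2$ case of Conjecture~\ref{conj3}, which the paper leaves open; no lemma available to you proves it. Worse, the mechanism you propose for it has the sign backwards: writing $p_1^2=h_2+e_2$ gives
\[
\psi_{2^{k+2}}-h_2p_1^2\psi_{2^k}=\bigl[\psi_{2^{k+2}}-h_2^2\psi_{2^k}\bigr]-h_2e_2\psi_{2^k},
\]
so the ``leftover'' term $h_2e_2\psi_{2^k}$ enters with a \emph{minus} sign. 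It is not an ``automatically Schur-positive'' additive contribution; your expansion only exhibits the bracket as a Lemma~\ref{lem3.3} expression minus something positive, which proves nothing.

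The way out --- and it is the crux of the paper's argument, not an optional shortcut --- is to refuse to separate the bracket from the tail: the deficit $-h_2e_2\psi_{2^k}$ must be absorbed into the tail, using
\[
p_1(2h_3+e_3)-h_2e_2=p_3p_1+h_2^2=g_4=2s_{(4)}+s_{(3,1)}+s_{(1^4)},
\]
which is Schur-positive by Lemma~\ref{lem3.6}. This rewrites the \emph{sum} (not the bracket) as $[\psi_{2^{k+2}}-h_2^2\psi_{2^k}]+g_4\psi_{2^k}$ for $r=1$, and as $p_1[\psi_{2^{k+2}}-h_2^2\psi_{2^k}]+p_1g_4\psi_{2^k}$ for $r=2$, after which each summand really is covered by Lemmas~\ref{lem3.3} and~\ref{lem3.6}. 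You gesture at exactly this when you mention $g_4$ as ``a potential shortcut,'' so the repair is within reach, but as written your main plan does not establish the cases $r=1,2$. (Also a small index slip: for $r=0$ one has $m=0$, and the bracket is $\psi_{2^{k+1}}-h_2\psi_{2^k}$, which is literally Lemma~\ref{lem3.3}; no $p_1^2$-expansion is needed there, and a mixed term $h_2p_1^{1}$ with an odd power of $p_1$ never occurs in Proposition~\ref{prop3.4}.)
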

\begin{proof} We use Proposition~\ref{prop3.4}. 
First let $r=1.$ Then we have 
$$\psi_{(3,2^{k},1)}= [\psi_{2^{k+2}}-h_2 p_1^2\psi_{2^k}] + p_1(2h_3+e_3)\psi_{2^k}
=[\psi_{2^{k+2}}-h_2^2 \psi_{2^k}] +g_4\psi_{2^k},$$
where $g_4=p_1(2h_3+e_3)-h_2e_2.$ and is thus Schur-positive by Lemma~\ref{lem3.6}.
But the last expression in brackets is also Schur-positive by Lemma~\ref{lem3.3}.

If $r=0$,  Propostion~\ref{prop3.4} reduces to 
$$\psi_{(3,2^k)}=p_1[\psi_{2^{k+1}}-h_2 \psi_{2^k}] +\psi_{2^k} (2 h_3+e_3),$$
and again this is Schur-positive by Lemma~\ref{lem3.3}.

Finally if $r=2,$ Proposition~\ref{prop3.4} gives 
$$\psi_{(3,2^k, 1^2)}=p_1[\psi_{2^{k+2}}-h_2 p_1^2 \psi_{2^k}] +p_1^2 (2h_3+e_3)\psi_{2^k}
=p_1[\psi_{2^{k+2}}-h_2^2  \psi_{2^k}]
+p_1g_4\psi_{2^k};$$
invoking Lemmas~\ref{lem3.3} and~\ref{lem3.6}, this is Schur-positive as before. \end{proof}

This argument fails for $r=3.$ Proposition~\ref{prop3.4} then gives 
$$\psi_{(3,2^k, 1^3)}=[\psi_{2^{k+3}}-h_2 p_1^4 \psi_{2^k}] +p_1^3 (2h_3+e_3)\psi_{2^k}
=[\psi_{2^{k+3}}-h_2^3  \psi_{2^k}]
+g_6\psi_{2^k},$$
but the function $g_6=p_1^3(2h_3+e_3)-h_2e_2(h_2+p_1^2)$ is no longer Schur-positive.

In previous work of this author, a sign-twisted conjugacy action of $S_n$ was defined in terms of the exterior powers of the conjugacy action, and the following analogue of Theorem~\ref{thm1.1}  was established (recall that $f_n$ is the characteristic of the conjugacy action on the class of $n$-cycles):
\begin{theorem}\label{thm3.8}\cite[Theorem 4.2]{Su1} The twisted conjugacy action has Frobenius characteristic $\varepsilon_n$ satisfying 
\begin{enumerate}[label=(\arabic*)]
\item $\varepsilon_n=\sum_{\lambda\vdash n}\prod_i e_{m_i}[f_i],$ where $\lambda$ has $m_i$ parts equal to $i;$ 
\item $\varepsilon_n
=\sum_{\stackrel {\lambda\vdash n} {\text{all parts odd}}} p_\lambda;$  hence the latter sum is  Schur-positive.
\end{enumerate}
\end{theorem}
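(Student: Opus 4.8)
The plan is to handle the two parts in turn, extracting the Schur positivity as an immediate consequence of Part (1), and then proving the power-sum formula of Part (2) by a generating-function computation that collapses onto an elementary divisor-sum identity.

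For Part (1), I would return to the definition of the twisted conjugacy action via exterior powers. The conjugacy action of $S_n$ on a single class $C_\lambda$ of cycle type $\lambda$ is the permutation action on $S_n/Z_\lambda$, where the centralizer is a product of wreath products $Z_\lambda=\prod_i (C_i\wr S_{m_i})$ (here $C_i$ is cyclic of order $i$); inducing the trivial character yields the summand $\prod_i h_{m_i}[f_i]$ of $\psi_n$ appearing in Theorem~\ref{thm1.1}(1). The exterior-power twist replaces the trivial character of each factor $C_i\wr S_{m_i}$ by the sign character of its symmetric quotient $S_{m_i}$, the group permuting the $m_i$ identical $i$-blocks. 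The standard wreath-product plethysm dictionary says that inducing this sign-twisted character produces $e_{m_i}[f_i]$ in place of $h_{m_i}[f_i]$, which is precisely the statement that $e_m[f]$ is the characteristic of the ``alternating'' wreath construction on a module with characteristic $f$. Summing over $\lambda$ gives $\varepsilon_n=\sum_\lambda\prod_i e_{m_i}[f_i]$. Since each $f_i$ is the characteristic of a genuine module, and since plethysms, products, and sums of genuine modules remain genuine, every $e_{m_i}[f_i]$ and hence $\varepsilon_n$ is Schur-positive; this already settles the final assertion of the theorem.

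For Part (2), I would pass to generating functions. Because plethysm in the outer variable is a continuous ring homomorphism, $\sum_{n\ge0}\varepsilon_n=\prod_{i\ge1}\bigl(\sum_{m\ge0}e_m[f_i]\bigr)=\prod_{i\ge1}E[f_i]$ where $E=\sum_m e_m$. Using $E=\exp\bigl(\sum_{k\ge1}\tfrac{(-1)^{k-1}}{k}p_k\bigr)$ together with the cycle-index formula $f_i=\tfrac1i\sum_{d\mid i}\phi(d)\,p_d^{i/d}$ for the induced module $\mathrm{Ind}_{C_i}^{S_i}\mathbf 1$, I would take logarithms and collect the coefficient of each monomial $p_N^{M}$. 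A short computation shows this coefficient equals $\tfrac{1}{MN}\sum_{d\mid N}(-1)^{N/d-1}\phi(d)$, so the identity reduces to the elementary claim $\tfrac1N\sum_{d\mid N}(-1)^{N/d-1}\phi(d)=1$ for $N$ odd and $0$ for $N$ even. This follows from $\sum_{d\mid N}\phi(d)=N$ by splitting divisors according to the parity of $N/d$ (for even $N=2^a m$ with $m$ odd, both parity classes contribute $2^{a-1}m$ and cancel). Exponentiating back gives $\prod_i E[f_i]=\prod_{i\ \text{odd}}(1-p_i)^{-1}=\sum_{\lambda:\ \text{all parts odd}}p_\lambda$, which is Part (2).

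The main obstacle is Part (1): the generating-function manipulation in Part (2) is routine once the formula for $f_i$ and the plethystic exponential are in hand, but Part (1) requires carefully matching the exterior-power definition of the twisted action to the plethystic operator $e_m[\,\cdot\,]$, i.e. verifying that twisting by the sign of the block-permuting symmetric group is exactly what converts $h_{m_i}[f_i]$ into $e_{m_i}[f_i]$ on each conjugacy class. An alternative to the generating-function route would be to compute the twisted character $\varepsilon(\mu)$ directly and apply the characteristic map $\mathrm{ch}(\chi)=\sum_\mu z_\mu^{-1}\chi(\mu)\,p_\mu$ (with $z_\mu$ the order of the centralizer of a permutation of cycle type $\mu$); this would reprove Part (2) at once provided one checks that $\varepsilon(\mu)=z_\mu$ when every part of $\mu$ is odd and $\varepsilon(\mu)=0$ otherwise, mirroring the one-line proof that $\mathrm{ch}$ of the untwisted conjugacy character (value $z_\mu$ on class $\mu$) equals $\sum_\mu p_\mu$.
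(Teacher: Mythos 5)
You should first be aware that the paper contains no proof of Theorem~\ref{thm3.8}: it is quoted from \cite[Theorem 4.2]{Su1}, where the twisted conjugacy action is also defined, so there is no internal argument to compare yours against, and your proposal must be judged on its own. So judged, it splits cleanly into a part that is complete and a part that is not. Your reduction of (2) to (1) is correct: $\sum_{n\ge 0}\varepsilon_n=\prod_{i\ge 1}E[f_i]$ with $E=\sum_m e_m$; the substitution $p_k[f_i]=\tfrac1i\sum_{d\mid i}\phi(d)\,p_{dk}^{i/d}$ is right; the coefficient of $p_N^M$ in $\log\prod_i E[f_i]$ is indeed $\tfrac{1}{MN}\sum_{d\mid N}(-1)^{N/d-1}\phi(d)$ (the contributing triples $(i,k,d)$ are exactly those with $d\mid N$, $k=N/d$, $i=Md$); your parity evaluation of that divisor sum is correct ($\sum_{d\mid N}\phi(d)=N$ for odd $N$, and the two classes of size $2^{a-1}m$ cancel for $N=2^am$ even); and exponentiating gives $\prod_{N\ \text{odd}}(1-p_N)^{-1}=\sum_{\lambda:\ \text{all parts odd}}p_\lambda$. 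Likewise, granting (1), Schur positivity is immediate since each $e_{m_i}[f_i]$ is the characteristic of a genuine module.

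The gap is Part (1), and it is one you name yourself but do not close. Your argument there consists of (a) the assertion that the exterior-power twist amounts to replacing the trivial character of each centralizer factor $C_i\wr S_{m_i}$ by the character that is trivial on the base $C_i^{m_i}$ and equal to the sign on the block-permuting quotient $S_{m_i}$, followed by (b) the standard wreath-product fact that inducing $V^{\otimes m}\otimes\mathrm{sgn}$ from $S_i\wr S_m$ to $S_{im}$ has characteristic $e_m[\mathrm{ch}\,V]$. Step (b) is fine, but step (a) \emph{is} the content of Part (1): starting from the exterior-power definition of $\varepsilon_n$, one must identify the resulting linear character of $Z_\lambda=\prod_i C_i\wr S_{m_i}$ --- concretely, show it is the determinant of the permutation action of $Z_\lambda$ on the cycles of the centralized permutation, hence trivial on each $C_i^{m_i}$ and sign on each $S_{m_i}$. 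You flag exactly this verification as ``the main obstacle'' and then leave it undone, so what you have is a proof of (1)$\Rightarrow$(2), not of the theorem. Nor is the identification a formality one can wave at: the superficially similar choice of inducing $\mathrm{sgn}_{S_n}\!\downarrow_{Z_\lambda}$ yields $\omega(\psi_n)=\sum_{\lambda}(-1)^{n-\ell(\lambda)}p_\lambda$ instead, because the global sign restricts nontrivially to $C_i$ for even $i$ and trivially to the top $S_{m_i}$ for even $i$ --- the opposite pattern --- so choosing the wrong ``sign twist'' genuinely changes the answer. Closing step (a), either from the actual definition in \cite{Su1} or by computing the twisted character directly (your alternative route, which again hinges on the unproved evaluation $\varepsilon(\mu)=z_\mu$ for $\mu$ with all parts odd and $0$ otherwise), is what a complete proof requires.
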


Note that $\varepsilon_n$ is self-conjugate, so in particular the multiplicities of the trivial and sign representations coincide (and are equal to the number of partitions of $n$ with all parts odd).  
Based on character tables up to $n=10,$ we were led to make a conjecture in the spirit of Conjecture~\ref{conj1}, which we have subsequently verified for $n\leq 28$.

Let $\mu\vdash n$ be a partition with all parts odd.  Define 
$$\varepsilon_\mu=\sum_{\stackrel {(1^n)\leq \lambda\leq \mu} {\text{all parts odd}}} p_\lambda.$$  
\begin{conjecture}\label{conj4} Let $\mu\vdash n$ be a partition with all parts odd. The symmetric function $\varepsilon_\mu$ is Schur-positive.
\end{conjecture}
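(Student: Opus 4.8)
The plan is to mirror, in the twisted setting, the two complementary strategies that established Theorem~\ref{thm1.3}: building up from $p_1^n$ at the bottom of the chain, and peeling down from $\varepsilon_n$ at the top. A useful preliminary observation is that every $p_\lambda$ with all parts odd satisfies $\omega(p_\lambda)=p_\lambda$, because $\omega(p_k)=(-1)^{k-1}p_k$ and $n-\ell(\lambda)=\sum_i(\lambda_i-1)$ is even when all $\lambda_i$ are odd. Hence each $\varepsilon_\mu$ is $\omega$-invariant: its Schur expansion is symmetric under conjugation of partitions, the sign and trivial representations occur with equal multiplicity, and the casework is effectively halved.

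For the bottom-up approach, I would first identify the initial segment of odd partitions in reverse lexicographic order. Since the only odd part below $5$ other than $1$ is $3$, the partitions immediately following $(1^n)$ are exactly those with parts in $\{1,3\}$, forming the saturated chain $(1^n)<(3,1^{n-3})<(3^2,1^{n-6})<\cdots$. Along it one has
\[
\varepsilon_{(3^k,1^{n-3k})}=\sum_{i=0}^{k}p_3^{\,i}\,p_1^{\,n-3i}=p_1^{\,n-3k}\,h_k(p_3,p_1^3),
\]
where $h_k(p_3,p_1^3)$ denotes the complete homogeneous symmetric function of degree $k$ in the two ``letters'' $p_3$ and $p_1^3$. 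This is the exact analogue of the reduction to $p_{2k,T}$ used in Theorem~\ref{thm2.11}, and the plan is to prove a twisted analogue of Theorem~\ref{thm2.10} giving a closed binomial form for $h_k(p_3,p_1^3)$, together with an analogue of the key identity of Lemma~\ref{lem2.8}. One then continues up the chain past $(3^{\lfloor n/3\rfloor},\dots)$ into odd partitions containing a part $5$, and so on.

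For the top-down approach, I would start from $\varepsilon_n$, which is Schur-positive by Theorem~\ref{thm3.8}, and establish a twisted analogue of the multiplicity lower bound in Lemma~\ref{lem2.6}. This uses the decomposition $\varepsilon_n=\sum_\lambda\prod_i e_{m_i}[f_i]$ of Theorem~\ref{thm3.8}(1) together with the structure results on $f_n$ (Theorems~\ref{thm2.2}, \ref{thm2.3} and Lemma~\ref{lem2.5}) to show that every irreducible, except possibly the sign (hence, by $\omega$-invariance, except possibly the trivial, which however occurs with multiplicity equal to the number of odd partitions of $n$), appears in $\varepsilon_n$ with multiplicity at least some fixed bound. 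One then computes the partial sums $\varepsilon_n-\sum_{\nu>\mu,\ \nu\ \text{odd}}p_\nu$ by expanding the relevant products of odd power sums via the Murnaghan--Nakayama rule — the twisted counterparts of Lemmas~\ref{lem2.12}--\ref{lem2.14} and Proposition~\ref{prop2.15} — and checks that no irreducible is subtracted more often than the lower bound permits.

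The main obstacle lies in both halves, and is more severe than in the untwisted case. In the bottom-up step the two atoms expand as $p_1^3=s_{(3)}+2s_{(2,1)}+s_{(1^3)}$ and $p_3=s_{(3)}-s_{(2,1)}+s_{(1^3)}$: they agree on the two hook coordinates but carry $s_{(2,1)}$-coefficients of opposite sign, so the products $p_3^{\,i}p_1^{3j}$ no longer collapse into the clean two-term pattern $h_2\pm e_2$ that made Lemma~\ref{lem2.8} a ``miracle'', and locating the correct Schur-positive grouping that absorbs the negative $s_{(2,1)}$-contributions is the crux. In the top-down step the odd partitions do \emph{not} form a saturated chain in reverse lexicographic order (for $n=6$, for instance, $(3,1^3)$ is immediately followed by $(3^2)$ and then by $(5,1)$), so the neat telescoping behind Proposition~\ref{prop2.15} is disrupted and the partial sums involve products $p_m$ with $m\ge 5$, whose rim-hook expansions spread over wider coefficient ranges; keeping these expressions reduced and under the multiplicity bound is where I expect the argument to be hardest. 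The recurrence of Proposition~\ref{prop3.4} suggests a template for isolating tractable subfamilies, but it introduces even parts, so a fresh recurrence purely among odd partitions would be needed.
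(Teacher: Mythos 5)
You have not proved the statement, and neither does the paper: what you are attacking is Conjecture~\ref{conj4}, which the paper presents as an \emph{open} conjecture, supported only by machine verification for $n\leq 28$ and by special cases — Proposition~\ref{prop3.9} for $\mu=(3^r,1^{n-3r})$, Proposition~\ref{prop3.11} for $n$ odd and $\mu\in\{(n-2,1^2),(n-4,3,1),(n-4,1^3)\}$, and the unnumbered proposition that follows it for $n$ even and $\mu\in\{(n-3,3),(n-3,1^3)\}$. Your submission is likewise not a proof but a strategy outline, and both of its halves terminate at obstacles you yourself flag as unresolved: in the bottom-up half, finding the Schur-positive grouping that absorbs the negative $s_{(2,1)}$-contributions from products $p_3^{\,i}p_1^{3j}$; in the top-down half, controlling the non-saturated chain of odd partitions and the rim-hook expansions of $p_m$ for $m\geq 5$. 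These are not technical loose ends to be filled in later — they are the entire content of the conjecture, so the gap here is the proof itself.

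The fragments you do establish line up with what the paper actually proves, and in one place you overestimate the difficulty. The $\omega$-invariance observation is correct and appears in the paper (each $\varepsilon_\mu$ is necessarily self-conjugate, so trivial and sign multiplicities agree). Your identification of the initial segment of odd partitions as the chain $(1^n)<(3,1^{n-3})<(3^2,1^{n-6})<\cdots$ with parts in $\{1,3\}$ is also correct, but this case requires no new analogue of Lemma~\ref{lem2.8}: writing $\varepsilon_{(3^r,1^{n-3r})}=p_1^{n-3r}\varepsilon_{(3^r)}$, Schur positivity follows at once from \cite[Theorem 4.23]{Su1} (sums of $p_\lambda$ over partitions with all parts equal to $1$ or $k$ are Schur-positive), which is exactly how the paper proves Proposition~\ref{prop3.9}; the ``crux'' you locate there is already settled in the literature. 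Your top-down plan is also the paper's: Theorem~\ref{thm3.10} (every irreducible occurs in $\varepsilon_n$, with trivial/sign multiplicity equal to the number of partitions of $n$ into odd parts) combined with the multiplicity-free expansions of Lemma~\ref{lem2.13} yields the first few steps down the chain, which is all Propositions~\ref{prop3.11} and its even-$n$ companion accomplish. Beyond these fragments — past parts $\{1,3\}$ from below, or past the first two or three subtractions from above — no analogue of Lemmas~\ref{lem2.12}--\ref{lem2.14}, of the reduced partial sums of Proposition~\ref{prop2.15}, or of the multiplicity lower bound of Lemma~\ref{lem2.6} exists either in the paper or in your proposal, and that is precisely where the conjecture remains open.
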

Note that $\varepsilon_\mu$ is necessarily self-conjugate.

Theorem~\ref{thm3.8} says that $\varepsilon_{(n)}$ for $n$ odd and $\varepsilon_{(n-1,1)} $ for $n$ even are Schur-positive, since  we now have 
$$\varepsilon_n=\begin{cases} \varepsilon_{(n)}, &n \text{ odd,}\\
                                            \varepsilon_{(n-1,1)}, &n \text{ even.}\end{cases}$$

The chains in reverse lexicographic order are now as follows:
\begin{enumerate}[label=(\arabic*)]
\item If $n$ is odd:
$$(n)>(n-2,1^2)>(n-4,3,1)>(n-4,1^4)>(n-6,3^2)>(n-6,3,1^3)>(n-6, 1^6)>\ldots$$
\item If $n$ is even:
$$(n-1,1)>(n-3,3)>(n-3,1^3)>(n-5,3,1^2)>(n-5,1^5)>\ldots$$
\item
At the bottom of the chain we always have
$$(1^n)<(3, 1^{n-3})<(3^3, 1^{n-6})<\ldots <(3^{\lfloor\frac{n}{3}\rfloor}, 1^r)<(5,1^{n-5})<\ldots$$
where $r\equiv\! n \mod 3,\, 0\leq r\leq 2.$
\end{enumerate}

Some cases of Conjecture~\ref{conj4} are easy to establish, e.g. for $\mu=(3, 1^{n-3}),$ $p_1^n +p_3p_1^{n-3}$ is clearly Schur-positive.  More generally we have 
\begin{proposition}\label{prop3.9} If $\mu=(3^r, 1^{n-3r})$ then $\varepsilon_\mu=\sum_{\stackrel {(1^n)\leq \lambda\leq \mu} {\text{all parts odd}}} p_\lambda$ is Schur-positive.
\end{proposition}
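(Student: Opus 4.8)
The plan is to reduce the claim to a single Schur-positive factor and then to exploit a change of variables that converts Schur positivity into nonnegativity of ordinary polynomial coefficients. First I would identify the interval explicitly. Assume $r\ge 1$ (the case $r=0$ gives $\varepsilon_\mu=p_1^n$, the regular representation). Since $\mu=(3^r,1^{n-3r})$ has largest part $3$, every partition $\lambda$ with all parts odd and $\lambda\le\mu$ in reverse lexicographic order must satisfy $\lambda_1\le 3$, hence has all parts in $\{1,3\}$, i.e. $\lambda=(3^s,1^{n-3s})$; comparing in the order shows $\lambda\le\mu$ precisely when $s\le r$. Thus
\[
\varepsilon_\mu=\sum_{s=0}^{r}p_3^{\,s}\,p_1^{\,n-3s}=p_1^{\,n-3r}\,q_r,\qquad q_r:=\sum_{s=0}^{r}p_3^{\,s}\,p_1^{\,3(r-s)}.
\]
Because $p_1^{\,n-3r}=h_1^{\,n-3r}$ is Schur-positive and products of Schur-positive functions are Schur-positive, it suffices to prove that $q_r$ is Schur-positive. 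This reduction is exactly parallel to the one used in Theorem~\ref{thm2.11}(1).

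The key step is to write the two building blocks in terms of the $S_3$-irreducibles. Using $p_1^3=s_{(3)}+2s_{(2,1)}+s_{(1^3)}$ and $p_3=s_{(3)}-s_{(2,1)}+s_{(1^3)}$, I would set $w:=s_{(3)}+s_{(1^3)}$ and $v:=s_{(2,1)}$, so that $p_1^3=w+2v$ and $p_3=w-v$, with $p_1^3-p_3=3v$. Both $w$ and $v$ are Schur-positive, and any monomial $w^iv^j$ is a product of Schur-positive symmetric functions, hence Schur-positive. Consequently, if $q_r$ can be expressed as a polynomial in the commuting elements $w,v$ with nonnegative coefficients, then $q_r$ is a nonnegative combination of Schur-positive functions and we are done. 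To verify nonnegativity I would use the geometric-sum identity
\[
q_r=\frac{(w+2v)^{r+1}-(w-v)^{r+1}}{(w+2v)-(w-v)}=\frac{(w+2v)^{r+1}-(w-v)^{r+1}}{3v}.
\]
Expanding both powers by the binomial theorem, the coefficient of $w^{r+1-k}v^{k}$ in the numerator is $\binom{r+1}{k}\bigl(2^{k}-(-1)^{k}\bigr)$, which vanishes for $k=0$ and is a positive integer for $k\ge 1$; moreover $2\equiv-1\pmod 3$ forces $2^{k}-(-1)^{k}\equiv 0\pmod 3$, so dividing by $3v$ yields
\[
q_r=\sum_{k=1}^{r+1}\binom{r+1}{k}\,\frac{2^{k}-(-1)^{k}}{3}\;w^{\,r+1-k}v^{\,k-1},
\]
a sum with positive integer coefficients, establishing the Schur positivity of $q_r$ and hence of $\varepsilon_\mu$.

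I expect the main obstacle to be conceptual rather than computational: a naive bottom-up induction via $q_r=p_1^{3r}+p_3q_{r-1}$ fails, since $p_3$ is not Schur-positive and so adding power sums need not preserve positivity — the same difficulty that made Theorem~\ref{thm2.11} delicate. The resolution is the reparametrization by $w$ and $v$, which linearizes the problem once one observes that $p_1^3$ and $p_3$ both lie in the nonnegative cone spanned by the Schur-positive elements $w$ and $v$; after that, the telescoping geometric sum and the elementary congruence $2^k\equiv(-1)^k\pmod 3$ do all the work. The only other point to confirm carefully is the combinatorial claim that the all-odd partitions weakly below $(3^r,1^{n-3r})$ are exactly the $(3^s,1^{n-3s})$ with $0\le s\le r$, which follows directly from the definition of the order.
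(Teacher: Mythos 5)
Your proof is correct. The paper's own proof consists of exactly your first reduction---$\varepsilon_{(3^r,1^{n-3r})}=p_1^{n-3r}\,\varepsilon_{(3^r)}$, where $\varepsilon_{(3^r)}$ is the sum of $p_\lambda$ over partitions of $3r$ with all parts in $\{1,3\}$---followed by a citation of \cite[Theorem 4.23]{Su1}, which asserts Schur positivity of such sums for parts equal to $1$ or $k$, for any fixed $k\geq 2$. What you do differently is to replace that citation with a self-contained proof of the case $k=3$: writing $p_1^3=w+2v$ and $p_3=w-v$ with $w=s_{(3)}+s_{(1^3)}=h_3+e_3$ and $v=s_{(2,1)}$, telescoping the geometric sum, and using $2^{k}\equiv(-1)^{k}\pmod 3$, you arrive at the explicit nonnegative expansion
\[
\varepsilon_{(3^r)}=\sum_{k=1}^{r+1}\binom{r+1}{k}\,\frac{2^{k}-(-1)^{k}}{3}\,(h_3+e_3)^{r+1-k}\,s_{(2,1)}^{\,k-1}.
\]
This is precisely the parts-$\{1,3\}$ analogue of the formula quoted as Theorem~\ref{thm2.10} for parts $\{1,2\}$ (there $p_1^2=h_2+e_2$, $p_2=h_2-e_2$, and the same telescope yields $\sum_{j \text{ odd}}\binom{m+1}{j}h_2^{m+1-j}e_2^{j-1}$), so your argument both recovers the $k=2$ case and isolates what makes $k=3$ tractable: $p_1^3$ and $p_3$ lie in the cone spanned by the two Schur-positive elements $w$ and $v$. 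The citation buys the paper generality (all $k$ at once); your computation buys an explicit positive formula, which is stronger than bare positivity, at the cost of a method that does not obviously extend to $k\geq 4$, where $p_1^k$ and $p_k$ no longer fit in a two-dimensional positive cone. One small point worth making explicit for rigor: the division by $3v$ is legitimate because your identity is really a polynomial identity in two commuting indeterminates (multiply both sides by $3v=(w+2v)-(w-v)$ and telescope), which then specializes to the elements $w,v$; equivalently, $w$ and $v$ are algebraically independent, since $p_1^3=w+2v$ and $p_3=w-v$ are.
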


\begin{proof} Note that  $\varepsilon_{(3^r, 1^{n-3r})} =p_1^{n-3r} 
\varepsilon_\nu$ where $\nu=(3^r).$   But $\varepsilon_\nu$ is the sum of power sums for $\lambda$ in the set $T_{3r}$ consisting of all partitions with parts equal to 1 or 3.  By \cite[Theorem 4.23]{Su1}, this is Schur-positive.
\end{proof}

An analogue of Theorem~\ref{thm2.1} holds here as well.  It is also a consequence of Theorem~\ref{thm2.2}, since the global classes defined there are also conjugacy classes appearing in $\varepsilon_n.$
\begin{theorem}\label{thm3.10}\cite[Theorem 4.9, Proposition 4.22]{Su1} The representation $\varepsilon_n$ contains all irreducibles.  The multiplicity of the trivial representation (and hence also the sign) is the number of partitions of $n$ into odd parts.  In particular this multiplicity is at least $\lceil{\frac{n}{2}}\rceil\geq 3$ for $n\geq 5.$
\end{theorem}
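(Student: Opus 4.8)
The plan is to extract all three assertions from the two expansions of $\varepsilon_n$ in Theorem~\ref{thm3.8}. First I would record that $\varepsilon_n$ is self-conjugate: since $\omega(p_\mu)=(-1)^{n-\ell(\mu)}p_\mu$ and every partition $\mu$ with all parts odd satisfies $n\equiv\ell(\mu)\pmod 2$, each summand $p_\mu$ in Theorem~\ref{thm3.8}(2) is fixed by $\omega$, so $\omega(\varepsilon_n)=\varepsilon_n$. For the trivial multiplicity I would use $\langle p_\mu,h_n\rangle=1$ for every $\mu\vdash n$ (immediate from $h_n=\sum_\mu z_\mu^{-1}p_\mu$ together with $\langle p_\mu,p_\nu\rangle=z_\mu\delta_{\mu\nu}$); summing over the odd partitions gives $\langle\varepsilon_n,s_{(n)}\rangle$ equal to the number of partitions of $n$ into odd parts. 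Self-conjugacy then yields $\langle\varepsilon_n,s_{(1^n)}\rangle=\langle\omega\varepsilon_n,\omega s_{(1^n)}\rangle=\langle\varepsilon_n,s_{(n)}\rangle$, so the sign has the same multiplicity.

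For the lower bound I would exhibit $\lceil n/2\rceil$ explicit partitions of $n$ into odd parts, namely $(2j+1,1^{\,n-2j-1})$ for $0\le j\le\lfloor(n-1)/2\rfloor$. These are pairwise distinct (their largest parts differ) and have all parts odd, and there are $\lfloor(n-1)/2\rfloor+1=\lceil n/2\rceil$ of them. Hence the number of odd partitions is at least $\lceil n/2\rceil$, which is $\ge 3$ precisely when $n\ge 5$.

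For ``contains all irreducibles'' I would work instead with the plethystic expansion $\varepsilon_n=\sum_{\lambda\vdash n}\prod_i e_{m_i}[f_i]$ of Theorem~\ref{thm3.8}(1), in which every summand is the characteristic of a genuine representation and hence Schur-positive; it therefore suffices to locate each $s_\lambda$ inside a single summand. The summand for $\lambda=(n)$ is $f_n$, to which I apply Swanson's Theorem~\ref{thm2.3}. If $n$ is even, $f_n$ supplies every irreducible except $(n-1,1)$ and $(1^n)$; the sign $(1^n)$ is already present by the first paragraph, while $(n-1,1)$ sits inside the summand $f_{n-1}f_1$ (the term $\lambda=(n-1,1)$), since $f_{n-1}\supseteq s_{(n-1)}$ and $s_{(n-1)}s_{(1)}=s_{(n)}+s_{(n-1,1)}$. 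If $n$ is odd and $n\ge 5$, $f_n$ supplies every irreducible except $(n-1,1)$ and $(2,1^{n-2})$; here $(n-1,1)$ lies in the summand $\lambda=(n-2,1,1)$, which equals $e_1[f_{n-2}]\,e_2[f_1]=f_{n-2}\,e_2$, because $s_{(n-2)}e_2=s_{(n-1,1)}+s_{(n-2,1,1)}$, and its conjugate $(2,1^{n-2})$ is then present by self-conjugacy. The two remaining odd cases $n=1,3$ I would dispose of by the direct expansions $\varepsilon_1=s_{(1)}$ and $\varepsilon_3=2s_{(3)}+s_{(2,1)}+2s_{(1^3)}$.

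The main obstacle is precisely the pair $(n-1,1)$ and $(2,1^{n-2})$ that Swanson's theorem omits from $f_n$: the argument hinges on finding, in each parity, a second explicit summand of the expansion that contains $(n-1,1)$, and on the Schur-positivity of each summand, so that membership in one summand is inherited by $\varepsilon_n$. Self-conjugacy is what lets me recover the conjugate $(2,1^{n-2})$ for free in the odd case; without it one would have to produce a third summand. The small odd values $n<5$ genuinely fall outside this generic construction (there the index $(n-2,1,1)$ degenerates, as its part $n-2$ collides with the parts equal to $1$) and must be checked by hand.
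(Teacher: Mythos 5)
Your proof is correct, and it takes a genuinely different (and more self-contained) route than the paper, which for this theorem largely defers to the citation \cite{Su1}: in the text the paper proves only the final lower bound, using exactly the hook partitions you exhibit (your $(2j+1,1^{n-2j-1})$ are the paper's $(n-2r,1^{2r})$ reindexed; incidentally the paper's even-$n$ family $(n-1-2r,1^{2r})$ contains a typo, since those sum to $n-1$, while your family is stated correctly), and for the assertion that $\varepsilon_n$ contains all irreducibles it offers only the preceding remark that the global classes of Theorem~\ref{thm2.2} occur among the classes contributing to $\varepsilon_n$. Your argument for that assertion --- the plethystic expansion of Theorem~\ref{thm3.8}(1), Swanson's Theorem~\ref{thm2.3} applied to the summand $f_n$, the auxiliary summands $f_{n-1}f_1$ (for $n$ even) and $f_{n-2}e_2$ (for $n$ odd $\ge 5$) to capture $(n-1,1)$, and self-conjugacy to recover $(2,1^{n-2})$ --- is not only different but strictly more robust: for $n=5$ and $n=7$ there is no partition of $n$ into two or more distinct odd parts, so the paper's global-class remark is vacuous there and the citation is doing real work, whereas your argument covers every $n\ge 5$ uniformly, and you correctly dispose of the degenerate odd cases $n=1,3$ by hand. (The one case you did not flag, $n=2$, where the summand indexed by $(1,1)$ is $e_2[f_1]$ rather than $f_1f_1$, is harmless because $(n-1,1)$ is then the sign, which your first paragraph already supplies.) Finally, your derivation of the trivial multiplicity from $\langle p_\mu,h_n\rangle=1$ and of the sign multiplicity from $\omega$-invariance is the standard argument behind the cited result, so that portion of your write-up simply makes the paper's citation explicit; what each approach buys is clear: the paper's is shorter because it can lean on \cite{Su1}, while yours is a complete proof from the results actually stated in this paper.
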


The last statement in the theorem is simply a consequence of the observation that if $n$ is odd, the partitions $(n-2r, 1^{2r}), 0\leq r\leq \frac{n-1}{2}$ all have odd parts, while if $n$ is even, the partitions $(n-1-2r, 1^{2r}), 0\leq r\leq \frac{n-2}{2}$ all have odd parts.

\begin{proposition}\label{prop3.11}Let $n$ be odd. Then $\varepsilon_{(n-2,1^2)}, \varepsilon_{(n-4,3,1)}$ 
and $\varepsilon_{(n-4,1^3)}$ are all Schur-positive.  
\end{proposition}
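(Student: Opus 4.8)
The plan is to mimic the descending argument of Theorem~\ref{thm2.16}, working down from $\varepsilon_n$ along the chain of odd-part partitions for odd $n$ listed above. Reading off that chain, the odd partitions lying strictly above each target are: for $(n-2,1^2)$ only $(n)$; for $(n-4,3,1)$ the pair $(n),(n-2,1^2)$; and for $(n-4,1^4)$ the triple $(n),(n-2,1^2),(n-4,3,1)$. Since $\varepsilon_\mu=\varepsilon_n-\sum_{\nu>\mu,\ \nu\text{ odd}} p_\nu$, and the power sums of these partitions are $p_n$, $p_{n-2}p_1^2$ and $p_{n-4}p_3p_1$, this gives
\begin{align*}
\varepsilon_{(n-2,1^2)}&=\varepsilon_n-p_n,\\
\varepsilon_{(n-4,3,1)}&=\varepsilon_n-\left(p_n+p_{n-2}p_1^2\right),\\
\varepsilon_{(n-4,1^4)}&=\varepsilon_n-\left(p_n+p_{n-2}p_1^2+p_{n-4}p_3p_1\right).
\end{align*}

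First I would dispose of $\varepsilon_{(n-2,1^2)}$: by Lemma~\ref{lem2.13}(1) the Schur expansion of $p_n$ has all coefficients in $\{0,\pm1\}$, so every positive multiplicity it contributes equals $1$; since $\varepsilon_n$ contains all irreducibles by Theorem~\ref{thm3.10}, the difference $\varepsilon_n-p_n$ is Schur-positive. For the other two cases I would substitute the expansions of $p_{n-2}p_1^2$ and $p_{n-4}p_3p_1$ from Lemma~\ref{lem2.13}(5) and (10) with $n$ odd, add them to $p_n$, and reduce. The key claim to verify is that the combined expansion is \emph{reduced} and has every coefficient bounded by $1$ in absolute value, \emph{except} at the trivial representation $s_{(n)}$ and the sign $s_{(1^n)}$, where the coefficient equals the number of power sums being subtracted (that is, $2$ for $\varepsilon_{(n-4,3,1)}$ and $3$ for $\varepsilon_{(n-4,1^4)}$).

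Granting that claim, Schur positivity is immediate from Theorem~\ref{thm3.10}: every irreducible occurs in $\varepsilon_n$ with multiplicity at least $1$, which dominates every coefficient of size $1$; and the trivial and sign representations each occur in $\varepsilon_n$ with multiplicity equal to the number of partitions of $n$ into odd parts, which is at least $\lceil n/2\rceil\geq3$ for $n\geq5$, dominating the coefficients $2$ and $3$ that appear there. Coefficients that are negative or zero only help. Hence no irreducible is pushed below zero in any of the three differences.

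The main obstacle is the bookkeeping behind the reduction claim, exactly as in Proposition~\ref{prop2.15}. One must track the hook family from $p_n$ together with the double-hook families $(\ast,3,1^r)$ and $(\ast,2,2,1^t)$ of Lemma~\ref{lem2.13}(5) and the families $(\ast,5,1^t)$, $(\ast,3^2,1^t)$, $(\ast,2^4,1^{t-1})$ of Lemma~\ref{lem2.13}(10), and confirm that the explicit low-row terms of $p_{n-4}p_3p_1$, such as $(n-3,3)$, $(n-5,2^2,1)$, $(2^3,1^{n-6})$, $(4,3,1^{n-7})$, $(n-3,1^3)$ and $(4,1^{n-4})$, \emph{cancel} against the matching family terms coming from $p_n$ and $p_{n-2}p_1^2$ rather than reinforcing them, so that no non-trivial, non-sign irreducible ever reaches coefficient $2$. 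Since the summation families in Lemma~\ref{lem2.13}(10) are empty for small $n$ and $(n-4,3,1)$ is a genuine partition only once $n-4\geq3$, I would run the general argument for $n\geq11$ and dispose of the finitely many remaining odd values $n=7,9$ (and the degenerate $n=5$) by direct computation from the character table or from Theorem~\ref{thm3.8}(1).
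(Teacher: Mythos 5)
Your proposal is correct and follows essentially the same route as the paper's own proof: the same telescoping decomposition $\varepsilon_\mu=\varepsilon_n-\sum_{\nu>\mu,\ \nu\ \mathrm{odd}}p_\nu$, the same expansions from Lemma~\ref{lem2.13} (1), (5) and (10), and the same appeal to Theorem~\ref{thm3.10}, with the coefficients $2$ and $3$ at the trivial and sign representations dominated by the number of odd-part partitions together with self-conjugacy. The only caveat is that in the third case your ``key claim'' is slightly too strong — the coefficients of $(n-2,2)$ and $(2^2,1^{n-4})$ in $p_n+p_{n-2}p_1^2+p_{n-4}p_3p_1$ are in fact $-2$, so ``bounded by $1$ in absolute value'' should read ``no \emph{positive} coefficient exceeds $1$'' — but this is harmless since, as you note, negative coefficients only help, and the paper's own ``multiplicity-free'' assertion carries the same benign imprecision.
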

\begin{proof}  
We use Theorem~\ref{thm3.10}. Since  $\varepsilon_n$ contains all irreducibles, from Lemma~\ref{lem2.13} (1), 
$\varepsilon_{(n-2,1^2)}=\varepsilon_n-p_n$ must be Schur-positive.
Similarly, $\varepsilon_{(n-4,3,1)} =\varepsilon_n-p_n-p_{n-2} p_1^2.$
From Lemma~\ref{lem2.13} (1) and (9), $p_n+p_{n-2} p_1^2$ is multiplicity-free except for the occurrence of $2 (n)+2(-1)^n (1^n)=2 (n)-2(1^n).$ But the trivial representation occurs in $\varepsilon_n$ with multiplicity equal to the number of partitions of $n$ with all parts odd, and this is at least 2 for any odd $n.$ Since these representations are all self-conjugate, the proof is complete.

Finally, consider $\varepsilon_{(n-4,1^3)}=\varepsilon_n-(p_n+p_{n-2} p_1^2+p_{n-4}p_3p_1).$  Note that here we have $n\geq 7.$ Observe that from (1), (9) and (10) of Lemma~\ref{lem2.13}, $p_n+p_{n-2} p_1^2+p_{n-4}p_3p_1$ is multiplicity-free except for the occurrence of $3 (n)+3(-1)^n (1^n)=3 (n)-3(1^n).$  The result now follows as before from Theorem~\ref{thm3.10}.
\end{proof}.
\begin{proposition}Let $n$ be even. Then $\varepsilon_{(n-3,3)}$ and 
 $\varepsilon_{(n-3,1^3)}$ are Schur-positive.
\end{proposition}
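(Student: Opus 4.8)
The plan is to descend the reverse-lexicographic chain of odd-part partitions from the top, exactly as in the proof of Proposition~\ref{prop3.11}, subtracting from the function $\varepsilon_n$ (which by Theorem~\ref{thm3.10} contains \emph{every} irreducible) the power sums of the odd-part partitions lying strictly above the target. For $n$ even the chain begins $(n-1,1) > (n-3,3) > (n-3,1^3) > \cdots$, and $\varepsilon_{(n-1,1)} = \varepsilon_n$ by Theorem~\ref{thm3.8}. Since no odd-part partition lies strictly between consecutive entries of this chain — the intervening partitions $(n-2,2)$, $(n-3,2,1)$ each carry an even part — one gets, assuming $n\ge 6$ so that $(n-3,3)$ is a partition,
\begin{align*}
\varepsilon_{(n-3,3)} &= \varepsilon_n - p_{n-1}p_1,\\
\varepsilon_{(n-3,1^3)} &= \varepsilon_n - p_{n-1}p_1 - p_{n-3}p_3.
\end{align*}

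First I would dispatch $\varepsilon_{(n-3,3)}$. By Lemma~\ref{lem2.13}(2) the product $p_{n-1}p_1$ is multiplicity-free, every Schur function occurring with coefficient $0$ or $\pm 1$. As $\varepsilon_n$ contains each irreducible with multiplicity at least $1$, subtracting a multiplicity-free symmetric function cannot drive any coefficient below zero; hence $\varepsilon_{(n-3,3)}$ is Schur-positive.

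For $\varepsilon_{(n-3,1^3)}$ the key step is to show that, for $n$ even, the combined expansion $p_{n-1}p_1 + p_{n-3}p_3$ is multiplicity-free except for the trivial and sign representations, each occurring with coefficient exactly $2$. Using Lemma~\ref{lem2.13}(2) and~(6) with $(-1)^n=1$, the only Schur functions common to the two expansions are $(n)$ and $(1^n)$, which reinforce to give $2(n)+2(1^n)$, together with the single-$2$ double hooks $(n-3,2,1)$ and $(3,2,1^{n-5})$: these occur with coefficient $+1$ in $p_{n-1}p_1$ (the $r=1$ and $r=n-5$ terms, where $(-1)^{r-1}=+1$) and with coefficient $-1$ in $p_{n-3}p_3$, so they cancel. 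Every other Schur function then inherits a coefficient in $\{-1,0,1\}$. By Theorem~\ref{thm3.10} the trivial and sign representations appear in $\varepsilon_n$ with multiplicity equal to the number of partitions of $n$ into odd parts, which is at least $\lceil n/2\rceil \ge 3$ for even $n\ge 6$, while every other irreducible appears at least once. Thus each positive coefficient in $p_{n-1}p_1 + p_{n-3}p_3$ is dominated by the corresponding multiplicity in $\varepsilon_n$, and $\varepsilon_{(n-3,1^3)} = \varepsilon_n - (p_{n-1}p_1 + p_{n-3}p_3)$ is Schur-positive.

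The main obstacle is precisely the bookkeeping in this last step: one must confirm that the two reduced expansions share no further common Schur function, so that no coefficient other than those of $(n)$ and $(1^n)$ can exceed $1$. This reduces to checking that the single-$2$ double hooks $(n-2-r,2,1^r)$ produced by $p_{n-1}p_1$ meet $p_{n-3}p_3$ only in $(n-3,2,1)$ and $(3,2,1^{n-5})$, and that the border-strip summations of Lemma~\ref{lem2.13}(6) — whose shapes all contain a part exceeding $2$ or several parts equal to $2$ — never coincide with them; both facts are immediate from the reduced forms already recorded in Lemma~\ref{lem2.13}. Self-conjugacy of $\varepsilon_\mu$ provides a consistency check, since $p_{n-1}p_1$ and $p_{n-3}p_3$ are each $\omega$-invariant for $n$ even.
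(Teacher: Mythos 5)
Your proposal is correct and takes essentially the same route as the paper: the identical identities $\varepsilon_{(n-3,3)}=\varepsilon_n-p_1p_{n-1}$ and $\varepsilon_{(n-3,1^3)}=\varepsilon_n-p_1p_{n-1}-p_3p_{n-3}$, the same appeal to Lemma~\ref{lem2.13}(2) and (6) to conclude that the subtracted sum is multiplicity-free apart from $2(n)+2(1^n)$, and the same use of Theorem~\ref{thm3.10} to absorb the positive coefficients. The one caveat — shared with the paper's own argument — is that at $n=6$ the expansion of $p_3p_{n-3}$ is not reduced (e.g.\ $(n-3,3)$ coincides with $(3^2,1^{n-6})$ and $(n-4,2^2)$ with $(2^3,1^{n-6})$, each acquiring coefficient $2$), so that boundary case needs a separate check, which goes through because those irreducibles occur in $\varepsilon_6$ with multiplicity $6$.
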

\begin{proof} We have $\varepsilon_{(n-3,3)}=\varepsilon_n-p_1p_{n-1}$, so the result follows again from Theorem~\ref{thm3.10} and Lemma~\ref{lem2.13} (2).

Next we have $\varepsilon_{(n-3,1^3)}=\varepsilon_n-p_1p_{n-1}-p_3p_{n-3}.$ From Lemma~\ref{lem2.13} we know that $p_1p_{n-1}-p_3p_{n-3}$ is multiplicity-free except for the term $2((n)+(1^n)).$ But for $n$ even, $n\geq 6,$ there are at least three partitions with odd parts, namely $(n-1,1),$ $(3,1^{n-3})$ and $(n-3,3)$. This ensures the trivial representation (and hence the sign, since the representations are self-conjugate) occurs with multiplicity at least 3 in $\varepsilon_n,$ and hence with positive multiplicity in $\varepsilon_{(n-3,1^3)},$ completing the argument.
\end{proof}
Recall that $\omega$ denotes the involution on the ring of symmetric functions which sends $h_n$ to $e_n.$ Another result of \cite{Su1} states that
\begin{theorem}\label{thm3.13}\cite[Theorem 4.11]{Su1} The sum $\sum_{\stackrel {\lambda\vdash n} {n-\ell(\lambda) {\text{ even}}}} p_\lambda$ equals $\frac{1}{2}(\psi_n+\omega(\psi_n))$ and  is  Schur-positive.
\end{theorem}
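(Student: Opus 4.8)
The plan is to establish the identity first and then read off Schur positivity, relying only on how the involution $\omega$ acts on power sums. The one computation driving everything is that $\omega(p_k)=(-1)^{k-1}p_k$, so for any $\lambda\vdash n$ with $\ell(\lambda)$ parts,
\[
\omega(p_\lambda)=\prod_i(-1)^{\lambda_i-1}p_{\lambda_i}=(-1)^{\sum_i(\lambda_i-1)}p_\lambda=(-1)^{n-\ell(\lambda)}p_\lambda .
\]
I would record this as the opening step.

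Applying $\omega$ termwise to the decomposition $\psi_n=\sum_{\lambda\vdash n}p_\lambda$ of Theorem~\ref{thm1.1}(2) then gives $\omega(\psi_n)=\sum_{\lambda\vdash n}(-1)^{n-\ell(\lambda)}p_\lambda$. Averaging, the factor $\tfrac12\bigl(1+(-1)^{n-\ell(\lambda)}\bigr)$ equals $1$ precisely when $n-\ell(\lambda)$ is even and vanishes otherwise, so
\[
\tfrac12\bigl(\psi_n+\omega(\psi_n)\bigr)=\sum_{\substack{\lambda\vdash n\\ n-\ell(\lambda)\text{ even}}}p_\lambda ,
\]
which is the claimed identity.

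For the positivity, write $\psi_n=\sum_\mu c_\mu s_\mu$ with all $c_\mu\geq 0$ (Theorem~\ref{thm1.1}(2)). Since $\omega(s_\mu)=s_{\mu^t}$, the map $\omega$ merely permutes the Schur basis, so $\omega(\psi_n)=\sum_\mu c_\mu s_{\mu^t}$ is again Schur-positive and the coefficient of $s_\lambda$ in $\psi_n+\omega(\psi_n)$ is $c_\lambda+c_{\lambda^t}\geq 0$. Halving preserves nonnegativity, and the resulting coefficients are genuine nonnegative integers because the left-hand side is an integral combination of power sums, hence of Schur functions. This yields Schur positivity.

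There is no substantial obstacle in this argument; the only points demanding care are the sign identity $\omega(p_\lambda)=(-1)^{n-\ell(\lambda)}p_\lambda$ and the fact that $\omega$ permutes the Schur basis. The one subtlety worth flagging is that Schur positivity of $\tfrac12(\psi_n+\omega(\psi_n))$ requires the halved coefficients $\tfrac12(c_\lambda+c_{\lambda^t})$ to be integers, not merely nonnegative rationals; this is guaranteed by the integrality of the power-sum expansion of the left-hand side, so the conclusion is immediate once the identity is in hand.
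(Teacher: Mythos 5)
Your proof is correct, and every step checks out: $\omega(p_\lambda)=(-1)^{n-\ell(\lambda)}p_\lambda$ yields the identity by averaging, and positivity follows because $\omega$ permutes the Schur basis, so the coefficient of $s_\mu$ in $\tfrac12\bigl(\psi_n+\omega(\psi_n)\bigr)$ is $\tfrac12\bigl(c_\mu+c_{\mu^t}\bigr)\geq 0$; the integrality caveat you flag is also handled properly, since the Schur coefficients of any integral combination of power sums are sums of character values of $S_n$, hence integers. Be aware, however, that the paper contains no proof of this statement to compare against: Theorem~\ref{thm3.13} is imported verbatim from \cite[Theorem 4.11]{Su1}. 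The argument in that source is of a different nature: it is representation-theoretic, refining Solomon's identity so that the sum of $p_\lambda$ over the classes with $n-\ell(\lambda)$ even (i.e.\ the classes of even permutations) is realized as the Frobenius characteristic of an explicit $S_n$-module. This is why the present paper can list this case in Section 4 among those for which ``one can describe an $S_n$-module whose Frobenius characteristic is given by $\psi_T$,'' and it is exactly the kind of construction asked for in Question~\ref{qn3.14}. What your route buys is economy and self-containedness: it needs only Theorem~\ref{thm1.1}~(2) and standard properties of $\omega$, and it generalises immediately to any $\omega$-stable Schur-positive sum. What the cited approach buys is strictly more: an actual module whose character gives the nonnegative integers you produce, rather than nonnegativity alone.
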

Similarly we have, for any partition $\mu$ of $n,$ 
$\frac{1}{2}(\psi_\mu+\omega(\psi_\mu))=\sum_{\stackrel {(1^n)\leq \lambda\leq \mu} {n-\ell(\lambda) {\text{ even}}}} p_\lambda.$ This leads us to make the following conjecture, which has been  verified for $n\leq 20:$ 
\begin{conjecture}\label{conj5}  Let $\mu\vdash n.$ The sum $\sum_{\stackrel {(1^n)\leq \lambda\leq \mu} {n-\ell(\lambda) {\text{ even}}}} p_\lambda$ is Schur-positive.
\end{conjecture}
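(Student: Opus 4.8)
The plan is to reduce Conjecture~\ref{conj5} to an $\omega$-symmetrized form of Conjecture~\ref{conj1}. Since $\omega(p_\lambda)=(-1)^{n-\ell(\lambda)}p_\lambda$, the even-length restriction picks out exactly the $\omega$-fixed part of $\psi_\mu$:
\[ \sum_{\substack{(1^n)\le\lambda\le\mu\\ n-\ell(\lambda)\text{ even}}} p_\lambda = \tfrac12\bigl(\psi_\mu+\omega(\psi_\mu)\bigr). \]
Because $\omega$ sends $s_\nu$ to $s_{\nu^t}$ and is an isometry, the multiplicity of $s_\nu$ in this sum is $\tfrac12(\langle\psi_\mu,s_\nu\rangle+\langle\psi_\mu,s_{\nu^t}\rangle)$, which is a nonnegative-\emph{candidate} integer. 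Thus Conjecture~\ref{conj5} for a fixed $\mu$ is \emph{equivalent} to two assertions: (i) $\langle\psi_\mu,s_\nu\rangle\ge0$ for every self-conjugate $\nu$; and (ii) $\langle\psi_\mu,s_\nu\rangle+\langle\psi_\mu,s_{\nu^t}\rangle\ge0$ for every conjugate pair $\nu\ne\nu^t$.

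First I would record the immediate consequences. Both (i) and (ii) follow at once from Conjecture~\ref{conj1}, so Conjecture~\ref{conj5} holds unconditionally wherever $\psi_\mu$ is already known to be Schur-positive: for $\mu\le(3,1^{n-3})$ and $\mu\ge(n-4,1^4)$ by Theorem~\ref{thm1.3}, and for $\mu=(3,2^k,1^r)$ with $0\le r\le2$ by Proposition~\ref{prop3.7}. Assertion (ii) is moreover genuinely weaker than the corresponding case of Conjecture~\ref{conj1}: a negative coefficient $\langle\psi_\mu,s_\nu\rangle$ at a non-self-conjugate $\nu$ need not be excluded, only dominated by the multiplicity at $\nu^t$. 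The guiding example is the sign $s_{(1^n)}$, whose conjugate $s_{(n)}$ is the trivial representation and occurs with multiplicity equal to the length of $[(1^n),\mu]$; the pair inequality there is satisfied with enormous room. This is what makes the symmetrized statement plausibly more tractable than Conjecture~\ref{conj1} itself.

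To push beyond the known ranges I would run the two-directional argument of Section~2 in $\omega$-symmetrized form. From the top one starts with $\tfrac12(\psi_n+\omega(\psi_n))$, Schur-positive by Theorem~\ref{thm3.13}, and subtracts the symmetrized complement $\tfrac12\bigl(\sum_{\nu>\mu}p_\nu+\omega(\sum_{\nu>\mu}p_\nu)\bigr)$, whose $s_\lambda$-multiplicity is the average of the two conjugate coefficients read off the reduced partial sums of Proposition~\ref{prop2.15}. The bound of Lemma~\ref{lem2.6} symmetrizes directly: off the trivial--sign pair, each $s_\nu$ occurs in $\tfrac12(\psi_n+\omega(\psi_n))$ with multiplicity at least the average of the Lemma~\ref{lem2.6} bounds for $s_\nu$ and $s_{\nu^t}$. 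From the bottom one starts at $\tfrac12(\psi_{(1^n)}+\omega(\psi_{(1^n)}))=p_1^n$ and adds the even-length power sums, mirroring Theorem~\ref{thm2.11}. In the established ranges this reproduces exactly the bound-against-Lemma~\ref{lem2.6} argument of Theorem~\ref{thm2.16}, since the relevant multiplicities never exceed the symmetrized lower bound.

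The hard part will be the middle of the chain, and the self-conjugate shapes are the true obstacle. Assertion (ii) lets the conjugate absorb a negative coefficient, but for self-conjugate $\nu$ no cancellation is available: one must prove $\langle\psi_\mu,s_\nu\rangle\ge0$ outright, so Conjecture~\ref{conj5} contains the self-conjugate case of Conjecture~\ref{conj1} verbatim and is not strictly easier there. Compounding this, in the interior the complement multiplicities grow (values $-5$ and $-6$ already appear in the discussion after Theorem~\ref{thm2.16}) and there are no reduced closed forms of the kind available at the two extremes, so the clean comparison with Lemma~\ref{lem2.6} breaks down precisely where it is needed. A plausible route around the self-conjugate difficulty would be to find a manifestly Schur-positive model for $\tfrac12(\psi_\mu+\omega(\psi_\mu))$ — for instance a fixed-point or branching interpretation of the even conjugacy classes in $[(1^n),\mu]$ — but producing such a model, or otherwise controlling the self-conjugate multiplicities directly, is the step I expect to be genuinely difficult.
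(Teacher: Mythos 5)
You should first be clear about the status of the statement you are addressing: Conjecture~\ref{conj5} is an \emph{open conjecture} in the paper, not a theorem. The paper offers no proof of it; it records only the identity $\frac12\bigl(\psi_\mu+\omega(\psi_\mu)\bigr)=\sum_{\lambda}p_\lambda$ (sum over $\lambda\in[(1^n),\mu]$ with $n-\ell(\lambda)$ even), the one-line remark that Conjecture~\ref{conj1} implies Conjecture~\ref{conj5}, and computational verification for $n\leq 20$. So there is no proof in the paper against which your argument can be matched, and no argument of yours could be expected to close the question completely.

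Within that constraint, what you write is mathematically correct and coincides with the paper's own framing. Your identity $\sum p_\lambda=\frac12(\psi_\mu+\omega(\psi_\mu))$, the computation of the Schur coefficients as averages $\frac12\bigl(\langle\psi_\mu,s_\nu\rangle+\langle\psi_\mu,s_{\nu^t}\rangle\bigr)$, and the resulting split into assertion (i) for self-conjugate $\nu$ and assertion (ii) for conjugate pairs are all valid; your (i)/(ii) decomposition is in fact a slightly sharper formulation than the paper's bare remark that Conjecture~\ref{conj1} implies Conjecture~\ref{conj5}, because it isolates exactly where the symmetrized statement is weaker (conjugate pairs) and where it is not (self-conjugate shapes). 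Your derived partial results are also correct: Conjecture~\ref{conj5} holds unconditionally for $\mu\leq(3,1^{n-3})$, for $\mu\geq(n-4,1^4)$, and for $\mu=(3,2^k,1^r)$ with $0\leq r\leq 2$, by Theorem~\ref{thm1.3} and Proposition~\ref{prop3.7}, and the base case $\mu=(n)$ (or $(n-1,1)$) is Theorem~\ref{thm3.13}.

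The gap is the one you yourself name, and it is genuine: for self-conjugate $\nu$ the symmetrized multiplicity is just $\langle\psi_\mu,s_\nu\rangle$, so Conjecture~\ref{conj5} contains the self-conjugate case of Conjecture~\ref{conj1} verbatim, and your proposed symmetrized version of the Theorem~\ref{thm2.16} argument gains nothing new in the interior of the chain. Indeed, the paper notes that already for $\mu$ just below $(n-4,1^4)$ the complement multiplicities reach $-5$ and $-6$ (at shapes such as $(n-3,3)$ and $(n-4,4)$, which are not self-conjugate but whose conjugates also carry no compensating surplus provable by Lemma~\ref{lem2.6}), so the comparison against the Lemma~\ref{lem2.6} lower bound fails exactly where it is needed. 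In short: your reduction and partial results are sound and mirror the paper's discussion, but the conjecture itself remains open, in your write-up just as in the paper.
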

Clearly Conjecture~\ref{conj1} implies Conjecture~\ref{conj5}.    
Maple computations with the character table of $S_n$ show that the sum $\sum_{\lambda\in T} p_\lambda$ is NOT Schur-positive for arbitrary subsets $T$ containing $(1^n)$ and consisting of all partitions $\lambda$ with $n-\ell(\lambda)$ even.  The first counterexample occurs only for $n=14,$ and  there are then at least $2^{11}$ such subsets for which Schur positivity fails.  

Note that if we require that $n-\ell(\lambda)$ be odd, but also  include the regular representation in the sum, the preceding conjecture is false:

\qquad \qquad\qquad
 the sum 
$p_1^n+\sum_{\stackrel {(1^n)\leq \lambda\leq \mu} {n-\ell(\lambda) {\text{ odd}}}} p_\lambda$ is not Schur-positive.

\begin{question}\label{qn3.14}In  \cite{Su1} and \cite{Su3},  $S_n$-modules are constructed whose characteristics are multiplicity-free sums of power sums, thereby settling the Schur positivity question in these cases.
Is there a representation-theoretic context for  the sums $\psi_\mu$? 
\end{question}
 
 \section{Arbitrary subsets of conjugacy classes}
 
 In this section we examine the following more general question:
Let $f(n)$ be the number of subsets of
$\{p_\lambda : \lambda\vdash n\}$ containing $p_1^n,$ and having the property that the sum of their elements is NOT Schur-positive.  
What can be said about $f(n)?$   Richard Stanley computed the  values of $f(n)$ for $n\leq7$ after seeing a preprint of \cite{Su1}. 
Table 2 extends these values up to $n= 10.$

Recall from Section 1 that $\psi_T$ denotes the Schur function $\sum_{\mu\in T} p_\mu.$ The analysis of the multiplicity of the sign representation in Example~\ref{ex1.4} suggests a way to obtain a lower bound for the numbers $f(n).$ 
Indeed,  let $A(n)=\{\mu\vdash n: n-\ell(\mu) \text{ is even}\},$ and let $B(n)=\{\mu\vdash n: n-\ell(\mu) \text{ is odd}\}.$  Let $\alpha(n), \beta(n)$ respectively be the cardinalities of $A(n), B(n).$  Clearly $\alpha(n)+\beta(n)=p(n).$   As in \cite[Proposition 4.21]{Su1} (see also eqn.~\eqref{1.1}),
\begin{equation}\label{eq4.1}\alpha(n)-\beta(n)=\sum_{\mu\vdash n}(-1)^{n-\ell(\mu)}\end{equation} 
is the number of self-conjugate partitions of $n,$ and hence 
$\alpha(n)\geq \beta(n).$

By manipulating generating functions it can be seen that $\alpha(n)$ is also the number of partitions of $n$ with an even number of even parts, and an arbitrary number of odd parts.  The sequence appears in \cite[A046682]{O}.

\begin{proposition}\label{prop4.1}  Let $T$ be a subset of the set of partitions of $n$ \textit{not} containing the partition $(1^n)$.  The Schur function indexed by $(1^n)$ appears with negative multiplicity in the Schur expansion of $\psi_{T\cup\{(1^n)\}}$  if and only if $|T\cap B(n)|\geq 2+|T\cap A(n)|.$
Hence the number of such subsets gives  
 the following lower bound for $f(n)$:

$$\ell b(n)=\sum_{i=0}^{p(n)-\alpha(n)-2} \binom{p(n)-1}{i}.  $$
In particular $f(n)$ is positive for all $n\geq 4.$
\end{proposition}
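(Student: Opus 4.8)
The plan is to compute the coefficient of $s_{(1^n)}$ directly and then count. By the discussion following Definition~\ref{def1.2} together with~\eqref{1.1}, the value of the sign character $\chi^{(1^n)}$ on the class indexed by $\mu$ is $(-1)^{n-\ell(\mu)}$, so the multiplicity of $s_{(1^n)}$ in $p_\mu$ equals $(-1)^{n-\ell(\mu)}$. Since $(1^n)\in A(n)$ (as $n-\ell((1^n))=0$ is even) contributes $+1$, the multiplicity of $s_{(1^n)}$ in $\psi_{T\cup\{(1^n)\}}$ is
$$1+\sum_{\mu\in T}(-1)^{n-\ell(\mu)}=1+|T\cap A(n)|-|T\cap B(n)|.$$
This is negative exactly when $|T\cap B(n)|\geq 2+|T\cap A(n)|$, which is the asserted equivalence; each such $T$ makes $\psi_{T\cup\{(1^n)\}}$ fail Schur positivity and so contributes to $f(n)$.

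For the lower bound I would not attempt to count the bad sets exactly (that is an unwieldy double sum over $|T\cap A(n)|$ and $|T\cap B(n)|$), but instead inject a cleanly parametrized subfamily. Write $S=\{\lambda\vdash n\}\setminus\{(1^n)\}$, so $|S|=p(n)-1$ and $B(n)\subseteq S$. Given any $R\subseteq S$ with $|R|\leq\beta(n)-2$, set $T=B(n)\,\triangle\,R$. Because $A(n)\cap B(n)=\emptyset$, one checks $T\cap A(n)=R\cap A(n)$ and $T\cap B(n)=B(n)\setminus(R\cap B(n))$, whence
$$|T\cap B(n)|-|T\cap A(n)|=\beta(n)-|R|\geq 2.$$
Thus $T$ is bad, and $R\mapsto B(n)\,\triangle\,R$ is an involution on $2^S$, hence injective. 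Counting the domain gives $\sum_{i=0}^{\beta(n)-2}\binom{|S|}{i}=\sum_{i=0}^{p(n)-\alpha(n)-2}\binom{p(n)-1}{i}=\ell b(n)$, using $|S|=p(n)-1$ and $\beta(n)=p(n)-\alpha(n)$, so $f(n)\geq\ell b(n)$.

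For positivity when $n\geq 4$ it suffices to show $\beta(n)\geq 2$: then the upper limit $\beta(n)-2$ is nonnegative and the $i=0$ term alone yields $\ell b(n)\geq\binom{p(n)-1}{0}=1$. I would exhibit two distinct members of $B(n)$ valid for every $n\geq 4$, namely $(2,1^{n-2})$, with $n-\ell=1$, and $(4,1^{n-4})$, with $n-\ell=3$ (read as $(4)$ when $n=4$); both have $n-\ell$ odd and they are distinct, so $\beta(n)\geq 2$ and $f(n)\geq 1$.

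I expect the main obstacle to be the middle step: realizing that $\ell b(n)$ arises as the size of the subfamily indexed by the sets $R$ of size at most $\beta(n)-2$ rather than as the exact bad-set count, and choosing the involution $R\mapsto B(n)\,\triangle\,R$ so that the surplus $|T\cap B(n)|-|T\cap A(n)|$ is governed by the single quantity $|R|$. The characterization and the positivity step are then routine.
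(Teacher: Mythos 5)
Your proof is correct, and its first and last steps (the sign-multiplicity characterization, and positivity via $\beta(n)\geq 2$) coincide with the paper's, up to your choice of witnesses $(2,1^{n-2})$ and $(4,1^{n-4})$ versus the paper's $(2,1^{n-2})$ together with $(n)$ or $(n-1,1)$. The counting step, however, is genuinely different. The paper counts the bad subsets exactly: choosing $a$ elements of $A(n)\setminus\{(1^n)\}$ and $b\geq a+2$ elements of $B(n)$ gives $\sum_{a=0}^{\alpha(n)-1}\binom{\alpha(n)-1}{a}\sum_{b=a+2}^{\beta(n)}\binom{\beta(n)}{b}$, which it then collapses to $\ell b(n)$ by reading off coefficients in the Laurent expansion $(1+x^{-1})^{\alpha(n)-1}(1+x)^{\beta(n)}=x^{-(\alpha(n)-1)}(1+x)^{p(n)-1}$. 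You avoid both the double sum and the binomial identity by exhibiting the injection $R\mapsto B(n)\,\triangle\,R$ from subsets of size at most $\beta(n)-2$ of $S$ into bad subsets, which explains bijectively why the answer is a partial sum of the $\binom{p(n)-1}{i}$ and immediately yields $f(n)\geq \ell b(n)$; this is cleaner and needs no identity manipulation. What you give up is the exact enumeration implicit in the proposition's phrase ``the number of such subsets'': you only establish that the number of bad subsets is at least $\ell b(n)$. But your map is in fact a bijection onto the bad sets, since for bad $T$ the set $R=B(n)\,\triangle\,T$ satisfies $|R|=\beta(n)-\bigl(|T\cap B(n)|-|T\cap A(n)|\bigr)\leq \beta(n)-2$; adding this one line recovers the paper's exact count as well.
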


\begin{proof}  This is immediate from the preceding discussion and the fact that each $p_\mu$ contributes $(-1)^{n-\ell(\mu)}$  to the multiplicity of $(1^n)$ in
$\psi_T$.  A simple count then tells us that this multiplicity is negative for exactly as many subsets $T$ as given by the following sum:

$$\sum_{a=0}^{\alpha(n)-1}\binom{\alpha(n)-1}{a}
\sum_{b=a+2}^{\beta(n)} \binom{\beta(n)}{b}.$$

This is precisely the sum of the coefficients of the powers of $x^j, j\geq 2$,  in the Laurent series expansion of 
$$(1+x^{-1})^{\alpha(n)-1} (1+x)^{\beta(n)}
=x^{-(\alpha(n)-1)} (1+x)^{\alpha(n)+\beta(n)-1}.$$
Since $\alpha(n)+\beta(n)=p(n),$ this in turn is the sum of the coefficients of the terms $x^j$ in $(1+x)^{p(n)-1},$ for $j\geq \alpha(n)+1,$   i.e:
$$\sum_{j=\alpha(n)+1}^{p(n)-1} \binom{p(n)-1}{j}.$$

Now replace $j$ with $p(n)-1-i.$ The last claim follows because $\alpha(n)$ is the number of partitions with an even number of even parts and thus $\alpha(n)\leq p(n)-2.$ (If $n\geq 4,$ exclude the partitions  
$(2, 1^{n-2})$  and $(n)$ if $n$ is even, $(n-1,1)$ if $n$ is odd.)
\end{proof}




Table 2 includes data up to $n=10,$ and the resulting   lower bound $\ell b(n)$ on the number $f(n)$ of non-Schur-positive functions $\psi_T,$
 omitting the trivial values   $f(n)=0$ for $n\leq 3.$


\begin{center}{\small Table 2}

\begin{tabular}{|c|c|c|c|c|c|c|c|}\hline
$n$  &4 &5 & 6 & 7 &8 &9 &10\\ \hline
$p(n)$  & 5 & 7 &11 & 15 &22 &30 &42\\ \hline
${\bf f(n)}$   &{\bf 1} &{\bf  7} &{\bf  184} &{\bf  3674} &{\bf 488,259} &{\bf 145,796,658} &${\bf\scriptstyle 670, 141, 990, 673}$\\ \hline
$\ell b(n)$   &1 &7 & 176 & 3473 &401,930 & 123,012,781 &${\scriptstyle 585, 720, 020, 356}$\\ \hline 
$\frac{f(n)}{2^{p(n)-1}}$  &$0.06$
&$0.11$ &$0.18$ &$0.22$ &$0.23$  & $0.272 $ & $ 0.305   $\\ \hline
$\frac{\ell b(n)}{2^{p(n)-1}}$   &$0.06$
&$0.11$ &$ 0.172 $ &$  0.212 $ &$ 0.192 $  & $ 0.229 $ 
& $0.266$\\ \hline
\end{tabular}
\end{center}

\vskip.1in
\begin{proposition}\label{prop4.2} There exists a subset $T$ of the set of partitions of $n$, with $(1^n)\in T$, such that
\begin{itemize}
\item the irreducible  $(n-1,1)$ appears with negative multiplicity in $\psi_T$,    if and only if $n\geq 10.$   
\item the irreducible  $(2,1^{n-2})$ appears with negative multiplicity in $\psi_T,$   if and only if $n\geq 6.$
\end{itemize}
\end{proposition}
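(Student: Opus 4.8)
The plan is to reduce each bullet to a single arithmetic inequality about the \emph{most negative} achievable multiplicity, and then to pin down the threshold by combining a growth estimate for a partition count with a few explicit subsets. Recall from the Introduction that the multiplicity of $s_\lambda$ in $\psi_T$ is $\sum_{\mu\in T}\chi^\lambda(\mu)$. Since the standard representation has character $\chi^{(n-1,1)}(\mu)=m_1(\mu)-1$ and its sign twist, indexed by the conjugate shape $(2,1^{n-2})$, has character $\chi^{(2,1^{n-2})}(\mu)=(-1)^{n-\ell(\mu)}(m_1(\mu)-1)$, the two multiplicities in $\psi_T$ are $\sum_{\mu\in T}(m_1(\mu)-1)$ and $\sum_{\mu\in T}(-1)^{n-\ell(\mu)}(m_1(\mu)-1)$ respectively. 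Because each is a sum of contributions attached independently to the elements of $T$, and $T$ may be any subset containing $(1^n)$, the minimal value of each multiplicity is realised by taking $T$ to consist of $(1^n)$ together with exactly those $\mu\neq(1^n)$ whose individual contribution is negative. Thus each bullet reduces to showing that this minimum is negative precisely in the claimed range, and I would prove the two halves (the range where it is negative, and the range where it is $\geq 0$) separately.

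For $(n-1,1)$ the contribution of $\mu$ is $m_1(\mu)-1$, which is negative only when $m_1(\mu)=0$, in which case it equals $-1$. As $(1^n)$ contributes $+(n-1)$, the minimal multiplicity is exactly $(n-1)-q_0(n)$, where $q_0(n)=\#\{\mu\vdash n:m_1(\mu)=0\}$ is the number of partitions of $n$ with no part equal to $1$; removing a single $1$ gives $q_0(n)=p(n)-p(n-1)$. The inequality $(n-1)-q_0(n)<0$ is equivalent to $q_0(n)\geq n$. I would verify from the values of $p$ that $q_0(n)\leq n-1$ for $n\leq 9$ (so the minimum is $\geq 0$ there), while $q_0(10)=12$ and $q_0(11)=14$. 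For $n\geq 12$ I would use the decomposition $q_0(n)=q_0(n-2)+r(n)$, where $r(n)$ counts partitions of $n$ into parts $\geq 3$ (split off one part equal to $2$ when present); since $(n)$ and $(n-3,3)$ show $r(n)\geq 2$ for $n\geq 6$, induction in steps of $2$ from the two base cases gives $q_0(n)\geq n$ for all $n\geq 10$, proving the first bullet.

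For $(2,1^{n-2})$ the contribution of $\mu$ is negative exactly when either $m_1(\mu)\geq 2$ and $n-\ell(\mu)$ is odd (value $-(m_1(\mu)-1)$), or $m_1(\mu)=0$ and $n-\ell(\mu)$ is even (value $-1$). The ``only if'' direction for $n\leq 5$ is a finite check: writing the minimal multiplicity as $(n-1)-D(n)$ with $D(n)$ the total negative weight, one computes $D(4)=2$ and $D(5)=3$ (smaller $n$ being trivial), so the minimum is $\geq 0$. For $n\geq 6$ I would exhibit an explicit $T$. The key ingredient is $(2,1^{n-2})$ itself, whose contribution is $-(n-3)$, nearly cancelling the $+(n-1)$ of $(1^n)$ and leaving a surplus of $2$ to be overcome by three further $-1$'s. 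For even $n\geq 8$ I would adjoin the three two-part partitions $(n-2,2),(n-3,3),(n-4,4)$ (each with $m_1=0$, $\ell=2$, and $n-\ell$ even); for odd $n\geq 9$ I would adjoin $(n)$ together with three three-part partitions into parts $\geq 2$, which exist since there are at least three once $n-6\geq 3$. The two residual cases $n=6$ and $n=7$ are settled by a single explicit subset each, e.g. $\{(1^6),(2,1^4),(4,2),(3,3),(4,1,1)\}$ and $\{(1^7),(2,1^5),(7),(3,2,2),(3,2,1,1)\}$, whose multiplicities are $-1$.

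The reduction to a minimal multiplicity is the same device used for the sign representation in Proposition~\ref{prop4.1}; the genuine work is in the second bullet. The main obstacle is the sign alternation $(-1)^{n-\ell(\mu)}$: unlike the $(n-1,1)$ case, which collapse of negative classes depend on the parity of $n-\ell(\mu)$, so no single uniform family of partitions works for all $n$. This forces the parity split (two-part partitions for even $n$, three-part partitions together with $(n)$ for odd $n$) and the separate treatment of the small cases $n=6,7$, where too few partitions of the required shape are available. The remaining details are the routine verifications that each chosen family consists of distinct, valid partitions with the asserted values of $m_1$ and $\ell$, and that the minimum is nonnegative for $n\leq 5$.
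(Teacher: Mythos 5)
Your proposal is correct, and its core reduction is the same as the paper's: both rest on $\chi^{(n-1,1)}(\mu)=m_1(\mu)-1$ and $\chi^{(2,1^{n-2})}(\mu)=(-1)^{n-\ell(\mu)}\bigl(m_1(\mu)-1\bigr)$, and both minimise the row sum by adjoining to $(1^n)$ classes with negative character value. The differences lie in the execution, and they work in your favour. For the first bullet the paper takes the same extremal set $T$ (all classes with no part equal to $1$) and concludes from the fact that $p(n)-p(n-1)$ first exceeds $n-1$ at $n=10$ together with the assertion that ``the values $p(n)-p(n-1)$ increase''; since the threshold $n-1$ also grows, weak monotonicity alone is not enough, and your induction $q_0(n)=q_0(n-2)+r(n)$ with $r(n)\geq 2$ is a cleaner, fully rigorous way to get $q_0(n)\geq n$ for all $n\geq 10$. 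For the second bullet your argument actually repairs a gap in the paper's. The paper also starts from the pair $(1^n),(2,1^{n-2})$ with value-sum $2$, but then insists on choosing the three extra classes from $C_2=\{\mu:\ n-\ell(\mu)\ \text{odd},\ m_1(\mu)\geq 2\}$ only, arriving at the count $2^{p(n-1)-p(n-2)}\sum_{j\geq 3}\binom{p(n-2)-\alpha(n-2)-1}{j}$, claimed positive for $n\geq 6$; but $|C_2|=\beta(n-2)$ equals $2$ for $n=6$ and $3$ for $n=7$, so that binomial sum vanishes there and the paper's construction only covers $n\geq 8$. Your explicit subsets for $n=6,7$ draw the extra $-1$'s from the paper's $C_1$ (no fixed points, even signature) as well as from $C_2$, which is exactly what is needed (and your $n=6$ subset is the minimal one of the four subsets the paper reports computationally in Section 4). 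You also carry out the ``only if'' verification for $n\leq 5$ explicitly, which the paper leaves implicit. All of your character values and counts check against the paper's data, e.g.\ the $n=7$ row of Example~\ref{ex1.5}.
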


\begin{proof}  Write $\chi^{\mu}$ for the irreducible character indexed by the partition $\mu.$ Recall that  the value of $\chi^{(n-1,1)}(\lambda) $ is one less than the number $m_1(\lambda)$ of parts of $\lambda$ which are equal to 1, and is therefore never less than $-1. $ Hence we have 

$\chi^{(n-1,1)}(\lambda) =\begin{cases}
-1 &\text{ for the } (p(n)-p(n-1)) \text{ partitions } \lambda \text{ with } m_1(\lambda)=0,\\
0 &\text{ for the} (p(n-1)-p(n-2)) \text{ partitions } \lambda \text{ with } m_1(\lambda)=1,\\
\geq 1 &\text{ for the } p(n-2) \text{ partitions } \lambda \text{ with } m_1(\lambda)\geq 2.
\end{cases}$
\vskip.2in

Consider the conjugacy classes indexed by the $p(n)-p(n-1)$ partitions with no part equal to 1, and the partition $(1^n)$.
The row sum indexed by $(n-1,1)$ in the character table of $S_n$ will then be $n-1-(p(n)-p(n-1)).$  
The first claim follows  by observing that $p(n)-p(n-1)$ first exceeds $\chi^{(n-1,1)}(1^n)=n-1$ when $n=10,$ and the fact that the values $p(n)-p(n-1)$ increase.

Of course we could also append to the set $T$ above any of the $2^{p(n-1)-p(n-2)}$ subsets of conjugacy classes with exactly one fixed point (since these do not contribute to the multiplicity of 
$(n-1,1)$), to obtain even more non-Schur-positive instances of $\psi_T;$  for  the number of subsets with negative multiplicity 
for $(n-1,1)$ this gives a lower bound of
\begin{equation}2^{p(n-1)-p(n-2)} \sum_{j=0}^{p(n)-p(n-1)-n} \binom{p(n)-p(n-1)}{n+j}.\end{equation}

Next we note that $\chi^{(2,1^{n-2})}(\lambda)=(-1)^{n-\ell(\lambda)} \chi^{(n-1,1)}(\lambda),$ since the two irreducibles are conjugate.
Thus the number of times that $\chi^{(2,1^{n-2})}(\mu)$ equals $(-1)$ is 
\begin{equation*}
\begin{split}
&|\{\mu\vdash n: n-\ell(\mu) \text{ is even and }\mu \text{ has no singleton parts}\}|\\&+|\{\mu\vdash n: n-\ell(\mu) \text{ is odd and }\mu \text{ has exactly two singleton parts}\}|.
\end{split}
 \end{equation*}
  Similarly 
 the number of times that $\chi^{(2,1^{n-2})}(\mu)$ equals $(-r), r\geq 2,$ is 
 %
$$|\{\mu\vdash n: n-\ell(\mu) \text{ is odd and }\mu \text{ has at least three singleton parts}\}|. $$ 
 
 Combining these two quantities, we have that the number of conjugacy classes for which the value of $\chi^{(2,1^{n-2})}$ is negative is $|C_1|+|C_2|,$ where 
 $$C_1=\{\mu\vdash n: n-\ell(\mu) \text{ is even and }\mu \text{ has no singleton parts}\}$$
 and $$ C_2=\{\mu\vdash n: n-\ell(\mu) \text{ is odd, }\mu \text{ has at least \textit{two} singleton parts}\}.$$
 
 The set  $C_2$ is in bijection with the set of all odd-signature partitions of $n-2$, so has cardinality $p(n-2)-\alpha(n-2).$ Also, the character values on the classes $(1^n)$ and $(2,1^{n-2})$ together add up to $(n-1)-(n-3)=2.$ Hence, by choosing at least 3 additional conjugacy classes in $C_2$, excluding the partition $(2,1^{n-2})$,  we obtain that the multiplicity of $\chi^{(2,1^{n-2})}$ is negative for at least 
 \begin{equation}2^{p(n-1)-p(n-2)} \sum_{j\geq 3} 
 \binom{p(n-2)-\alpha(n-2)-1}{j}\end{equation}
 subsets, and this is positive as soon as $n\geq 6,$   
 since then $p(n-2)-\alpha(n-2)\geq p(n-2)-p(n-1)\geq 2.$
 Likewise the cardinality of $C_1$ is $p(n-2)-\alpha(n-2).$ 
\end{proof}

Note that the lower bound of Proposition~\ref{prop4.1}  surpasses the two lower bounds obtained above.  In order to test the Schur positivity of $\psi_T$, we need to examine the multiplicity of the irreducible indexed by each $\lambda\vdash n$ in $\psi_T.$ This is given by 
$a_T(\lambda)=f^\lambda + \sum_{\mu \in T:\mu\neq (1^n)} \chi^\lambda(\mu),$ where $f^\lambda=\chi^\lambda((1^n))$ is the number of standard Young tableaux of shape $\lambda.$   Now $
\chi^\lambda((1^n))$ is larger than any other value of the character 
$\chi^\lambda.$
Hence one way in which we can see how to make these values negative is to find $\mu$ such that  $f^\lambda+\chi^\lambda(\mu)$ is small relative to the number $p(n)$ of conjugacy classes.  For instance:

\begin{proposition}\label{prop4.3} Let $\lambda   \vdash n, $  and let $\tau=(2,1^{n-2})$ be the (conjugacy class of) a single transposition.  Then 
$\chi^\lambda ((1^n)) +\chi^\lambda(\tau)$ equals
\begin{enumerate}[label=(\arabic*)]
\item 2 if $\lambda=(2,1^{n-2}),$ 
\item $2(n-3)$ if $\lambda=(2^2,1^{n-4}),$ 
\item $(n-2)(n-5)$ if $\lambda=(2^3,1^{n-6}),$ 
\item $2(n-2)$ if $\lambda=(3,1^{n-3}).$
\end{enumerate}
\end{proposition}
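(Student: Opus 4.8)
The plan is to reduce all four evaluations to two classical ingredients: the hook-length formula for $f^\lambda=\chi^\lambda((1^n))$, and the well-known formula for the value of an irreducible character on a transposition in terms of cell contents. Recall that the class sum $\sum_{1\le i<j\le n}(i\,j)$ is central in $\mathbb{C}S_n$, so by Schur's lemma it acts on the irreducible module indexed by $\lambda$ as a scalar, and that scalar is the sum of the contents $c(i,j)=j-i$ over the cells $(i,j)$ of $\lambda$ (the total Jucys--Murphy eigenvalue). Taking traces gives
\[
\chi^\lambda(\tau)=\frac{f^\lambda}{\binom{n}{2}}\,C(\lambda),\qquad C(\lambda):=\sum_{(i,j)\in\lambda}(j-i).
\]
So for each shape I would (i) read off $f^\lambda$ from the hook lengths, (ii) compute the content sum $C(\lambda)$, and (iii) substitute and add $f^\lambda$. (One could equally invoke the Murnaghan--Nakayama rule, removing size-$2$ border strips, which is the tool already used in Lemma~\ref{lem2.12}; but the content formula is uniform across all four shapes.)

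For the dimensions, every shape here is a hook or a two-column ``fat hook,'' so the hook lengths are immediate: the long first column contributes a factorial-type product and the short second column contributes only the small factors $\{3,2,1\}$ or $\{2,1\}$. Cancelling against $n!$ yields
\[
f^{(2,1^{n-2})}=n-1,\quad f^{(2^2,1^{n-4})}=\tfrac{n(n-3)}{2},\quad f^{(2^3,1^{n-6})}=\tfrac{n(n-1)(n-5)}{6},\quad f^{(3,1^{n-3})}=\binom{n-1}{2}.
\]
The content sums are equally mechanical, since along a column the contents decrease by $1$ at each step, so each $C(\lambda)$ is a short arithmetic series: for instance $C(2,1^{n-2})=1-\binom{n-1}{2}$, $C(2^2,1^{n-4})=1-\tfrac{(n-2)(n-3)}{2}$, $C(2^3,1^{n-6})=-\tfrac{(n-3)(n-4)}{2}$, and $C(3,1^{n-3})=3-\binom{n-2}{2}$.

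Substituting into the displayed formula, the prefactor $f^\lambda/\binom{n}{2}$ cancels so cleanly against the quadratic in $C(\lambda)$ that $\chi^\lambda((1^n))+\chi^\lambda(\tau)$ collapses to a low-degree polynomial in each case; e.g. for $(2^3,1^{n-6})$ one gets $\tfrac{n-5}{6}\bigl[n(n-1)-(n-3)(n-4)\bigr]=(n-2)(n-5)$, and the other three give $2$, $2(n-3)$, and $2(n-2)$ respectively. As an independent sanity check I would note that $(2,1^{n-2})$ is the sign-twist of $(n-1,1)$ and $(3,1^{n-3})$ is the conjugate of $(n-2,1^2)$, together with the elementary fact $\chi^{(n-1,1)}(\tau)=n-3$ (the natural permutation character has $n-2$ fixed points on a transposition, minus the trivial constituent); applying $\omega$ and the relation $\chi^{\lambda'}=\chi^\lambda\otimes\mathrm{sgn}$ recovers cases (1) and (4) with no content computation at all, and I would also verify all four formulas at a small value such as $n=6$.

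There is no genuine conceptual obstacle here; the only thing to get right is the bookkeeping. The main risk is an off-by-one error, either in the upper and lower limits of the arithmetic series defining each $C(\lambda)$ (these limits shift because of the presence of the second column) or in the leg lengths assigned to the first-column cells when computing $f^\lambda$. I would guard against this by checking each $C(\lambda)$ against the closed forms stated above and by confirming the final four identities at $n=6$ and $n=7$.
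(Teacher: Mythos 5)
Your proposal is correct and follows essentially the same route as the paper: the formula $\chi^\lambda(\tau)=\frac{f^\lambda}{\binom{n}{2}}\sum_{(i,j)\in\lambda}(j-i)$ you use is identical to the paper's $\chi^\lambda(\tau)=\frac{f^\lambda}{\binom{n}{2}}\bigl(b(\lambda')-b(\lambda)\bigr)$ cited from \cite[Ex.~7.51]{St4EC2}, since $b(\lambda')-b(\lambda)$ is exactly the content sum, and both arguments then finish with the hook-length formula. Your computations (dimensions, content sums, and the final simplifications) are all correct, and your sign-twist observation for $(2,1^{n-2})$ mirrors how the paper disposes of case (1) by referring back to Proposition~\ref{prop4.2}.
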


\begin{proof}  The first part has already been observed in the proof of Proposition~\ref{prop4.2}. For the rest, we use the formula 
$\chi^\lambda(\tau)= \frac{f^\lambda}{{n\choose 2}} (b(\lambda \rq)-b(\lambda)),$ where $b(\lambda)=\sum_i (i-1)\lambda_i,$  as well as the hook length formula for $f^\lambda.$ (See 
\cite[Ex. 7.51]{St4EC2}). When $\lambda \rq$ dominates $\lambda$ we must have $b(\lambda \rq)<b(\lambda) $ and thus $\chi^\lambda(\tau)$ is negative.
\end{proof}
 An examination of the character tables of $S_n$ leads to  the following observations.     The  use of  character tables eliminates the need for Stembridge's SF package for Maple, by means of which  the values $f(n)$ were originally calculated, up to $n=8.$
\begin{itemize}
\item For $n=6,$ of the 184 subsets that fail to be Schur-positive, exactly 176 fail to be Schur-positive because of the irreducible $(1^6)$,  another 4 fail because the irreducible $(2,1^4)$ appears with negative coefficient, and the remaining 4 fail because of the irreducible $(3^2).$    From the character table of $S_6,$ it is easy to identify these 8 subsets.  (In each of these cases no other irreducibles occur with negative coefficient.)

\item For $n=7,$ the number of subsets failing Schur positivity because of (a negative coefficient for) $(1^7)$ is $3473,$ and  384 were identified as failing (in part) because of the irreducible $(2, 1^5).$  The count for subsets in which both irreducibles appear with negative coefficient is 183, and this confirms $f(7)=3674.$ 
From the character table of $S_7,$ it is easy to verify that the number of subsets $T$ resulting in a negative coefficient for $(2,1^5)$ in $\psi_T$ is exactly 384, and  also that no other irreducibles occur with negative coefficient in any subset.

\item For $n=8,$ by examining the negative entries in the character table, we see  that for any subset $T$ containing $(1^n)$, the only two possibilities for negative coefficients in the Schur expansion of $\psi_T$ are $(1^8)$ and $(2,1^6).$  There are  $\ell b(8)=401,930$ subsets with negative multiplicity for $(1^8)$,  $153,008$ subsets with negative multiplicity for $(2,1^6),$ 
and  $76,679$ subsets in which \textit{both} irreducibles occur with negative multiplicity.  The reader can check that this agrees with the figure for $f(8)$ in the table.  This computation took 70 seconds in Maple.

\item For $n=9,$ since our lower bound is $\ell b(9)=123,012,781,$ 
we know that $\frac{f(9)}{2^{p(9)-1}}\geq \frac{123,012,781}{2^{29}}=0.22913.$  The character table shows that in addition to $(1^9)$ and $(2, 1^7),$ only the irreducibles $(2^2, 1^5)$ and $(3, 1^6)$ will appear with negative coefficient in some subsets.  The value of $f(9)$ was calculated by exploiting this fact, and took 6.8 hours in Maple. However, the C code ran in only 36 seconds.

\item For $n=10$ similarly, we have 
$\frac{f(10)}{2^{p(10)-1}}\geq \frac{585,720,020,356}{2^{41}}=0.266.$ 
 The character tables now show that one or more of only the following five irreducibles  will appear with negative coefficient in $\psi_T:  (1^{10}), (2,1^8), (2^2, 1^6), (3, 1^7), (9,1),$ 
 for some subset $T$ containing $(1^{10}).$  Again, the computation of $f(10)$ exploited this fact.  It was coded in C, and took 83 hours to produce the result.
 
 \item For $n=11$ the six irreducibles contributing to negativity in $\psi_T$ are 
 
 $(1^{11}), (2,1^9), (2^2,1^7), (2^3,1^5), (3,1^8), (10,1).$ 
 %
\end{itemize}

 Of course this number of irreducibles increases rapidly with $n;$  e.g. for $n=28,$ out of $p(28)=3,718$ partitions,  $89$ can occur with negative multiplicity.  A far more accurate lower bound than $\ell b(n)$ is obtained by taking the number of subsets $T$ in which \textit{either} of the representations $(1^n)$ or $(2,1^{n-2})$ appear with negative multiplicity in $ \psi_T,$ but a formula for this in the spirit of Proposition~\ref{prop4.1} seems difficult to obtain.
 
 Tables 3a and 3b below contain, for each $n,$ the values of the function  $g(n),$ defined to be the number of  partitions $\mu$ of $ n$ such that, for some subset $T$ containing $(1^n),$ the irreducible indexed by $\mu$ appears with negative multiplicity in $\psi_T.$
 
 \begin{center}{\small Table 3a}\end{center}
\begin{center}
\begin{tabular}{|c|c|c|c|c|c|c|c|c|c|c|c|c|c|c|c|c|c|c|c|}\hline
$n$  &4 &5 & 6 & 7 &8 &9 &10 &11 & 12  \\ \hline
$p(n)$  & 5 & 7 &11 & 15 &22 &30 &42 &56 &77  \\ \hline
${\bf g(n)}$   &{\bf 1} &{\bf  1} &{\bf  3} &{\bf  2} &{\bf 2} &{\bf 4} &${\bf 5}$ & ${\bf 6}$ &${\bf 8}$ \\ \hline
\end{tabular}
\end{center}
\vskip .1in
 \begin{center}{\small Table 3b}\end{center}
\begin{center}
\begin{tabular}{|c|c|c|c|c|c|c|c|c|c|c|c|c|c|c|c|c|c|c|c|}\hline
$n$ &13 &14 & 15 &16 &17 & 18 & 19 & 20 &21 & 22 &23 &24 & 25\\ \hline
$p(n)$ & 101 &135 &176 & 231 & 297 &385 &490 &627& 792 & 1002 & 1255 & 1575 &1958\\ \hline
${\bf g(n)}$   & ${\bf 9}$ & ${\bf 10}$ & ${\bf 10}$  &${\bf 15}$ &${\bf 16}$ &${\bf 22}$  &${\bf 23}$  &${\bf 27}$ &${\bf 33}$&  ${\bf 36}$  &${\bf 43}$ & ${\bf 51}$ &${\bf 56}$\\  \hline
\end{tabular}
\end{center}
 \vskip .1in
Based on our computations, we make the following conjecture:
\begin{conjecture}\label{conj6} For $n\geq 6,$ the numbers $\frac{f(n)}{2^{p(n)-1}}$ are bounded below by $\frac{1}{16},$ above by $\frac{1}{2},$ and are strictly increasing.
\end{conjecture}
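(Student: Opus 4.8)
The plan is to read $f(n)/2^{p(n)-1}$ probabilistically. Fixing $p_1^n\in T$ and including each $p_\mu$ with $\mu\neq(1^n)$ independently with probability $\tfrac12$ makes $f(n)/2^{p(n)-1}$ exactly the probability $P(T\text{ is not Schur-positive})$. For each $\lambda\vdash n$ the multiplicity $a_T(\lambda)=f^\lambda+\sum_{\mu\in T\setminus\{(1^n)\}}\chi^\lambda(\mu)$ is a sum of independent bounded contributions with mean $\mu_\lambda=\tfrac12\bigl(f^\lambda+\langle\psi_n,s_\lambda\rangle\bigr)$ and variance $V_\lambda=\tfrac14\sum_{\mu\neq(1^n)}\chi^\lambda(\mu)^2$. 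The entire analysis turns on the fact that the sign $\lambda=(1^n)$ is the \emph{unique} irreducible whose mean is small relative to its standard deviation: here $f^{(1^n)}=1$ and $\langle\psi_n,s_{(1^n)}\rangle=\mathrm{sc}(n)$, the number of self-conjugate partitions (Proposition~\ref{prop2.4}(2)), while $\chi^{(1^n)}(\mu)=\pm1$ forces $V_{(1^n)}=\tfrac14(p(n)-1)$.

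\textbf{Lower bound.} Since $f(n)\geq \ell b(n)$ by Proposition~\ref{prop4.1}, it suffices to bound $\ell b(n)/2^{p(n)-1}$ below by $\tfrac1{16}$ for $n\geq 6$. By the generating-function computation in the proof of Proposition~\ref{prop4.1} this ratio equals $P(X\geq \alpha(n)+1)$ for $X\sim\mathrm{Bin}(p(n)-1,\tfrac12)$, and \eqref{eq4.1} gives $\alpha(n)=\tfrac12\bigl(p(n)+\mathrm{sc}(n)\bigr)$, so the threshold exceeds the median $\tfrac{p(n)-1}{2}$ by only $\tfrac12(\mathrm{sc}(n)+3)$. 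The number-theoretic input I would use is $\mathrm{sc}(n)=o\bigl(\sqrt{p(n)}\bigr)$: although $\log\mathrm{sc}(n)$ and $\log\sqrt{p(n)}$ are both asymptotic to $\pi\sqrt{n/6}$, the polynomial correction factors give $\mathrm{sc}(n)/\sqrt{p(n)}\asymp n^{-1/4}\to 0$. A Berry--Esseen-type estimate then yields $P(X\geq\alpha(n)+1)\geq\tfrac12-C\,\mathrm{sc}(n)/\sqrt{p(n)}\to\tfrac12$, and combined with the explicit values in Table 2 and a finite verification in the intermediate range this produces the uniform bound $\geq\tfrac1{16}$ for $n\geq 6$.

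\textbf{Upper bound.} I would write $P(\text{fail})\leq P\bigl(a_T((1^n))<0\bigr)+\sum_{\lambda\neq(1^n)}P\bigl(a_T(\lambda)<0\bigr)$. The first term is $\ell b(n)/2^{p(n)-1}$, and the complementation map $S\mapsto S^{c}$ on the free coordinates sends each sign-failing subset injectively to one with sign-multiplicity $\geq\mathrm{sc}(n)+2$, so $\ell b(n)<2^{p(n)-2}$ strictly and indeed $\tfrac12-P\bigl(a_T((1^n))<0\bigr)\asymp\mathrm{sc}(n)/\sqrt{p(n)}$. For every other $\lambda$ the mean dominates the deviation: the lower bounds of Lemma~\ref{lem2.6} together with $f^\lambda$ itself force $\mu_\lambda$ large, while $V_\lambda$ stays controlled, so $\mu_\lambda/\sqrt{V_\lambda}\to\infty$ uniformly; Hoeffding's inequality then makes each $P(a_T(\lambda)<0)\leq\exp\bigl(-\Omega(\mu_\lambda^2/V_\lambda)\bigr)$ super-polynomially small, and a union bound over the $p(n)$ irreducibles keeps $\sum_{\lambda\neq(1^n)}P(a_T(\lambda)<0)$ far below the polynomial gap $\mathrm{sc}(n)/\sqrt{p(n)}$. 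With a finite check for small $n$ this gives $f(n)/2^{p(n)-1}<\tfrac12$, and in fact shows the ratio $\to\tfrac12$.

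\textbf{Main obstacle.} Both asymptotic halves are delicate rather than routine: the upper bound requires a uniform quantitative lower bound on $\mu_\lambda/\sqrt{V_\lambda}$ across all non-sign $\lambda$, i.e.\ sharp control of $\sum_\mu\chi^\lambda(\mu)^2$ and of the ``near-hook'' irreducibles $(n-1,1),(2,1^{n-2}),(2^2,1^{n-4})$ singled out in Propositions~\ref{prop4.2}--\ref{prop4.3}, which needs character estimates beyond what the paper develops. The genuinely hardest part, however, is strict \emph{monotonicity}: there is no apparent combinatorial comparison between the failure families for $n$ and $n+1$, so proving $f(n+1)/2^{p(n+1)-1}>f(n)/2^{p(n)-1}$ would demand two-term asymptotics precise enough to separate consecutive values whose mutual difference shrinks like a negative power of $n$. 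I expect this to be the step that resists a clean argument, and it is presumably why the statement remains a conjecture.
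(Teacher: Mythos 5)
You should first note that this statement is Conjecture~\ref{conj6} of the paper: the author proves nothing here, offering only the computed values of $f(n)$ for $n\le 10$ (Table 2) and the lower bound $\ell b(n)$ of Proposition~\ref{prop4.1}. So there is no proof in the paper to compare against, and your argument must stand alone; it does not, and not only because you concede strict monotonicity. The decisive gap is in your upper bound, in the claim that $\mu_\lambda/\sqrt{V_\lambda}\to\infty$ uniformly over $\lambda\ne(1^n)$. This is false, and it fails exactly at the irreducibles the paper flags in Propositions~\ref{prop4.2}--\ref{prop4.3} and Tables 3a--3b. Take $\lambda=(2,1^{n-2})$: Lemma~\ref{lem2.6} only gives a lower bound of order $do_n\approx \mathrm{sc}(n)$ (the number of self-conjugate partitions), and Proposition~\ref{prop2.4}(4) shows the true multiplicity $\langle\psi_n,s_\lambda\rangle$ is $O(\mathrm{sc}(n)\sqrt n)$; since $\mathrm{sc}(n)\asymp n^{-3/4}e^{\pi\sqrt{n/6}}$, this gives $\mu_\lambda=O\bigl(n^{-1/4}e^{\pi\sqrt{n/6}}\bigr)$. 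Meanwhile $V_\lambda=\tfrac14\sum_{\mu\ne(1^n)}(m_1(\mu)-1)^2\asymp n\,p(n)$, so $\sqrt{V_\lambda}\asymp e^{\pi\sqrt{n/6}}$. Hence $\mu_\lambda/\sqrt{V_\lambda}\asymp n^{-1/4}\to 0$, and by the central limit theorem (Lindeberg is trivial here, as $|\chi^\lambda(\mu)|\le n$ while $\sqrt{V_\lambda}$ is exponential in $\sqrt n$) one gets $P\bigl(a_T(\lambda)<0\bigr)\to\tfrac12$, not something super-polynomially small. The same happens for $(2^2,1^{n-4})$, $(3,1^{n-3})$, and the growing list counted by $g(n)$: these are conjugates of partitions near $(n)$, so their character values are large in modulus but oscillate in sign with $(-1)^{n-\ell(\mu)}$, making the mean exponentially smaller than the standard deviation. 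Your union bound therefore cannot close, even in principle.

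Worse, running your own Gaussian framework correctly indicates that the bound you are trying to prove is asymptotically false. The covariance of the two random multiplicities $a_T((1^n))$ and $a_T((2,1^{n-2}))$ is $\tfrac14\sum_{\mu\ne(1^n)}(m_1(\mu)-1)\approx\frac{\sqrt{6n}}{4\pi}p(n)$, which gives correlation $\rho\to 1/\sqrt2$; a bivariate CLT then yields
\begin{equation*}
\liminf_{n\to\infty}\frac{f(n)}{2^{p(n)-1}}\;\ge\;\frac{3}{4}-\frac{\arcsin(1/\sqrt2)}{2\pi}\;=\;\frac{5}{8}\;>\;\frac{1}{2},
\end{equation*}
which is consistent with the very slow ($\asymp n^{-1/4}$) convergence visible in Table 2, where $f(10)/2^{41}\approx 0.305$. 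So the probabilistic picture suggests the upper bound $\tfrac12$ in Conjecture~\ref{conj6} eventually fails, while Conjecture~\ref{conj7} could still hold with a limit above $\tfrac12$. By contrast, your lower-bound half is essentially sound --- it is a pure binomial tail estimate resting on $\mathrm{sc}(n)=o(\sqrt{p(n)})$, and it can be made effective --- but that is the easy half; the upper bound is broken as explained, and monotonicity is untouched.
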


This would imply an affirmative answer to a question raised by Richard Stanley:
\begin{conjecture}\label{conj7}  The numbers $\frac{f(n)}{2^{p(n)-1}}$ approach a limit strictly between 0 and 1.
\end{conjecture}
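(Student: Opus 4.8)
The plan is to recast the counting problem probabilistically and to isolate the sign representation as the source of a matching lower bound. Fix $(1^n)\in T$ and include each of the remaining $p(n)-1$ power sums independently with probability $\tfrac12$; then $f(n)/2^{p(n)-1}$ is exactly the probability that $\psi_T$ fails to be Schur-positive, i.e.\ that $a_T(\lambda):=\sum_{\mu\in T}\chi^\lambda(\mu)<0$ for some $\lambda\vdash n$. Since $a_T((n))=|T|>0$, the failure event is the union, over the slowly growing family of $g(n)$ dangerous representations (those that can go negative for some $T$; see Tables 3a and 3b), of the events $\{a_T(\lambda)<0\}$. The strategy is to pin down the contribution of the sign exactly and to show the rest contributes a bounded-away-from-$1$ correction.

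For the lower bound I would use the sign alone. Here $a_T((1^n))=\sum_{\mu\in T}(-1)^{n-\ell(\mu)}$, and Proposition~\ref{prop4.1} gives the exact count $\ell b(n)=\sum_{j=\alpha(n)+1}^{p(n)-1}\binom{p(n)-1}{j}$ with $\alpha(n)=\tfrac12\big(p(n)+sc(n)\big)$, where $sc(n)$ is the number of self-conjugate partitions of $n$. Combining Hardy--Ramanujan, $\log p(n)\sim\pi\sqrt{2n/3}$, with the Meinardus asymptotics for partitions into distinct odd parts, $sc(n)=\Theta\!\big(n^{-3/4}e^{\pi\sqrt{n/6}}\big)$, yields $sc(n)/\sqrt{p(n)}\to0$. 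Consequently the threshold $\alpha(n)$ sits $o(1)$ standard deviations above the mean of a $\mathrm{Binomial}(p(n)-1,\tfrac12)$ variable, and the central limit theorem forces $\ell b(n)/2^{p(n)-1}\to\tfrac12$. Since $f(n)\ge \ell b(n)$, this already proves $\liminf_n f(n)/2^{p(n)-1}\ge\tfrac12>0$, which is the lower half of the statement.

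For the upper bound $\limsup_n f(n)/2^{p(n)-1}<1$ I would show a positive fraction of subsets are Schur-positive, i.e.\ $P\big(a_T(\lambda)\ge0\ \text{for all }\lambda\big)\ge c>0$. The bulk representations are harmless: using column orthogonality $\sum_\lambda\chi^\lambda(\mu)^2=z_\mu$ together with $z_\mu\le z_{(2,1^{n-2})}=2(n-2)!$ for $\mu\ne(1^n)$, the fluctuation standard deviation $\tfrac12\big(\sum_{\mu\ne(1^n)}\chi^\lambda(\mu)^2\big)^{1/2}$ is, for typical $\lambda$, exponentially smaller than $f^\lambda=\chi^\lambda((1^n))$, so $\sum_{\lambda\ \mathrm{bulk}}P(a_T(\lambda)<0)=o(1)$. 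It then remains to bound $P\big(\bigcup_{\lambda\ \mathrm{dangerous}}\{a_T(\lambda)<0\}\big)$ away from $1$, and here I would exploit that the dangerous characters, up to the involution $\omega$, concentrate among the hooks and carry the common sign weight $(-1)^{n-\ell(\mu)}$ (for instance $\chi^{(2,1^{n-2})}(\mu)=(-1)^{n-\ell(\mu)}(m_1(\mu)-1)$), which should make their low-coefficient events strongly positively dependent rather than independent.

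The main obstacle is precisely this joint control over a family whose size $g(n)$ tends to infinity. A naive union bound cannot close the gap, because the failure probabilities of the dangerous representations do not manifestly decay: for $\lambda=(2,1^{n-2})$ the mean of $a_T(2,1^{n-2})$ is of order $\langle\psi_n,s_{(2,1^{n-2})}\rangle$, which Proposition~\ref{prop2.4} shows is of order $sc(n)$ up to polynomial factors, while its variance is of order $\sum_\mu(m_1(\mu)-1)^2=\Theta(n\,p(n))$. Because $sc(n)$ and $\sqrt{p(n)}$ share the same exponential order $e^{\pi\sqrt{n/6}}$, the decisive mean-to-deviation ratio is subexponential and its limiting behaviour is governed by polynomial corrections that are delicate to determine; this is exactly the regime where a sharp quantitative multivariate central limit theorem, with Berry--Esseen error control and explicit covariance asymptotics uniform over the $g(n)$ coordinates, appears to be required. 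I expect this to be the genuine difficulty behind the conjecture: once it is resolved one could identify the limiting Gaussian field and the stabilized set of binding constraints, thereby obtaining not only $\limsup<1$ but the existence of the limit and its monotonicity (Conjecture~\ref{conj6}). My recommended order of attack is therefore to first secure the two one-sided bounds $\liminf>0$ and $\limsup<1$, which already confine all limit points to $(0,1)$, before attempting the finer asymptotic analysis.
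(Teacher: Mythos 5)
This statement is Conjecture~\ref{conj7}, which the paper leaves \emph{open}: it is recorded as a question of Stanley, and the paper only observes that Conjecture~\ref{conj6} would imply it. So there is no proof in the paper to compare yours against, and your attempt must stand on its own. Its strongest component is the lower bound. Recasting $f(n)/2^{p(n)-1}$ as the failure probability for a uniformly random subset $T\ni(1^n)$ and invoking Proposition~\ref{prop4.1}, you correctly identify $\ell b(n)/2^{p(n)-1}$ as the tail probability that a $\mathrm{Binomial}(p(n)-1,\tfrac12)$ variable exceeds $\alpha(n)=\tfrac12\bigl(p(n)+sc(n)\bigr)$, a threshold lying roughly $sc(n)/\sqrt{p(n)}$ standard deviations above the mean; since $sc(n)=\Theta\bigl(n^{-3/4}e^{\pi\sqrt{n/6}}\bigr)$ and $\sqrt{p(n)}=\Theta\bigl(n^{-1/2}e^{\pi\sqrt{n/6}}\bigr)$ share the same exponential order, that ratio is $\Theta(n^{-1/4})\to 0$, so $\ell b(n)/2^{p(n)-1}\to\tfrac12$ and hence $\liminf_n f(n)/2^{p(n)-1}\ge\tfrac12$. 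This is a genuinely new and apparently correct observation; note it also shows that if the upper bound of $\tfrac12$ in Conjecture~\ref{conj6} holds, the limit in Conjecture~\ref{conj7} must equal exactly $\tfrac12$.

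Nevertheless the proposal does not prove the statement, and you concede as much. Two concrete gaps. First, the conjecture asserts that a limit \emph{exists}; your program of establishing $\liminf>0$ and $\limsup<1$ can only confine limit points to an interval, and the existence question is deferred to an unperformed ``finer asymptotic analysis,'' so even a complete execution of your plan would not prove the conjecture. Second, the one step of the upper bound that you do assert --- that the bulk representations contribute $o(1)$ using column orthogonality and $z_\mu\le 2(n-2)!$ --- fails quantitatively. Those tools give only $\mathrm{Var}\bigl(a_T(\lambda)\bigr)\le\tfrac14\sum_{\mu\ne(1^n)}z_\mu\approx\tfrac12(n-2)!$, whereas by Vershik--Kerov even the \emph{maximal} dimension satisfies $\max_\lambda f^\lambda=\sqrt{n!}\,e^{-\Theta(\sqrt{n})}$, so that $(f^\lambda)^2/(n-2)!\le n(n-1)e^{-\Theta(\sqrt n)}\to 0$ for every $\lambda$: the crude fluctuation bound is exponentially \emph{larger} than $f^\lambda$, not smaller, and no union bound over the bulk can start from it. Controlling the bulk already requires genuine character estimates (Fomin--Lulov or Larsen--Shalev type bounds on $\sum_{\mu}\chi^\lambda(\mu)^2$ itself), and the joint control of the growing dangerous family $\{(1^n),(2,1^{n-2}),(2^2,1^{n-4}),\dots\}$, which you correctly flag as the heart of the matter, is left entirely open. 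In short: you have a valuable half-result (any limit point is at least $\tfrac12$), but the conjecture remains unproved.
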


%
We close this section with a list of the cases of Schur positivity known to us, for subsets of conjugacy classes.  

The symmetric function $\psi_T$ is Schur-positive for the following subsets $T$ of the set of partitions of $n.$  In nearly all cases one can describe an $S_n$-module whose Frobenius characteristic is given by $\psi_T.$
\begin{enumerate}[label=(\arabic*)]
\item $\{\mu\vdash n\}$ \cite{So}, \cite[Exercise 7.71]{St4EC2}
\item $\{\mu\vdash n: \mu \text{ has all parts odd}\},$   (\cite[Theorem 4.6]{Su1})
\item $\{\mu\vdash n, \mu \text{ has at least }k\text{ parts equal to 1}\},$ for fixed $k\geq 1.$ (\cite[Corollary 4.10]{Su1})
\item $\{\mu\vdash n: \mu \text{ has all parts odd and at least }k \text{ parts equal to }1 \},$ for fixed $k\geq 1.$   (\cite[Corollary 4.10]{Su1})
\item $\{\mu\vdash n:n-\ell(\mu) \text{ is  even}\},$ (\cite[Theorem 4.11]{Su1})
\item $\{\mu\vdash n:n-\ell(\mu) \text{ is even and } \mu \text{ does not have all its parts odd and distinct}\},$ (\cite[Theorem 4.15]{Su1})
\item $\{\mu\vdash n:\mu  \text{ does not have all parts odd and distinct}\}, n\geq 2,$ (\cite[Theorem 4.11]{Su1})
\item $\{\mu\vdash n:\mu_i =1 \text{ or } k\}$, for fixed $k\geq 2,$  
 (\cite[Theorem 4.23]{Su1})
\item $\{\mu\vdash n:\mu_i \text{ divides } k\}$, for fixed $k\geq 2,$  (\cite[Theorem 5.6]{Su1})
\item The set of partitions $\mu$ of $n$ such that $\mu_i$ has only the primes in $S$ in its prime factorisation, and $\mu$ has  an even number of even parts, for a fixed nonempty subset $S$ of primes (\cite[Corollary 3.6]{Su3}).  
Three special cases are:
\begin{enumerate}
\item $\{\mu\vdash n:\mu_i \text{ is a power of 2, and  }n-\ell(\mu)\text{ is even}\}$,  (\cite[Theorem 2.8]{Su3})
\item  $\{\mu\vdash n:\mu_i \text{ is a power of  }q\}$, for a fixed  prime $q,$ (\cite[Corollary 3.6]{Su3})
\item $\{\mu\vdash n:\mu_i \text{ is relatively prime to } q\}$, for a fixed  prime $q$ (\cite[Corollary 3.6]{Su3})
\end{enumerate}
\item $[(1^n), \mu]$ for any $\mu$ in the interval $[(1^n), (3,1^{n-3})]$ in reverse lexicographic order (Theorem~\ref{thm2.11}, this paper)
\item $[(1^n), \mu]$ for any $\mu$ in the interval $[(n-4,1^4), (n)]$ in reverse lexicographic order (Theorem~\ref{thm2.16}, this paper)
\item $\{(n-k, 1^k): 0\leq k\leq n-1\},$ (Proposition~\ref{prop2.18}, this paper)
\item $[(1^n), \mu]$ for  $\mu=(3, 2^k, 1^r), r=0,1,2,$  (Proposition~\ref{prop3.7}, this paper)
\item $\{(1^n), \mu\}$ for any single $\mu \vdash n.$  (The analogous statement holds trivially for arbitrary finite groups, since for any irreducible $\chi,$ the value of the character $\chi(g)$ is a sum of $\chi(1)$ roots of unity, and hence its absolute value cannot exceed the degree $\chi(1).$ )
\end{enumerate}
\section{Additional  tables}

In the tables  below we follow our usual convention of writing simply $\mu$ to signify the Schur function $s_\mu.$ 
\begin{center} \underline{Table 4: Schur function expansion of $\psi_\mu,$ $n\leq 5$} \end{center}
\begin{center}$\psi_1 =(1), \qquad \psi_2=2(2), \qquad \psi_{(1^2)}=(2)+(1^2)$\\
$\psi_3=3(3)+(2,1)+(1^3),\qquad \psi_{(2,1)} =2 (3)+2(2,1)
, \qquad \psi_{(1^3)}=(3)+2(2,1)+(1^3)$
\end{center}
\vskip-.05in
\begin{align*}
\psi_4&=5(4)+2(3,1)+3 (2^2)+2 (2,1^2)+(1^4)\\
\psi_{(3,1)} &=4(4)+3(3,1)+3(2^2)+(2,1^2)+2(1^4)\\
\psi_{(2^2)}&= 3(4)+3(3,1)+4(2^2)+(2,1^2)+(1^4)\\
\psi_{(2,1^2)}&=2(4)+4(3,1)+2(2^2)+2(2,1^2)\\
\psi_{(1^4)}&=(4)+3(3,1)+2 (2^2)+3 (2,1^2)+(1^4)\\
\psi_5&=7(5)+5 (4,1) +6 (3,2) + 5 (3,1^2) + 4 (2^2,1) + 3 (2,1^3) +(1^5) \\
\psi_{(4,1)}&= 6(5)+6 (4,1) +6 (3,2) + 4 (3,1^2) + 4 (2^2,1) + 4 (2,1^3) \\
\psi_{(3,2)}&= 5(5)+6 (4,1) +7 (3,2) + 4 (3,1^2) + 3 (2^2,1) + 4 (2,1^3) +(1^5)\\
%
\psi_{(3,1^2)} &=4(5)+7 (4,1) +6 (3,2) + 4 (3,1^2) + 4 (2^2,1) + 3 (2,1^3) +2(1^5)\\
%
\psi_{(2^2,1)}&=3(5)+6 (4,1) +7 (3,2) + 4 (3,1^2) + 5 (2^2,1) + 2 (2,1^3) +(1^5)\\
\psi_{(2,1^3)}&= 2(5)+6 (4,1) +6 (3,2) + 6 (3,1^2) + 4 (2^2,1) + 2 (2,1^3)\\
\psi_{(1^5)}&=(5)+4(4,1)   +5(3,2)   +6(3,1^2) +5(2^2,1) +4(2,1^3) +(1^5)     
\end{align*}
%
\begin{center} \underline{Table 5: Schur function expansion of $\psi_\mu,$ $n=6$} \end{center} \nopagebreak
\begin{align*}
\psi_{(6)}&=
{\scriptstyle 11 (6) + 8 (5, 1) + 15 (4, 2) + 10 (4, 1^2) + 4 (3^2)
   + 13 (3, 2, 1) + 10 (3, 1^3) + 8 (2^3)
   + 5 (2^2, 1^2) + 4 (2, 1^4) + (1^6)}\\
\psi_{(5,1)}&=
{\scriptstyle 10 (6) + 9 (5, 1) + 15 (4, 2) + 9 (4, 1^2) + 4 (3^2)
   + 13 (3, 2, 1) + 11 (3, 1^3) + 8 (2^3)
   + 5 (2^2, 1^2) + 3 (2, 1^4) + 2 (1^6)}\\ 
\psi_{(4,2)}&=
{\scriptstyle 9 (6) + 9 (5, 1) + 16 (4, 2) + 9 (4, 1^2) + 4 (3^2)
   + 12 (3, 2, 1) + 11 (3, 1^3) + 8 (2^3)
   + 6 (2^2, 1^2) + 3 (2, 1^4) + (1^6)}\\
\psi_{(4,1^2)}&=
{\scriptstyle   8 (6) + 10 (5, 1) + 15 (4, 2) + 9 (4, 1^2) + 5 (3^2)
     + 12 (3, 2, 1) + 11 (3, 1^3) + 9 (2^3)
     + 5 (2^2, 1^2) + 4 (2, 1^4)}\\ 
     \psi_{(3^2)}&=
{\scriptstyle 7 (6) + 9 (5, 1) + 16 (4, 2) + 9 (4, 1^2) + 6 (3^2)
   + 12 (3, 2, 1) + 11 (3, 1^3) + 8 (2^3)
   + 4 (2^2, 1^2) + 5 (2, 1^4) + (1^6)}\\ 
   \psi_{(3,2,1)}&=
{\scriptstyle   6 (6) + 10 (5, 1) + 16 (4, 2) + 8 (4, 1^2) + 4 (3^2)
     + 14 (3, 2, 1) + 10 (3, 1^3) + 6 (2^3)
     + 4 (2^2, 1^2) + 6 (2, 1^4)}\\   
     \psi_{(3,1^3)}&=
{\scriptstyle 5 (6) + 10 (5, 1) + 16 (4, 2) + 9 (4, 1^2) + 3 (3^2)
   + 14 (3, 2, 1) + 9 (3, 1^3) + 7 (2^3)
   + 4 (2^2, 1^2) + 6 (2, 1^4) + (1^6)}\\
   \psi_{(2^3)}&=
{\scriptstyle    4 (6) + 8 (5, 1) + 16 (4, 2) + 8 (4, 1^2) + 4 (3^2)
      + 16 (3, 2, 1) + 8 (3, 1^3) + 8 (2^3)
      + 4 (2^2, 1^2) + 4 (2, 1^4)}\\    
      \psi_{(2^2,1^2)}&=
{\scriptstyle 3 (6) + 9 (5, 1) + 13 (4, 2) + 10 (4, 1^2) + 7 (3^2)
   + 16 (3, 2, 1) + 6 (3, 1^3) + 5 (2^3)
   + 7 (2^2, 1^2) + 3 (2, 1^4) + (1^6) }\\     
     \psi_{(2,1^4)}&=
{\scriptstyle   2 (6) + 8 (5, 1) + 12 (4, 2) + 12 (4, 1^2) + 6 (3^2)
     + 16 (3, 2, 1) + 8 (3, 1^3) + 4 (2^3)
     + 6 (2^2, 1^2) + 2 (2, 1^4) }\\
     \psi_{(1^6)}&={\scriptstyle (6)+5 (5, 1)+9 (4, 2)+10 (4, 1^2 )+5 (3^2)+16 (3, 2, 1)+10 (3, 1^3)+5 (2^3)+9 (2^2, 1^2)+5 (2, 1^4)+(1^6) }                           
\end{align*}

\vskip .2in

\noindent{\bf Acknowledgment:} The author is grateful to Erik Altman for invaluable help in computing the values of $f(n)$ in Table 2, particularly for the calculation of the entry for $n=10.$  The author also thanks the referee for a careful reading of the paper.



\bibliographystyle{amsplain.bst}

\end{document}